\newcommand{\stab}{\mbox{\normalfont Stab}}
\newcommand{\Out}{\mbox{\normalfont Out}}
\newcommand{\Z}{\mathbb{Z}}
\newtheorem{theorem}{Theorem}
\newtheorem{proposition}{Proposition}
\newtheorem{definition}{Definition}
\newtheorem{lemma}{Lemma}
\newtheorem{corollary}{Corollary}
\newtheorem{conjecture}{Conjecture}
\begin{document}

\title{Code loops: automorphisms and  representations}

\author{Alexandre Grichkov and Rosemary M. Pires}

\maketitle
\begin{abstract}
In this work we construct free Moufang loop in the variety generated by code loops. We apply this construction for study the code loops.  
 Moreover, we define and determine all  basic representations of code loops of rank $3$ and $4$.
\end{abstract}

\section{Introduction} 
The study of code loops began with the paper \cite{2} of Griess. Let's recall the definition of code loop. First let ${\bf F}_2^n$ be a $n$-dimensional vector space over a field with two elements ${\bf F}_2=\{0,1\}$. 
For vectors $u=(u_1,\dots,u_n)$ and $v=(v_1,\dots,v_n)$ we define $|v|=|\{i |v_i=1\}|$  and  $|u \cap v|=|\{i |u_i=v_i=1\}|.$

A double even code is a subspace $V\subseteq{{\bf F}_{2}^{n}}$ such that $|v|\equiv{0\;}(mod{\;4})$ and \linebreak $|u \cap v|\equiv{0\;}(mod{\;2})$ for all $u,v \in V$.

Let $V$ be a double even code and $L(V)$ be the set $ \{1,-1\} \times V $. In \cite{2} R.Griess proved that there exists a function $\phi:V\times V\to \{1,-1\}$, called factor set,  such that 

\begin{align}
\phi(v,v)&=(-1)^{\frac{|v|}{4}}, \nonumber \\
\phi(v,w)&=(-1)^{\frac{|v{\cap}w|}{2}}\phi(w,v), \nonumber \\
\phi(0,v)&=\phi(v,0)=1,   \nonumber \\
\phi(v+w,u)&=\phi(v,w+u)\phi(v,w)\phi(w,u)(-1)^{|v{\cap}w{\cap}u|},
\end{align}
where $|v{\cap}w{\cap}u|$ denotes the number of positions in which the coordinates of $u$, $v$ and $w$ are both nonzero.

In order to define code loop, let $v,w \in V$ and $\phi(v,w) \in \{1,-1\}$, and let $(1,v)$ denotes $v$ and $(-1,v)$ denotes $-v$. We define a product  $``\cdot"$ em $L(V)$ by:
\begin{align}
&v.w = \phi(v,w)(v + w), \nonumber\\
&v.(-w) = (-v).w = -(v.w), \nonumber\\
&(-v).(-w) = v.w. 
\end{align} 

With the product defined above, Griess (1986, \cite{2}) proved that $(L(V),.)$, or merely $L(V)$, has a Moufang loop structure.
The Moufang loop $L(V)$ is called code loop. We say that $L(V)$ has rank $m$, if dim$_{{\bf F}_2}V = m$. 

Moreover, Chein and Goodaire (1990, \cite{4}) proved that  code loops have a unique nonidentity square, a unique nonidentity commutador, and a unique nonidentity associator. In other words, for any $u,v,w \in V$: 
\begin{align}
v^2 &= (-1)^{\frac{|v|}{4}}0,\nonumber \\
\left[u,v\right] &= u^{-1}v^{-1}uv = (-1)^{\frac{|u \cap v|}{2}}0, \nonumber\\
(u,v,w)&=((uv)w)( (u(vw))^{-1})=(-1)^{|u\cap v\cap w|}0. 
\end{align}

On the other hand, a Moufang loop $L$ is called \textbf{$E$-loop} if  there is a central subloop $Z$ with $2$ elements such that $L/Z \in {\cal A}$, where ${\cal A}$ is the variety of groups with identity $x^2 = 1.$ 

Chein e Goodaire (1990, \cite{4}) proved  that finite code loops may be characterized as \linebreak Moufang loops $L$ for which $|L^{2}|\leq 2$. So, directly from the proof of this result we can obtain that a finite Moufang loop $L$ is a code loop if and only if $L$ is a $E$-loop. 

In this paper, we study the representations of code loops. In Section 2, we prove that code loops of rank $n$ can be characterized as a homomorphic image of a certain free Moufang loops with $n$ generators and we introduce the concept of characteristic vectors associated with code loops. In Section 3, we present the classification of code loops of rank $3$ and $4$ and their corresponding groups of outer automorphisms. There are exactly 5 nonassociative code loops of rank $3$ (up to isomorphism) and 16 nonassociative code loops of rank $4$ (up to isomorphism). By using this, in Section 4, we determine all basic representations of code loops of rank $3$ and $4$.

\newpage
\section{Free Loops of the Variety $\cal{E}$}

In the following we introduce the variety of Moufang loops ${\cal E}$, generated by all code loops.

\begin{definition}
Let \; ${\cal E}$ be the variety of Moufang loops with the following identities:
\begin{align}
&x^4 = 1, \,\left[x,y \right]^2 =1 , \, (x,y,z)^2 = 1, \nonumber \\
&\left[x^2,y \right] = 1,\,    \left[\left[x,y\right],t \right] = 1,\, \left[(x,y,z),t \right] = 1, \nonumber \\
&(x^2,y,z) = 1 ,  \,  (\left[x,y \right],z,t) = 1, \;\; ((x,y,z),t,s)=1. \label{eq1}
\end{align}

\end{definition}

We observe that code loops are contained in ${\cal E}$. In fact, a code loop is a \linebreak Moufang loop with a Unique Nonidentity Commutator (Associator, Square). Besides, Chein and Goodaire (\cite{4},Theorem 1) proved that if $L$ is a Moufang loop with a unique nonidentity square $e$, then $e^{2}=1$ and either $L$ is an abelian group or else $[L,L]=(L,L,L)=L^{2}=\{1,e\}\subseteq {\cal Z}(L)$. Here, $[L,L]$, $(L,L,L)$, and $L^{2}$ denotes, respectively, the sets of all commutators $[x,y]$, all associators $(x,y,z)$ and all squares $x^2$, $x,y,z \in  L$. Besides, the center of $L$,  ${\cal Z}(L)$, is the set of elements of $L$ which associate with every pair of elements of $L$ and which commute with every element of $L$.

Note that those identities are not independents. It is interesting problem to find the minimal set of identities that defines the variety ${\cal E}$ (see the Conjecture below).

\begin{conjecture}\label{cn1} The variety ${\cal E}$ has the following minimal set of identities:
$$x^4 = 1, \left[x,y \right]^2 =1 ,
\left[x^2,y \right] = 1,(x^2,y,z) = 1.$$   
\end{conjecture}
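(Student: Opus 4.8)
The plan is to prove the substantive half of the conjecture, namely that the four displayed identities, imposed on a Moufang loop, already force the remaining five identities in the definition of $\mathcal{E}$ (the converse being trivial, since the four are among the nine). Throughout I work inside the variety of Moufang loops, so I may freely use diassociativity, the inverse property $(ab)^{-1}=b^{-1}a^{-1}$, and the classical fact that in a Moufang loop the left, middle and right nuclei coincide in a single normal subloop $N$, with centre $\mathcal{Z}(L)=N\cap\{a : ax=xa \text{ for all } x\in L\}$.

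First I would show that every square is central. The identity $(x^2,y,z)=1$ says that each $x^2$ lies in the left nucleus, hence in $N$ by the coincidence of nuclei, while $[x^2,y]=1$ says $x^2$ commutes with every element; together these give $x^2\in\mathcal{Z}(L)$. Consequently the subloop $C$ generated by all squares is central, hence normal, and, being generated by elements of order dividing $2$ (from $x^4=1$) inside the abelian group $\mathcal{Z}(L)$, it is an elementary abelian $2$-group.

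Next I would pass to the quotient $\bar L=L/C$. By construction $\bar x^{\,2}=\overline{x^2}=1$, so $\bar L$ is a Moufang loop of exponent $2$; from $(\bar x\bar y)^2=1$ and the inverse property one gets $\bar x\bar y=(\bar x\bar y)^{-1}=\bar y^{-1}\bar x^{-1}=\bar y\bar x$, so $\bar L$ is commutative. Now I would invoke the structure theory of commutative Moufang loops: every associator there has order dividing $3$. Since every element of $\bar L$ also has order dividing $2$, each associator of $\bar L$ has order dividing $\gcd(2,3)=1$ and is trivial, so $\bar L$ is associative, i.e. an elementary abelian $2$-group. Pulling back through the quotient map yields $[x,y]\in C$ and $(x,y,z)\in C$ for all $x,y,z\in L$.

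The remaining five identities now read off from the fact that $C$ is central of exponent $2$: centrality gives $[[x,y],t]=1$, $[(x,y,z),t]=1$, $([x,y],z,t)=1$ and $((x,y,z),t,s)=1$ at once, while exponent $2$ gives $(x,y,z)^2=1$ (and $[x,y]^2=1$ was assumed). The step I expect to be the genuine obstacle is the \emph{minimality} assertion: to show the set is irredundant I would need, for each of the four identities, an explicit Moufang loop satisfying the other three while violating it, and producing such witnesses — especially one keeping $x^4=1$, $[x,y]^2=1$ and $[x^2,y]=1$ but breaking $(x^2,y,z)=1$ — is where the real work lies. A secondary concern is whether the appeal to commutative-Moufang-loop theory can be replaced by a purely equational derivation internal to the variety, which would make the argument self-contained.
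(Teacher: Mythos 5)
First, a point of comparison that matters here: the paper contains \emph{no} proof of this statement --- it is posed explicitly as an open conjecture, so there is no internal argument to measure you against, and your proposal is an attempt at the open problem itself. The definability half of your argument is correct as far as it goes: $(x^2,y,z)=1$ puts every square in the left nucleus, hence in the nucleus $N$ (the three nuclei of a Moufang loop coincide), $[x^2,y]=1$ then forces $x^2\in {\cal Z}(L)$, the subloop $C$ generated by squares is central and elementary abelian by $x^4=1$, and $L/C$ is a Moufang loop of exponent $2$, hence commutative, hence an abelian group. Your ``secondary concern'' is also easily met: you do not need Bruck's exponent-$3$ theorem for associators in commutative Moufang loops, since in a commutative Moufang loop the identity $(xy)(zx)=(x(yz))x$ together with flexibility and left alternativity gives $(xy)(xz)=x^2(yz)$; under exponent $2$ this reads $(xy)(xz)=yz$, replacing $y$ by $xy$ yields $(xy)z=y(xz)$, and interchanging $x$ with $y$ and invoking commutativity gives associativity. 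So the reduction is sound and can be made entirely self-contained.

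The genuine defect is that your argument proves strictly more than you claim, and in doing so undermines the very statement you are defending: nowhere in the derivation do you use $[x,y]^2=1$. Squares are central by $[x^2,y]=1$ and $(x^2,y,z)=1$ alone, $C$ has exponent $2$ by $x^4=1$ alone, and once you know $[x,y]\in C$ you obtain $[x,y]^2=1$ for free --- your parenthetical ``(and $[x,y]^2=1$ was assumed)'' overlooks this. Hence your own proof shows that the three identities $x^4=1$, $[x^2,y]=1$, $(x^2,y,z)=1$ already define ${\cal E}$ within Moufang loops, so the conjectured four-identity set is redundant and cannot be minimal. The minimality assertion is therefore not, as you say, ``the genuine obstacle'' awaiting witness loops: for the identity $[x,y]^2=1$ the witness you propose to construct (a Moufang loop satisfying the other three but violating it) provably does not exist, by your own argument. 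What your approach leaves genuinely open is the irredundancy of the \emph{three}-element set: $\Z_{8}$ shows $x^4=1$ does not follow from the other two (an abelian group satisfies both trivially), but you would still need Moufang loops separating $[x^2,y]=1$ and $(x^2,y,z)=1$ from their companions before any corrected minimality claim can stand.
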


Let $V$ be a ${\bf F}_2$-space with a basis $\left\{v_{1},v_{2},\dots\right\}$. We identify each element of $V$ with a finite subset of the set of natural numbers $\mathbb{N}$ as follows: We consider the biunivocal correspondence  $\sigma: V \longrightarrow {\cal P}(\mathbb{N})$, where $$v=a_{1}v_{1}+a_{2}v_{2}+\cdots \in V \longmapsto \sigma(v)=\sigma =\{i| a_{i}=~1\}\in {\cal P}(\mathbb{N}).$$

 In this way, we use the notation $\sigma \in V$ when $\sigma = \sigma (v)$, for $v \in V$. Besides, we observe that,  $\sigma(v+w)=\sigma(v)\Delta \sigma(w)$,\; where $\sigma \Delta \mu =(\sigma \setminus \mu)\cup(\mu \setminus \sigma)$.

 We consider $W=V{\wedge}V$ and $U=V{\wedge}V{\wedge}V$ antisymmetric products of $V$. This means that $W$ has a basis $\{ i\wedge j \; | \; i\wedge j = j\wedge i, i\wedge i =0; \; i,j \in \mathbb{N} \}$ and $U$ has a basis  $\{ i\wedge j\wedge k \; |\; i\wedge j\wedge k = j\wedge k\wedge i = j\wedge i\wedge k; \;i,j,k \in \mathbb{N} \}$. Moreover, $i\wedge j\wedge k = 0$ if and only if $|\{i,j,k\}| < 3$. Let $\bar{V}$ be  an isomorphic copy of $V$ with a basis $\left\{\bar{i}|i \in \mathbb{N}\right\}.$ 
 
 Let the set ${\cal F} = V.{\bar{V}}. W . U$. We want to define a product ${\cal F}{\cal F}\subseteq {\cal F}$ such that for $i,j,k \in \mathbb{N}$ we have
\begin{eqnarray}
i^{2}=\bar{i}, \;\; [i,j]=i^{-1}j^{-1}ij=i\wedge j, \;\;(i,j,k) =(ij.k)(i.jk)^{-1}= i\wedge j\wedge k . \label{eq2}
\end{eqnarray}

For the following, given an element $\sigma = \{i_{1},i_{2},\dots,i_{s}\} \in V$, we identify it in $\cal F$ with a product $\sigma = (\dots(i_{1}i_{2})i_{3})\dots)i_{s}$. Now we define
\begin{gather}
\sigma . \mu = \sigma \Delta \mu \;. \displaystyle\prod_{i \in {\sigma{\cap}\mu}}{\bar{i}} \;. \displaystyle\prod_{i \in \sigma, j \in \mu, i > j }{i\wedge j} \;. \displaystyle\prod_{i \in \sigma, j,k \in \mu, j < k }{i\wedge j\wedge k}, \;\sigma,\mu\in V.\label{eq3} \\
x^{2}=[x,y]=(x,y,z)=1, \;\;\;x\in {\bar{V}}.W.U, \;\;y,z \in \cal F. \label{eq4}
\end{gather}

By definition ${\cal Z (F)}={\bar{V}}.W.U$. Now let $v,w \in {\cal F}$ be of the form $v=v_{0}z_{0}$ and $w=w_{0}z_{1}$, where $v_{0},w_{0} \in V$ and $z_{0}, z_{1} \in {\cal Z (F)}$. Therefore, the product  of $v$ by $w$ is given by: $v.w=(v_{0}.w_{0}).z_{0}.z_{1}$.

For the following proofs we use the notation more concise, for all $\sigma, \mu, \lambda \in V$:
\begin{align}
\left[\sigma,\mu,\lambda\right]&=\displaystyle\prod_{i \in \sigma, j \in \mu, k \in \lambda , j < k }{i\wedge j\wedge k} \label{eq5}\\
\left[\sigma,\mu\right]&=\displaystyle\prod_{i \in \sigma, j \in \mu, i > j }{i\wedge j} \label{eq6}\\
\{\sigma,\mu\}&=\left\{\begin{array}{c}
\displaystyle\prod_{i \in \sigma \cap \mu }{\bar{i}}, \;\;\;\mbox{se} \;\;\;\sigma \cap \mu \neq \emptyset\\
1, \;\;\;\mbox{se} \;\;\;\;\;\;\sigma \cap \mu = \emptyset \label{eq7}
\end{array}\right.
\end{align}

\begin{lemma}\label{lemma2.3} With the above definitions, in $\cal F$ are valid the following equalities, for all $\sigma, \mu, \lambda \in V$:
\begin{gather}
\{\sigma,\mu\}\{\sigma,\lambda\}\{\sigma \Delta \mu,\lambda \Delta \sigma\}=\{\lambda,\mu\}\{\sigma,\lambda \Delta \mu\}\{\sigma \Delta \mu \Delta \lambda,\sigma\}\label{eq8}\\
\left[\sigma,\mu\right][\lambda,\sigma][\sigma \Delta \mu,\lambda \Delta \sigma]=[\mu,\lambda][\sigma,\lambda \Delta \mu][\sigma \Delta \mu \Delta \lambda,\sigma]\label{eq9}\\
\left[\lambda \Delta \tau, \mu, \sigma\right]=[\lambda, \mu, \sigma][\tau, \mu, \sigma] \nonumber\\
\left[\sigma, \lambda \Delta \tau, \mu\right]=[\sigma, \lambda, \mu][\sigma, \tau, \mu] \nonumber\\
\left[\sigma, \mu, \lambda \Delta \tau\right]=[\sigma, \mu, \lambda][\sigma, \mu, \tau] \label{eq10}\\
\left[\sigma, \lambda, \sigma\right][\sigma, \sigma, \lambda]=1 \label{eq11}\\
\left[\sigma, \lambda, \mu\right][\mu, \lambda,\sigma][\sigma,\mu, \lambda][\mu, \sigma, \lambda]=1 \label{eq12}\\
\left[\sigma \Delta \mu,\lambda \Delta \sigma, \lambda \Delta \sigma\right][\sigma,\mu,\mu][\lambda, \sigma, \sigma][\sigma \Delta \mu \Delta \lambda, \sigma, \sigma][\sigma, \mu \Delta \lambda, \mu \Delta \lambda][\mu,\lambda, \lambda]=1 \label{eq13}
\end{gather}

\end{lemma}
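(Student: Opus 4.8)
The plan is to reduce every equality to a parity count in the abelian group $\mathcal{Z}(\mathcal{F})=\bar{V}.W.U$. By \eqref{eq4} each element of $\bar{V}.W.U$ is central and squares to $1$, so $\mathcal{Z}(\mathcal{F})$ is an elementary abelian $2$-group, and by construction it is the internal direct product of the independent summands $\bar{V}$, $W$ and $U$. Each asserted relation is homogeneous for this decomposition: \eqref{eq8} is a product of factors $\{\cdot,\cdot\}\in\bar{V}$, \eqref{eq9} a product of factors $[\cdot,\cdot]\in W$, and \eqref{eq10}--\eqref{eq13} products of factors $[\cdot,\cdot,\cdot]\in U$. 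Since $\bar{V}$, $W$, $U$ carry the distinguished bases $\{\bar{i}\}$, $\{i\wedge j:i>j\}$ and $\{i\wedge j\wedge k\}$, a relation holds in $\mathcal{Z}(\mathcal{F})$ exactly when, for every basis element, the total exponent agrees modulo $2$ on the two sides. Thus it suffices to fix a single basis element and compare exponents.

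The second ingredient is the $\mathbb{F}_2$-linearity of membership in a symmetric difference: for a fixed index $i$, the indicator of $i\in\sigma\Delta\mu$ is the sum in $\mathbb{F}_2$ of the indicators of $i\in\sigma$ and $i\in\mu$. Reading the exponents off \eqref{eq5}--\eqref{eq7} then makes two things transparent. First, the three relations \eqref{eq10} are immediate, since the exponent of $i\wedge j\wedge k$ in $[\sigma,\mu,\lambda]$ is a sum, over the admissible index assignments, of products of single-set indicators, hence is additive in each argument separately. Second, the ordering constraints prune the counts: for $W$, if $p>q$ then only the assignment $i=p,\ j=q$ survives, so the exponent of $p\wedge q$ in $[A,B]$ is $\alpha_p\beta_q$, with $\alpha,\beta$ the indicators for $A,B$; and for $U$, writing $p<q<r$, the constraint $j<k$ leaves exactly three admissible assignments, so the exponent of $p\wedge q\wedge r$ in $[A,B,C]$ equals
\[
\alpha_p\beta_q\gamma_r+\alpha_q\beta_p\gamma_r+\alpha_r\beta_p\gamma_q ,
\]
where $\alpha_t,\beta_t,\gamma_t\in\mathbb{F}_2$ indicate $t\in A,\ t\in B,\ t\in C$. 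With these formulas \eqref{eq8} and \eqref{eq9} each collapse to a small identity in $\mathbb{F}_2$; for \eqref{eq8}, writing $a,b,c$ for the indicators of $i\in\sigma,\mu,\lambda$, both sides give the exponent $a+bc$ at $\bar{i}$, and \eqref{eq9} is handled the same way in the six indicators attached to a pair $p>q$.

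For \eqref{eq11} and \eqref{eq12} I would apply the $U$-formula to each factor and sum. In \eqref{eq11} the repeated argument makes two of the three admissible assignments in each factor coincide and cancel, so both factors reduce to the same single monomial and their product is $1$. In \eqref{eq12} the four factors produce, at $p\wedge q\wedge r$, exactly the six monomials formed by one indicator from each of $\sigma,\mu,\lambda$ over the three distinct indices, and each monomial occurs twice; hence the exponent vanishes mod $2$ and the product is $1$.

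The genuinely laborious case is \eqref{eq13}, and this is where the main effort lies. Each of its six factors has equal second and third arguments, and several arguments are symmetric differences. Using the trilinearity \eqref{eq10} already established, I would expand every $\Delta$ in the arguments---equivalently, substitute the $\mathbb{F}_2$-sum expressions for the relevant indicators directly into the $U$-formula---thereby turning the whole left-hand side into one polynomial in the nine indicators attached to $p<q<r$ for $\sigma,\mu,\lambda$. The final step is purely mechanical: collect monomials and check that each appears an even number of times, so the polynomial vanishes and the product equals $1$. The obstacle is solely the size of this expansion, since the repeated arguments together with the nested symmetric differences generate many monomials; but the computation is entirely routine, requiring no idea beyond the exponent comparison and the trilinearity \eqref{eq10}.
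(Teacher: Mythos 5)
Your proposal is correct: the reduction to parity of exponents of basis elements in the elementary abelian $2$-group $\bar{V}.W.U$ is sound, your closed-form exponent formulas are right (for $p>q$ the exponent of $p\wedge q$ in $[A,B]$ is $\alpha_p\beta_q$, and for $p<q<r$ the exponent of $p\wedge q\wedge r$ in $[A,B,C]$ is $\alpha_p\beta_q\gamma_r+\alpha_q\beta_p\gamma_r+\alpha_r\beta_p\gamma_q$), and I verified that your expansions for \eqref{eq8}--\eqref{eq12} and the full expansion you sketch for \eqref{eq13} do close, every monomial occurring an even number of times. The paper pursues the same underlying idea (parity counting in the center) but organizes it differently: it decomposes $\sigma,\mu,\lambda$ into the seven Venn regions $\xi_1,\dots,\xi_7$ (encoding $\sigma=(1234)$, $\mu=(1256)$, $\lambda=(1357)$), first proves \eqref{eq11} and \eqref{eq12} under the extra hypothesis that the sets are pairwise disjoint by a case analysis on the orderings $i<j<k$, $i<k<j$, $j<i<k$, and then transports to the general case via multiplicativity over disjoint unions; your pointwise indicator formulation bakes the ordering constraint into the three-monomial formula once and for all, so \eqref{eq8}--\eqref{eq12} become uniform checks with no disjointness reduction or ordering cases --- a genuine streamlining, especially for \eqref{eq11}. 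The one place you diverge to your disadvantage is \eqref{eq13}: the paper obtains it directly from \eqref{eq10}, \eqref{eq11} and \eqref{eq12}, and your own tools already suffice for that shortcut. Expanding the three $\Delta$-arguments by the trilinearity \eqref{eq10} gives fourteen factors; the pair $[\sigma,\lambda,\sigma][\sigma,\sigma,\lambda]$ dies by \eqref{eq11}, the factors $[\sigma,\sigma,\sigma]$, $[\mu,\sigma,\sigma]$, $[\lambda,\sigma,\sigma]$, $[\sigma,\mu,\mu]$, $[\sigma,\lambda,\lambda]$, $[\mu,\lambda,\lambda]$ each occur twice and cancel since all factors are commuting involutions, and what survives is exactly $[\sigma,\lambda,\mu][\mu,\lambda,\sigma][\sigma,\mu,\lambda][\mu,\sigma,\lambda]=1$, i.e.\ \eqref{eq12}. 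So the ``genuinely laborious'' nine-variable polynomial expansion you anticipate, while valid (I checked it), is unnecessary --- three lines of cancellation finish \eqref{eq13}.
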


\begin{proof}
We denote
\begin{align}
\xi_{1}&=\sigma \cap \lambda \cap \mu , \;\;\;\xi_{2}=(\sigma \cap \mu)\setminus \lambda , \;\;\xi_{3}=(\sigma \cap \lambda)\setminus \mu , \;\;\xi_{4}=\sigma \setminus (\mu \cup \lambda),\nonumber\\
\xi_{5}&=(\lambda \cap \mu)\setminus \sigma, \;\;\xi_{6}=\mu \setminus (\sigma \cup \lambda), \;\;\xi_{7}=\lambda \setminus (\sigma \cup \mu).\nonumber
\end{align}
We write $i, ij=(ij), ijk=(ijk), \left\{i,j\right\}, \left\{ij,k\right\}, [i,j], [ij,k], [i,j,k]$ and $[ip,j,k]$\linebreak instead $\xi_{i},\; \xi_{i}\cup \xi_{j}, \xi_{i}\cup \xi_{j}\cup \xi_{k}, \left\{\xi_{i},\xi_{j}\right\}, \left\{\xi_{i} \cup \xi_{j}, \xi_{k}\right\}, [\xi_{i},\xi_{j}], [\xi_{i} \cup \xi_{j}, \xi_{k}], [\xi_{i}, \xi_{j}, \xi_{k}] $ and $[\xi_{i} \cup \xi_{p},\xi_{j}, \xi_{k}]$, respectively. 

To prove the equality (\ref{eq8}), first we see that 
\begin{equation}
\{\sigma \cup \mu, \lambda\} = \{\sigma,\lambda\}\{\mu,\lambda\},\;\;\mbox{for all} \;\sigma, \mu, \lambda \in V. \label{eq14}
\end{equation}

Thus, by the equality (\ref{eq14}) and by the fact $ \{\sigma,\mu\}=1 \;\mbox{if}\; \sigma \cap \mu = \emptyset,$ for all $\sigma, \mu \in V$, we can rewrite the right part of (\ref{eq8}) as follows: $$\{1,1\}\{5,5\}\{3,3\}\{2,2\}\{4,4\}\{1,1\}=\{5,5\}\{3,3\}\{2,2\}\{4,4\}.$$

In fact:
\begin{enumerate}
	\item \label{item1} $\{\sigma \Delta \mu \Delta \lambda,\sigma\}= \{\sigma \setminus (\mu \Delta \lambda), \sigma\}\{(\mu \Delta \lambda)\setminus \sigma, \sigma\} 
                                           =\{\xi_{4},\sigma\}\{\xi_{1},\sigma\}\{\xi_{6},\sigma\}\{\xi_{7},\sigma\}. $ 
																					
We note that $\sigma \cap \xi_{4}=\xi_{4}$, then $\{\xi_{4},\sigma\}=\{\xi_{4},\xi_{4}\}=\{4,4\}$. Analogously, $\{\xi_{1},\sigma\}=\{1,1\}$. We also note that $\xi_{6} \cap \sigma = \emptyset$ and $\xi_{7} \cap \sigma = \emptyset$, so we have $\{\xi_{6},\sigma\}=\{\xi_{7},\sigma\}=1.$ Therefore, $\{\sigma \Delta \mu \Delta \lambda,\sigma\}=\{4,4\}\{1,1\}.$

\item \label{item2} $\{\sigma, \lambda \Delta \mu\} = \{\sigma\cap (\lambda \setminus \mu), \sigma\cap (\lambda \setminus \mu)\}\{\sigma \cap (\mu \setminus \lambda),\sigma \cap (\mu \setminus \lambda)\}=\{3,3\}\{2,2\}.$
															
\item \label{item3} Since  $\lambda \cap \mu = (\lambda \cap \mu \cap \sigma)\cup((\lambda \cap \mu)\setminus \sigma)$, then
$\{\lambda, \mu\}=\{\xi_{1}, \mu\}\{\xi_{5}, \mu\}=\{1,1\}\{5,5\}.$
\end{enumerate}
Thus, by (\ref{item1}),(\ref{item2}) and (\ref{item3}), we have $$\{\lambda,\mu\}\{\sigma,\lambda \Delta \mu\}\{\sigma \Delta \mu \Delta \lambda,\sigma\}=\{5,5\}\{3,3\}\{2,2\}\{4,4\}.$$
 
Analogously, we prove that the left part of (\ref{eq8}) can be rewritten in the same way.
\begin{figure}[!hh]
\centering
\includegraphics[scale=0.3]{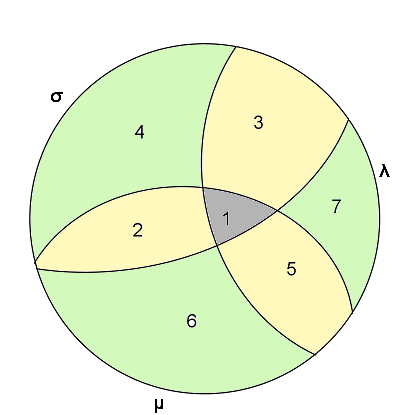}
\caption{\small{Representative diagram of $\sigma$, $\lambda$ and $\mu$}}
\label{diagram}
\end{figure}

Now we are going to prove the equality (\ref{eq9}). We have $[\sigma \cup \mu, \lambda]=[\sigma,\lambda][\mu,\lambda]$, if \linebreak $\sigma \cap \mu =\emptyset$. By the Diagram (Fig.\ref{diagram}), we can rewrite $\sigma, \lambda$ and $\mu$ as follows: $\sigma = (1234)$, $\lambda =(1357)$ and $\mu=(1256)$. So the right and left parts of (\ref{eq9}) can be rewritten as a product of $[i,j]$, $i,j=1,\dots,7$. As $[i,j]^{2}=1$, the equality (\ref{eq9}) is valid.

To prove the firt relation in (\ref{eq10}), we note that $\left[\lambda \cup \tau, \mu, \sigma\right]=\left[\lambda,\mu, \sigma\right]\left[\tau, \mu, \sigma\right],$ if $\lambda \cap \tau =\emptyset .$ Therefore,
 \begin{eqnarray*}[\lambda \Delta \tau, \mu, \sigma]=&
 [\lambda \setminus \tau, \mu, \sigma][\lambda \cap \tau, \mu, \sigma][\lambda \cap \tau, \mu, \sigma][\tau \setminus \lambda, \mu, \sigma] = [\lambda, \mu, \sigma][\tau, \mu, \sigma].
 \end{eqnarray*}
 
Analogously, we prove $\left[\sigma, \lambda \Delta \tau, \mu\right]=[\sigma, \lambda, \mu][\sigma, \tau, \mu]$ and $\left[\sigma, \mu, \lambda \Delta \tau\right]=[\sigma, \mu, \lambda][\sigma, \mu, \tau] \nonumber$.

We are going to prove (\ref{eq11}) and (\ref{eq12}) in the cases $\sigma \cap \mu = \sigma \cap \lambda = \mu \cap \lambda = \emptyset$. First we consider $i,k \in \sigma$ and $j \in \lambda$. Suppose $i < k$. We have to analyze  three cases: $1)\; i < j < k$, $2)\; i < k < j$ and $3)\; j < i < k$. In the first case, the product $i\wedge j\wedge k$ appears twice in $[\sigma,\lambda,\sigma][\sigma,\sigma,\lambda]$, as a factor of $[\sigma,\lambda,\sigma]$  and as a factor of $[\sigma,\sigma,\lambda]$. In the second case we have a factor of the form $i\wedge k \wedge j . k\wedge i \wedge j$ in $[\sigma,\sigma,\lambda]$ and there are not nontrivial factors in $[\sigma,\lambda,\sigma]$ with $i,j,k$, while in the third case we have the factor $k\wedge j \wedge i . i\wedge j \wedge k$ in $[\sigma,\lambda,\sigma]$ and there are not nontrivial  factors in $[\sigma,\sigma,\lambda]$, with $i,j,k$. Those cases where $k < i$ are analogous. Therefore, the relation (\ref{eq11}) is valid in the particular case $\sigma \cap \lambda = \emptyset$.

Now let $i,j,k$ be elements of $\sigma, \mu$ and $\lambda$ respectively. We consider the same three cases to prove the relation (\ref{eq12}) in the particular case. In any case we will have the factor $i \wedge j \wedge k$ exactly twice in $\left[\sigma, \lambda, \mu\right][\mu, \lambda,\sigma][\sigma,\mu, \lambda][\mu, \sigma, \lambda]$. For example, in the first case there are not factors with $i,j,k$ in $[\sigma,\lambda,\mu]$ and $[\mu,\lambda,\sigma]$, and there is one factor $i\wedge j\wedge k$ in $[\sigma, \mu, \lambda]$ and one factor $j\wedge i \wedge k$ in $[\mu, \sigma, \lambda]$.

To prove the relation (\ref{eq11}) in general case, we consider again $\sigma=(1234)$ and $\lambda=(1357)$. The term $[\sigma,\lambda,\sigma]=[1324,1357,1324]$ can be rewritten as $$[13,13,13][13,13,24][13,57,13][13,57,24][24,13,13][24,13,24][24,57,13][24,57,24].$$
In the same way the term $[\sigma,\sigma,\lambda]=[1324,1324,1357]$ can be rewritten as 
$$[13,13,13][13,13,57][13,24,13][13,24,57][24,13,13][24,13,57][24,24,13][24,24,57].$$
By definition, $13 \cap 57 = 13 \cap 24 = 24 \cap 57 = \emptyset$. So we obtain the relation (\ref{eq11}) in the general case.

Next, by the relation (\ref{eq11}) we obtain that $[\sigma \Delta \mu, \lambda, \sigma \Delta \mu][\sigma \Delta \mu, \sigma \Delta \mu, \lambda]=1$ and by the relation (\ref{eq10}) we prove the relation (\ref{eq12}) in the general case. Finally, the relation (\ref{eq13}) is obtained directly from relations (\ref{eq10}), (\ref{eq11}) and (\ref{eq12}).

\end{proof}

\begin{theorem}
The set $\cal F$ defined above is a Moufang loop in the variety $\cal E$.
\end{theorem}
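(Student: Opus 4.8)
The plan is to exploit the fact that, as a set, $\mathcal{F}=V\times Z$ where $Z:=\bar V\cdot W\cdot U=\mathcal{Z}(\mathcal{F})$ is an elementary abelian $2$-group, declared central by (\ref{eq4}). Writing a general element as a pair $(\sigma,z)$ with $\sigma\in V$ and $z\in Z$, the definition (\ref{eq3}) together with the prescription $v\cdot w=(v_0\cdot w_0)z_0z_1$ says exactly that $\mathcal{F}$ is the central extension of $(V,\Delta)$ by $Z$ with factor set
\[
f(\sigma,\mu)=\{\sigma,\mu\}\,[\sigma,\mu]\,[\sigma,\mu,\mu]\in Z,
\qquad (\sigma,z_0)(\mu,z_1)=(\sigma\Delta\mu,\ z_0z_1\,f(\sigma,\mu)).
\]
From this form the loop axioms are immediate: $(\emptyset,1)$ is a two-sided identity since $f(\emptyset,\mu)=f(\mu,\emptyset)=1$, and left and right division are uniquely solvable because $(V,\Delta)$ is a group and $Z$ is an abelian group acting centrally. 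So the only real content is the Moufang identity and the nine identities defining $\mathcal{E}$.

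For the Moufang law I would verify $(xy)(zx)=(x(yz))x$. Writing $x=(\sigma,p),\ y=(\mu,q),\ z=(\lambda,r)$ and expanding both sides, the $V$-components of the two sides agree automatically (both equal $\mu\Delta\lambda$, using $\sigma\Delta\sigma=\emptyset$); in $Z$ the factors $q,r$ occur identically on both sides while $p$ occurs squared, so cancelling these reduces the Moufang identity to the single cocycle relation
\[
f(\sigma,\mu)\,f(\lambda,\sigma)\,f(\sigma\Delta\mu,\lambda\Delta\sigma)=f(\mu,\lambda)\,f(\sigma,\mu\Delta\lambda)\,f(\sigma\Delta\mu\Delta\lambda,\sigma).
\]
Because $Z=\bar V\oplus W\oplus U$ is a direct sum, this relation splits into three independent identities, one in each summand. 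Substituting $f=\{\cdot,\cdot\}[\cdot,\cdot][\cdot,\cdot,\cdot]$ and using that $\{\cdot,\cdot\}$ is symmetric, the $\bar V$-component is precisely (\ref{eq8}), the $W$-component is precisely (\ref{eq9}), and the $U$-component is precisely (\ref{eq13}) (rewritten with all factors moved to one side, which is legitimate since every element of $Z$ has order $2$). Thus Lemma~\ref{lemma2.3} delivers the Moufang identity.

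Finally, for the identities of $\mathcal{E}$ I would first observe that $x^2$, $[x,y]$ and $(x,y,z)$ all lie in $Z$: the square has $V$-component $\sigma\Delta\sigma=\emptyset$, and since $(V,\Delta)$ is an abelian group the commutator and associator also have trivial $V$-component, so all three are central of order $2$. Consequently $x^4=(x^2)^2=1$, $[x,y]^2=1$ and $(x,y,z)^2=1$, while the remaining six identities $[x^2,y]=[[x,y],t]=[(x,y,z),t]=1$ and $(x^2,y,z)=([x,y],z,t)=((x,y,z),t,s)=1$ hold because a central element commutes and associates with everything. This shows $\mathcal{F}\in\mathcal{E}$.

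The main obstacle is the Moufang verification, i.e.\ matching the three components of the cocycle relation to Lemma~\ref{lemma2.3}; the delicate part is the $U$-component (\ref{eq13}), since tracking associators is the most intricate bookkeeping, and it is exactly there that the auxiliary relations (\ref{eq10})--(\ref{eq12}) are consumed.
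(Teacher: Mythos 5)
Your proof is correct and follows essentially the same route as the paper: both expand the Moufang identity $(xy)(zx)=(x(yz))x$ via the product rule (\ref{eq3}), observe that the central contributions cancel, split the resulting relation along the decomposition $\bar V\cdot W\cdot U$ into exactly the identities (\ref{eq8}), (\ref{eq9}) and (\ref{eq13}), and conclude by Lemma~\ref{lemma2.3}. Your central-extension framing and your explicit verification of the loop axioms and of the nine identities of $\mathcal{E}$ (which the paper dispatches with ``by the definition of $\mathcal{F}$'') are only a more careful write-up of the same argument, not a different one.
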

\begin{proof}
By the definition of ${\cal F}$ we have ${\cal F} \in {\cal E}$. We consider $\sigma, \mu, \lambda \in V$. We are going to prove the following Moufang identity:
\begin{equation}
(\sigma. \mu).(\lambda. \sigma)= (\sigma.(\mu . \lambda)).\sigma  \label{eq15}
\end{equation}

By the Definition (\ref{eq3}) we have: $$(\sigma. \mu).(\lambda. \sigma)= ((\sigma \Delta \mu).\{\sigma,\mu\}.\left[\sigma,\mu \right].\left[\sigma,\mu, \mu \right]).((\lambda \Delta \sigma).\{\lambda,\sigma\}.\left[\lambda,\sigma \right].\left[\lambda,\sigma, \sigma \right])$$

And by (\ref{eq3}) and (\ref{eq4}) we have: 
\begin{eqnarray*}(\sigma. \mu).(\lambda. \sigma)&=& (\mu \Delta \lambda ).\{\sigma \Delta \mu,\lambda \Delta \sigma\}.\{\sigma,\mu\}.\{\lambda,\sigma\}.\\&& \left[\sigma \Delta \mu,\lambda \Delta \sigma\right].\left[\sigma,\mu \right].\left[\lambda,\sigma \right].\left[\sigma,\mu, \mu \right].\\&&\left[\sigma \Delta \mu,\lambda \Delta \sigma,\lambda \Delta \sigma\right].\left[\lambda,\sigma, \sigma \right]
\end{eqnarray*}

On the other hand,
\begin{eqnarray*}(\sigma.(\mu . \lambda)).\sigma &=&(\sigma . (\mu \Delta \lambda).\{\mu, \lambda\}.\left[\mu, \lambda\right].\left[\mu, \lambda, \lambda\right]).\sigma\\&=&(\sigma \Delta (\mu \Delta \lambda).\{\sigma, \mu \Delta \lambda\}.\{\mu, \lambda\}.\left[\mu, \lambda\right].\\&&\left[\sigma, \mu \Delta \lambda\right].\left[\sigma, \mu \Delta \lambda, \mu \Delta \lambda\right].\left[\mu, \lambda, \lambda\right]).\sigma\\&=&(\mu \Delta \lambda).\{\sigma \Delta \mu \Delta \lambda, \sigma\}.\{\sigma, \mu \Delta \lambda\}.\{\mu, \lambda\}.\\&&
\left[\sigma \Delta \mu \Delta \lambda, \sigma\right].\left[\sigma, \mu \Delta \lambda\right].\left[\mu, \lambda\right].\left[\mu,\lambda,\lambda\right].\\&&\left[\sigma \Delta \mu \Delta \lambda, \sigma, \sigma\right].\left[\sigma, \mu \Delta \lambda, \mu \Delta \lambda\right]
\end{eqnarray*}

Now the equality (\ref{eq15}) follows from the Lemma \ref{lemma2.3}.

\end{proof}

We consider a subloop of $\cal F$, denoted by ${\cal F}_{n}$ and with generators $\left\{i \in I_{n}\right\}$, where $I_{n}=\left\{1,\dots,n\right\}$. 

The next proposition contains important results that we are going to use throughout this paper. The demonstration of the first and second result comes directly from (\cite{4}, Theorem 2). The last result is proved easily.

\begin{proposition} \label{prop2.5}Let $F$ be a Moufang loop.
\begin{enumerate}
\item If $(x,y,z)^{2}=1$ and all the commutators and associators of $F$ are central, then
      \begin{eqnarray*}
      [xy,z]=[x,z][y,z](x,y,z).
      \end{eqnarray*}
\item If the commutators and associators of $F$ are  central, then
      \begin{eqnarray*}
      (wx,y,z)=(w,y,z)(x,y,z).
       \end{eqnarray*}
       
       \item  If the squares and commutators of a Moufang loop $F$ are central, then 
       \begin{eqnarray*}(xy)^{2}=x^{2}y^{2}[x,y].\end{eqnarray*}
\end{enumerate}
\end{proposition}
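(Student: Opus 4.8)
The common engine for all three identities is the fact that a Moufang loop is \emph{diassociative} (Moufang's theorem): any two of its elements generate an associative subloop, i.e.\ a group. Alongside this I would use the defining relations $xy=yx\,[x,y]$ and $(ab)c=\bigl(a(bc)\bigr)(a,b,c)$, the standing centrality hypotheses (which let every commutator and associator be transported freely to the end of a product and collected), and the classical symmetry relations satisfied by a central associator in a Moufang loop, namely cyclic invariance $(x,y,z)=(y,z,x)=(z,x,y)$ and inversion under a transposition, e.g.\ $(x,z,y)=(x,y,z)^{-1}$.

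I would dispose of the third identity first, since it is genuinely the easy one. By diassociativity all of $x,y,xy$ lie in the group $\langle x,y\rangle$, so $(xy)^2=xyxy=x^2\,(x^{-1}yx)\,y$; writing $x^{-1}yx=y\,[y,x]$ and pulling the central commutator through $y$ gives $(xy)^2=x^2y^2\,[x,y]$ (up to the harmless inverse coming from the commutator convention, which disappears once $[x,y]^2=1$). The only essential hypothesis is centrality of the commutator, used to move $[y,x]$ past $y$; centrality of squares guarantees that both sides are honest central elements.

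For the first identity I would move $z$ leftwards through $xy$ one letter at a time, recording the central factor produced at each step. Starting from $(xy)z=\bigl(x(yz)\bigr)(x,y,z)$ and successively applying $yz=zy\,[y,z]$, $x(zy)=\bigl((xz)y\bigr)(x,z,y)^{-1}$, $xz=zx\,[x,z]$ and $(zx)y=\bigl(z(xy)\bigr)(z,x,y)$, one reaches $(xy)z=z(xy)\cdot(z,x,y)(x,z,y)^{-1}(x,y,z)\,[x,z][y,z]$. Comparing with $(xy)z=z(xy)\,[xy,z]$ reads off $[xy,z]$ as that product of central factors; the symmetry relations turn $(z,x,y)(x,z,y)^{-1}(x,y,z)$ into $(x,y,z)^{3}$, which the hypothesis $(x,y,z)^2=1$ reduces to $(x,y,z)$, yielding $[xy,z]=[x,z][y,z](x,y,z)$. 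This is exactly the point at which the order-two hypothesis on the associator is consumed.

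The second identity I would prove by the same bookkeeping applied to the defining relation $\bigl((wx)y\bigr)z=\bigl((wx)(yz)\bigr)(wx,y,z)$: peel $w$ off the front of both bracketings by repeated reassociation, so that every associator that appears has either $w$ or $x$ as an argument, and then use centrality together with the cyclic/transposition symmetries to cancel the mixed contributions, leaving precisely $(w,y,z)(x,y,z)$. As the authors remark, both of these first two identities are already contained in Chein--Goodaire \cite{4}, Theorem~2. I expect the main obstacle to be purely organizational: keeping correct track of the many central correction terms generated by the reassociations, and invoking the associator symmetry relations with the right orientation so that the unwanted terms genuinely cancel rather than merely appear to.
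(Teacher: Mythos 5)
You should first note what the paper's ``proof'' actually is: for items (1) and (2) it is a citation to Chein--Goodaire (\cite{4}, Theorem~2), and item (3) is dismissed as easy. So any genuine derivation necessarily departs from the paper. Your treatments of (1) and (3) are essentially correct self-contained arguments. For (1), the letter-by-letter transport of $z$ indeed gives $[xy,z]=(z,x,y)(x,z,y)^{-1}(x,y,z)[x,z][y,z]$, and the classical skew-symmetry of a central associator (cyclic invariance, inversion under transpositions --- itself a nontrivial Moufang fact, from Bruck or \cite{4}, which you correctly flag as an input rather than prove) turns the associator block into $(x,y,z)^3=(x,y,z)$. For (3), one small repair: the hypotheses do not include $[x,y]^2=1$, and your group computation in $\langle x,y\rangle$ yields $(xy)^2=x^2y^2[y,x]$, not $[x,y]$. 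But the gap closes in one line: since $(xy)^2$ is central, $(yx)^2=y(xy)^2y^{-1}=(xy)^2$, and comparing $x^2y^2[y,x]=y^2x^2[x,y]$ with central squares forces $[y,x]=[x,y]$.

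Item (2) is where your plan genuinely fails. Peeling $w$ off both bracketings of $((wx)y)z$ produces exactly the pentagon relation $(wx,y,z)=(x,y,z)\,(w,xy,z)\,(w,x,y)\,(w,x,yz)^{-1}$, and the mixed terms $(w,xy,z)$ and $(w,x,yz)$ retain compound arguments: skew-symmetry only moves the compound slot around (e.g.\ $(w,xy,z)=(xy,z,w)$), so ``expanding'' it is, up to a cyclic shift, precisely the linearity statement you are trying to prove --- the proposed cancellation is circular. More structurally, this bookkeeping uses only centrality and reassociation, hence is valid in any loop with central associators, and in that generality the associator need \emph{not} be multiplicative in each slot; the Moufang law must enter in an essential way beyond diassociativity. (This is visible in the paper itself: when the authors verify the Moufang identity for $\cal F$, the pentagon-type relations (\ref{eq8})--(\ref{eq10}) do not suffice, and the Moufang-specific constraint (\ref{eq13}) on the associator data is needed.) So for (2) you must either import the Moufang-derived absorption identities used in the proof of \cite{4}, Theorem~2, or do as the paper does and cite that theorem --- but then the ``organizational'' cancellation you describe is not a proof of it.
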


\begin{lemma}\label{lemma2.6}
Let $F_{n}$ be a free loop in ${\cal E}$, with free generator set $\{x_{1},\dots,x_{n}\}$.

Then for all $z \in {\cal Z}(F_{n})$, there are $\xi_{1},\dots,\xi_{n}$, $\xi_{ij}$, $\xi_{ijk}\in\{0,1\}$, with $i,j,k=1,\dots,n$ such that
\begin{equation}
z = \prod_{i=1}^{n}(x_{i}^{2})^{\xi_{i}}.\prod_{i<j}[x_{i},x_{j}]^{\xi_{ij}}.\prod_{i<j<k}(x_{i},x_{j},x_{k})^{\xi_{ijk}}. \label{eq16}
\end{equation}
\end{lemma}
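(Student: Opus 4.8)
\textbf{Proof plan for Lemma \ref{lemma2.6}.}

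The plan is to show that the center $\mathcal{Z}(F_n)$ is generated by the squares $x_i^2$, the commutators $[x_i,x_j]$ with $i<j$, and the associators $(x_i,x_j,x_k)$ with $i<j<k$, and that consequently every central element has the stated normal form. First I would verify that each of these elements actually lies in the center: the defining identities of $\mathcal{E}$ in \eqref{eq1} guarantee that squares, commutators, and associators all commute and associate with everything (for instance $[x^2,y]=1$, $[[x,y],t]=1$, $[(x,y,z),t]=1$, $(x^2,y,z)=1$, $([x,y],z,t)=1$, $((x,y,z),t,s)=1$), so the subloop $Z$ they generate is central, and in particular $Z$ is an abelian group of exponent $2$ (since $[x,y]^2=(x,y,z)^2=1$ and $x^4=1$ force each generator of $Z$ to have order dividing $2$). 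This already shows that any product of these generators can be written in the form \eqref{eq16} with exponents in $\{0,1\}$, after collecting like terms and discarding squares.

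The substantive step is the reverse inclusion: every $z\in\mathcal{Z}(F_n)$ lies in $Z$. Here I would use the structure of $F_n$ as a quotient (or analogue) of the loop $\mathcal{F}$ constructed above together with Proposition \ref{prop2.5}. The key observation is that because all commutators and associators are central, the three multilinearity-type formulas of Proposition \ref{prop2.5} let me expand any word in the generators into a product of a ``base'' element of the underlying elementary abelian $2$-group $V$ (the image of $x_1,\dots,x_n$ under the projection $F_n\to F_n/Z\in\mathcal{A}$) times a central factor which is itself a product of $x_i^2$'s, $[x_i,x_j]$'s, and $(x_i,x_j,x_k)$'s. Concretely, iterating $(xy)^2=x^2y^2[x,y]$, $[xy,z]=[x,z][y,z](x,y,z)$, and $(wx,y,z)=(w,y,z)(x,y,z)$ reduces every square, commutator, and associator of arbitrary words to products of those formed from the free generators alone; all higher commutators and associators vanish by \eqref{eq1}. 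Thus an arbitrary element of $F_n$ has the form $\bar{v}\cdot c$ with $\bar v\in V$ and $c\in Z$, and this element is central precisely when $\bar v=0$, i.e. when it already lies in $Z$.

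Finally I would argue uniqueness of the exponents, which makes \eqref{eq16} a genuine normal form rather than merely a spanning statement. This is where the freeness of $F_n$ in $\mathcal{E}$ is essential: since $\mathcal{F}$ (and hence $\mathcal{F}_n$) realizes the $x_i^2=\bar i$, $[x_i,x_j]=i\wedge j$, and $(x_i,x_j,x_k)=i\wedge j\wedge k$ as \emph{linearly independent} central elements living in the independent summands $\bar V$, $W=V\wedge V$, and $U=V\wedge V\wedge V$, no nontrivial product of them collapses to the identity. Because $F_n$ is free in $\mathcal{E}$ it maps onto $\mathcal{F}_n$, and under this map the generators in \eqref{eq16} are sent to these independent elements; hence the representation of $z$ is unique. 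The main obstacle I anticipate is the bookkeeping in the reduction step: one must be careful that the expansions via Proposition \ref{prop2.5} terminate and that every residual higher-order term (a commutator of a commutator, an associator involving a square, etc.) is killed by one of the identities in \eqref{eq1}, so that only the listed generators survive.
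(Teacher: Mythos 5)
Your proposal is correct and follows essentially the same route as the paper: the paper likewise introduces the central subloop $\mathcal{N}$ generated by the $x_i^2$, $[x_i,x_j]$ and $(x_i,x_j,x_k)$, applies the three expansion formulas of Proposition \ref{prop2.5} to conclude that $F_n/\mathcal{N}$ is an abelian group of exponent two, and deduces $\mathcal{N}=\mathcal{Z}(F_n)$. The only difference is that you additionally spell out the reverse inclusion (why a central element must have trivial image modulo $\mathcal{N}$) and a uniqueness statement for the exponents via the surjection onto $\mathcal{F}_n$ --- both points the paper leaves implicit, the latter not being claimed in the lemma itself but being exactly the independence fact used afterwards in the proof of Theorem \ref{theorem2.7}.
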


\begin{proof} Let $\cal{N}$ be a central subloop of $F_n$ generated by the set $\{x_{i}^{2},[x_{i},x_{j}],(x_{i},x_{j},x_{k})|i,j,k\in I_n\}.$

By the Proposition \ref{prop2.5} we have:
\begin{eqnarray*}
(x_{i}x_{j})^{2}&=&x_{i}^{2}x_{j}^{2}\left[x_{i},x_{j}\right];\\
\left[x_{i}x_{j},x_{k}\right]&=&\left[x_{i},x_{k}\right]\left[x_{j},x_{k}\right](x_{i},x_{j},x_{k});\\
(x_{i}x_{j},x_{k},x_{p})&=&(x_{i},x_{k},x_{p})(x_{j},x_{k},x_{p}).
\end{eqnarray*}
Hence $F_n/\cal{N}$ is an abelian group of expoent two. Then ${\cal{N}}={\cal{Z}}(F_{n})$ and the Lemma is proved.
\end{proof}

\begin{theorem}\label{theorem2.7}${\cal F}_{n}$ is a free loop in the variety ${\cal E}$ of rank $n$.
\end{theorem}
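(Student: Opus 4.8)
The plan is to compare $\mathcal{F}_n$ with the free loop $F_n$ of $\mathcal{E}$ on the generators $x_1,\dots,x_n$. Since $\mathcal{F}\in\mathcal{E}$ and varieties are closed under subloops, $\mathcal{F}_n\in\mathcal{E}$; being generated by $\{i : i\in I_n\}$, the universal property of $F_n$ furnishes a unique surjective homomorphism $\pi:F_n\to\mathcal{F}_n$ with $\pi(x_i)=i$. It therefore suffices to prove that $\pi$ is injective, for then $\pi$ is an isomorphism and $\mathcal{F}_n$ is free of rank $n$.

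First I would record what $\pi$ does to the building blocks of the centre. Because $\pi$ is a homomorphism, (\ref{eq2}) gives $\pi(x_i^{2})=\bar{i}$, $\pi([x_i,x_j])=i\wedge j$ and $\pi((x_i,x_j,x_k))=i\wedge j\wedge k$. By (\ref{eq4}) the subloop $\mathcal{Z}(\mathcal{F})=\bar{V}\,W\,U$ is an elementary abelian $2$-group whose distinguished generators $\{\bar{i}\}\cup\{i\wedge j : i<j\}\cup\{i\wedge j\wedge k : i<j<k\}$ form an $\mathbb{F}_2$-basis. Moreover the product rules show $\mathcal{F}_n=V_n\,\bar{V}_n\,W_n\,U_n$ (the spans with indices in $I_n$), and an element with nonzero $V_n$-component cannot be central, so $\mathcal{Z}(\mathcal{F}_n)=\mathcal{F}_n\cap\mathcal{Z}(\mathcal{F})=\bar{V}_n\,W_n\,U_n$ and $\mathcal{F}_n/\mathcal{Z}(\mathcal{F}_n)\cong V_n$, which is elementary abelian of rank $n$.

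Since $\pi$ is surjective it carries $\mathcal{Z}(F_n)$ into $\mathcal{Z}(\mathcal{F}_n)$, inducing a surjective homomorphism $h:F_n/\mathcal{Z}(F_n)\to\mathcal{F}_n/\mathcal{Z}(\mathcal{F}_n)\cong V_n$ that sends the class of $x_i$ to $v_i$. As $F_n/\mathcal{Z}(F_n)$ is generated by the classes of $x_1,\dots,x_n$ and has exponent $2$ (the proof of Lemma~\ref{lemma2.6}), it is elementary abelian of rank at most $n$; being surjected onto $V_n$, which has rank $n$, the map $h$ must be an isomorphism. This reduces the whole problem to the centre: if $\pi(w)=1$ then $h$ sends the class of $w$ to $1$, whence $w\in\mathcal{Z}(F_n)$, so it is enough to show that $\pi$ is injective on $\mathcal{Z}(F_n)$.

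This last point is the crux, and it is exactly where Lemma~\ref{lemma2.6} is used. Given $z\in\mathcal{Z}(F_n)$, I would write it in the canonical form (\ref{eq16}) and apply $\pi$: by the formulas above, $\pi(z)=\prod_i\bar{i}^{\,\xi_i}\prod_{i<j}(i\wedge j)^{\xi_{ij}}\prod_{i<j<k}(i\wedge j\wedge k)^{\xi_{ijk}}$, a product of distinct basis vectors of the $\mathbb{F}_2$-space $\mathcal{Z}(\mathcal{F})$. Linear independence then forces $\pi(z)=1$ to imply that every exponent $\xi_i,\xi_{ij},\xi_{ijk}$ vanishes, i.e. $z=1$; thus $\pi|_{\mathcal{Z}(F_n)}$ is injective, and with the previous paragraph $\pi$ is an isomorphism. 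The real content is not the reduction but its input: the normal form of central elements supplied by Lemma~\ref{lemma2.6}, together with the fact — built into the construction of $\mathcal{F}$ — that the squares, commutators and associators of the generators stay independent in $\mathcal{Z}(\mathcal{F})$, so that $\mathcal{F}$ imposes no central relation beyond those forced by $\mathcal{E}$. Establishing that independence (equivalently, that $\mathcal{Z}(\mathcal{F})$ has the claimed basis) is the main obstacle, and it is precisely what the explicit cocycle-style definition (\ref{eq3})--(\ref{eq7}) and Lemma~\ref{lemma2.3} were arranged to guarantee.
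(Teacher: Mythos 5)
Your proposal is correct and takes essentially the same route as the paper: the unique surjection $\varphi\colon F_{n}\to {\cal F}_{n}$ from the abstract free loop, reduction of injectivity to the centre, and the normal form of Lemma~\ref{lemma2.6} combined with the linear independence of $\bar{i}$, $i\wedge j$, $i\wedge j\wedge k$ in ${\cal Z}({\cal F})$. The only difference is that you spell out two steps the paper leaves implicit --- the inclusion $\ker\varphi\subseteq {\cal Z}(F_{n})$ (via the induced isomorphism $F_{n}/{\cal Z}(F_{n})\to V_{n}$) and the reason a nontrivial central element has nontrivial image --- which is detail-filling rather than a different argument.
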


\begin{proof}
Let $F_{n} \in {\cal E}$, $F_{n}=\left\langle x_{1},\dots,x_{n}\right\rangle$ be a free loop with $n$ generators, then by the definition of free loop  there is a unique surjective morphism $\varphi: F_{n} \longrightarrow {\cal F}_{n}$ such that $\varphi(x_{i})=i.$

To show that ${\cal F}_{n}$ is free, we just need to prove that $\varphi$ is injective. It is clear that $ker\varphi\subseteq {\cal{Z}}(F_{n}).$ By the Lemma \ref{lemma2.6} any $z\in ker\varphi\subseteq {\cal{Z}}(F_{n})$ has the form  given in (\ref{eq16}). Then $\varphi(z)\not=1$ by definition of $\cal F.$ Therefore, ${\cal F}_{n}$ is free loop.

\end{proof}

\begin{corollary}
(1) For any code loop $L$ of rank $n$ there is a homomorphism \linebreak $\varphi:{\cal F}_{n}\longrightarrow L$ such that $\varphi({\cal F}_{n})=L$ and codim$_{{\cal{Z}}({\cal F}_{n})}ker(\varphi)=1.$

(2) For all $\textbf{F}_{2}$-subspace $T \subset \cal{Z}({\cal F}_{n})$ of codimension $1$ we have a code loop $L(T)={\cal F}_{n}/T.$

(3) The loop $L(T)$ is a group if and only if $T \supseteq U_{n} = ({\cal F}_{n},{\cal F}_{n},{\cal F}_{n})={\cal F}_{n}\cap U$, 
where $U=(\cal F, \cal F, \cal F).$
\end{corollary}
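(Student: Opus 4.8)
The plan is to set up the correspondence, implicit in the statement, between codimension-one ${\bf F}_2$-subspaces of ${\cal Z}({\cal F}_n)$ and code loops of rank $n$, handling the three items in turn with Theorem \ref{theorem2.7}, Lemma \ref{lemma2.6} and Proposition \ref{prop2.5} as the main tools. Throughout I would use that, since ${\cal F}_n\in{\cal E}$, the squares, commutators and associators have order dividing $2$ and are central, so ${\cal Z}({\cal F}_n)$ is an elementary abelian $2$-group, i.e.\ an ${\bf F}_2$-space, and by Lemma \ref{lemma2.6} it has the explicit basis $\{\,\bar i,\,i\wedge j,\,i\wedge j\wedge k\,\}$; in particular ${\cal F}_n$ is finite and ${\cal F}_n/{\cal Z}({\cal F}_n)\cong{\bf F}_2^{\,n}$. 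For (1), let $L$ be a code loop of rank $n$; by Chein--Goodaire (\cite{4}) its nonidentity square, commutator and associator coincide in a single central involution $e$, so $L^2=[L,L]=(L,L,L)=\{1,e\}$ and $L/\{1,e\}\cong{\bf F}_2^{\,n}$. Lifting a basis of this quotient to generators $v_1,\dots,v_n$ of $L$, freeness (Theorem \ref{theorem2.7}) yields a surjective morphism $\varphi\colon{\cal F}_n\to L$ with $\varphi(i)=v_i$. I would then compose $\varphi$ with $L\to L/\{1,e\}$: since squares, commutators and associators of ${\cal F}_n$ map into $\{1,e\}$, this composite kills ${\cal Z}({\cal F}_n)$ and induces the identity ${\bf F}_2^{\,n}\to{\bf F}_2^{\,n}$, whence $\ker\varphi\subseteq{\cal Z}({\cal F}_n)$ and $\varphi({\cal Z}({\cal F}_n))=\{1,e\}$. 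As $\{1,e\}$ has order $2$, rank--nullity over ${\bf F}_2$ gives $\mathrm{codim}_{{\cal Z}({\cal F}_n)}\ker\varphi=1$.

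For (2), a codimension-one subspace $T\subset{\cal Z}({\cal F}_n)$ is a central, hence normal, subloop, so $L(T)={\cal F}_n/T$ is a Moufang loop, and it lies in ${\cal E}$ as a quotient of ${\cal F}_n$. Here ${\cal Z}({\cal F}_n)/T$ is a central subloop of order $2$ and $L(T)\big/\big({\cal Z}({\cal F}_n)/T\big)\cong{\cal F}_n/{\cal Z}({\cal F}_n)\cong{\bf F}_2^{\,n}\in{\cal A}$, so $L(T)$ is by definition an $E$-loop; being finite, it is a code loop by the characterization of finite code loops as finite $E$-loops (\cite{4}). Its rank is $n$ because the quotient by its distinguished central involution is ${\bf F}_2^{\,n}$.

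For (3), $L(T)$ is a group iff it is associative, i.e.\ iff every associator is trivial. Since $T$ is central one checks $(xT,yT,zT)=(x,y,z)\,T$, so $L(T)$ is a group iff $(x,y,z)\in T$ for all $x,y,z\in{\cal F}_n$. By Proposition \ref{prop2.5}(2) (and its analogues in the remaining arguments) the associators are additive in each slot and so reduce to products of the basic associators $(x_i,x_j,x_k)=i\wedge j\wedge k$; hence the set of associators spans exactly $U_n={\cal F}_n\cap U$. Because $T$ is an ${\bf F}_2$-subspace, all associators lie in $T$ iff $U_n\subseteq T$, which gives the stated equivalence.

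The hard part, and the step I would guard most carefully, is the exactness in (1): that $\varphi({\cal Z}({\cal F}_n))$ has order \emph{exactly} $2$ and that $\varphi$ is onto, which rest on the Chein--Goodaire uniqueness of the nonidentity square/commutator/associator and on the chosen $v_i$ generating all of $L$ (so that $e$ is realized among their squares, commutators or associators); the degenerate elementary abelian case, where $e$ is not so realized, must be excluded or treated separately. The other genuine external input is the finite-$E$-loop characterization used in (2); everything else is bookkeeping of central quotients, made routine by Lemma \ref{lemma2.6}.
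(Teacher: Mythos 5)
The paper offers no proof of this corollary at all: it is stated as an immediate consequence of Theorem \ref{theorem2.7}, Lemma \ref{lemma2.6}, Proposition \ref{prop2.5} and the Chein--Goodaire characterization of finite code loops as $E$-loops, and your argument is precisely the intended filling-in of those details, so it matches the paper's (implicit) approach and is correct. One remark: the degenerate case you flag is real --- the elementary abelian group $\Z_2^{n+1}$ is a code loop of rank $n$ (take $V$ spanned by pairwise disjoint weight-$8$ vectors) yet is not a homomorphic image of the $n$-generated loop ${\cal F}_{n}$, so part (1) as literally stated silently excludes it (consistently with the paper's later restriction to nonassociative code loops), and your explicit caveat is a strength of your write-up rather than a gap.
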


\begin{proposition}\label{prop2.9} Let $T_{1}$ and $T_{2}$ be $\textbf{F}_{2}$-subspaces of ${\cal{Z}}({\cal F}_{n})$ of codimension $1$, so
$L(T_{1})\cong L(T_{2})$ if and only if there is an automorphism $\varphi$ of ${\cal F}_{n}$ such that $T_{1}^{\varphi}=T_{2}.$
\end{proposition}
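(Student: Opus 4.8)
The plan is to prove both implications, with the reverse implication being essentially formal and the forward implication carrying the real content. Throughout I write $\pi_{i}\colon {\cal F}_{n}\to {\cal F}_{n}/T_{i}=L(T_{i})$ for the canonical projections; these exist because each $T_{i}\subseteq {\cal Z}({\cal F}_{n})$ is central, hence a normal subloop.

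For the ($\Leftarrow$) direction, suppose $\varphi$ is an automorphism of ${\cal F}_{n}$ with $T_{1}^{\varphi}=T_{2}$. I would simply observe that $x\in T_{1}$ iff $\varphi(x)\in T_{2}$, so the composite $\pi_{2}\circ\varphi\colon {\cal F}_{n}\to L(T_{2})$ is surjective with kernel exactly $T_{1}$; by the first isomorphism theorem for Moufang loops this induces an isomorphism $L(T_{1})\cong L(T_{2})$.

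For the ($\Rightarrow$) direction, given an isomorphism $\psi\colon L(T_{1})\to L(T_{2})$, the aim is to lift it to an automorphism of ${\cal F}_{n}$. Since ${\cal F}_{n}$ is free in ${\cal E}$ of rank $n$ by Theorem \ref{theorem2.7}, for each generator $i\in I_{n}$ I can choose some $\varphi(i)\in{\cal F}_{n}$ with $\pi_{2}(\varphi(i))=\psi(\pi_{1}(i))$, and freeness extends $i\mapsto\varphi(i)$ to a unique endomorphism $\varphi$ of ${\cal F}_{n}$ satisfying $\pi_{2}\circ\varphi=\psi\circ\pi_{1}$. The crux is then to prove that $\varphi$ is bijective. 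For surjectivity, set $H=\langle\varphi(1),\dots,\varphi(n)\rangle$. Because $\psi$ and $\pi_{1}$ are surjective, the elements $\psi(\pi_{1}(i))$ generate $L(T_{2})$, so $\pi_{2}(H)=L(T_{2})$, i.e. $H\cdot T_{2}={\cal F}_{n}$. Writing each generator as $i=h_{i}t_{i}$ with $h_{i}\in H$ and $t_{i}\in T_{2}\subseteq{\cal Z}({\cal F}_{n})$, I would use Proposition \ref{prop2.5} together with the facts that the $t_{i}$ are central and satisfy $t_{i}^{2}=1$ to deduce that every square $i^{2}$, every commutator $[i,j]$, and every associator $(i,j,k)$ already lies in $H$. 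By Lemma \ref{lemma2.6} these elements generate ${\cal Z}({\cal F}_{n})$, whence $T_{2}\subseteq{\cal Z}({\cal F}_{n})\subseteq H$, and therefore $H=H\cdot T_{2}={\cal F}_{n}$.

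To upgrade surjectivity to bijectivity I would invoke finiteness: by Lemma \ref{lemma2.6} the centre ${\cal Z}({\cal F}_{n})$ is an $\textbf{F}_{2}$-space of dimension at most $n+\binom{n}{2}+\binom{n}{3}$, while ${\cal F}_{n}/{\cal Z}({\cal F}_{n})$ is elementary abelian of rank $n$, so ${\cal F}_{n}$ is a finite loop and a surjective endomorphism of it is automatically injective. Hence $\varphi$ is an automorphism, and from $\pi_{2}\circ\varphi=\psi\circ\pi_{1}$ with $\psi$ injective one reads off $\varphi^{-1}(T_{2})=T_{1}$, that is $T_{1}^{\varphi}=T_{2}$. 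I expect the main obstacle to be precisely the surjectivity step: the naive argument only yields $H\cdot T_{2}={\cal F}_{n}$, and the genuine work is the computation via Proposition \ref{prop2.5} showing that the central generators are captured by $H$ itself rather than merely modulo $T_{2}$; once freeness and finiteness are in hand, the remaining steps are formal.
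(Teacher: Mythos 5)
Your proof is correct, and its skeleton coincides with the paper's: the ($\Leftarrow$) direction via the induced map on quotients, and the ($\Rightarrow$) direction by lifting the isomorphism $\psi$ through the freeness of ${\cal F}_{n}$ (Theorem \ref{theorem2.7}) to an endomorphism $\tilde{\sigma}$ satisfying $\pi_{2}\circ\tilde{\sigma}=\psi\circ\pi_{1}$, followed by a diagram chase. Where you genuinely diverge is at exactly the point you flagged as the crux: the paper simply asserts that the lift is surjective (``but $\tilde{\sigma}$ is surjective'') and uses this in its chase showing $\tilde{\sigma}|_{T_{1}}\colon T_{1}\to T_{2}$ is onto; it never proves that assertion, nor does it address injectivity of $\tilde{\sigma}$ on all of ${\cal F}_{n}$ --- a real issue, since the proposition demands an automorphism of ${\cal F}_{n}$, not merely a bijection $T_{1}\to T_{2}$. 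Your argument supplies precisely the missing justification: from $H\cdot T_{2}={\cal F}_{n}$ you write each generator as $i=h_{i}t_{i}$ with $t_{i}$ central of order $2$, deduce via Proposition \ref{prop2.5} that $i^{2}=h_{i}^{2}$, $[i,j]=[h_{i},h_{j}]$ and $(i,j,k)=(h_{i},h_{j},h_{k})$ all lie in $H$, and then invoke Lemma \ref{lemma2.6} to get ${\cal Z}({\cal F}_{n})\subseteq H$, hence $T_{2}\subseteq H$ and $H={\cal F}_{n}$; and your finiteness observation (the centre has dimension at most $n+\binom{n}{2}+\binom{n}{3}$ and the quotient is elementary abelian of rank $n$, so ${\cal F}_{n}$ is finite and a surjective endomorphism is bijective) closes the injectivity gap the paper leaves open. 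In short: same route, but your version is the complete one --- the paper's proof, read literally, depends on an unproved surjectivity claim that your Proposition \ref{prop2.5}/Lemma \ref{lemma2.6} computation is exactly designed to establish.
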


\begin{proof}
Let $\varphi:{\cal F}_{n}\longrightarrow {\cal F}_{n}$ be an automorphism such that $T_{1}^{\varphi}=T_{2},$ where $T_{1}$ and $T_{2}$ are $\textbf{F}_{2}$-subspaces of $ {\cal{Z}}({\cal F}_{n})$ of codimension $1$. We define $\bar{\varphi}:{\cal F}_{n}/T_{1} \longrightarrow {\cal F}_{n}/T_{2}$ by $\bar{\varphi}(\bar{x})=\overline{\varphi(x)}$, for all $\bar{x} \in {\cal F}_{n}/T_{1}$. Clearly, $\varphi$ is an isomorphism between code loops.

Now we construct the following commutative diagram: 
\begin{center}
\begin{math}
\xymatrix{
1 \ar[r]& T_{1}\ar[r]^{i_{1}}\ar[d]^{\varphi=\tilde{\sigma}|_{T_{1}}} &  {\cal F}_{n}\ar[r]^{\pi_{1}}\ar[d]^{\tilde{\sigma}} & L_{1} \ar[r]\ar[d]^{\sigma}& 1\\
1 \ar[r]& T_{2}\ar[r]^{i_{2}}&  {\cal F}_{n}\ar[r]^{\pi_{2}} & L_{2}\ar[r] & 1 }\end{math}
\end{center}

We suppose that $L_{j}={\cal F}_{n}/T_{j}$, $\sigma: L_{1}\longrightarrow L_{2}$ is an isomorphism, $\pi_{j}$ is a surjective homomorphism and $i_{j}$ is the inclusion application, for $j=1,2$. Thus, 
$$ \xymatrix{1 \ar[r]& T_{j}\ar[r]^{i_{j}} &  {\cal F}_{n}\ar[r]^{\pi_{j}} & L_{j} \ar[r]& 1}$$ is a short exact sequence, for $j=1,2$.

Let $x_1,\dots,x_n$ be a set of free generators of ${\cal F}_{n}$ and $v_i=\sigma(\pi_1(x_i))\in L_2.$ Since $\pi_2$ is surjective, there exists $y_i\in {\cal F}_{n}$ such that $\pi_2(y_i)=v_i.$ Then there exists a homomorphism $\tilde{\sigma}:{\cal F}_{n}\to {\cal F}_{n}$ such that
$\tilde{\sigma}(x_i)=y_i.$

So, since both the right part of the diagram is commutative by construction, we have $\varphi(T_1)=\tilde{\sigma}|_{T_1}(T_1)\subset T_2.$

We have that both $T_{1}$ and $T_{2}$ has the same dimension, then we just need to prove that $\varphi$ is surjective. In fact, let $x\in T_{2}$, then $i_{2}(x)=x \in {\cal F}_{n}$, but $\tilde{\sigma}$ is surjective, thus, there exists $i \in {\cal F}_{n}$ such that $\tilde{\sigma}(i)=x$. On the other hand, $x \in Ker(\pi_{2})=Im(i_{2})$, because the sequences of lines of the previous diagram are short exact sequences. Hence $\pi_{2}(x)=T_{2}$, that is, $\pi_{2}(\tilde{\sigma}(i))=T_{2}$. As the right part of the diagram is commutative, we have $\sigma(\pi_{1}(i))=T_{2}$. Then $\pi_{1}(i) \in Ker(\sigma)=\{T_{1}\}$, because $\sigma$ is bijective, that is, $\pi_{1}(i) = T_{1}$. Hence, $i \in Ker(\pi_{1})=Im(i_{1})=T_{1}$, as we wanted.

\end{proof}

We denote by ${\cal L}_{n}$ the set of all subspaces $T \subset {\cal{Z}}({\cal F}_{n})$ of codimension $1$ such that $T \not \supseteq U_{n}$. The group $G_{n}=\mbox{\normalfont Aut}{{\cal F}_{n}}$, group of automorphisms of ${{\cal F}_{n}}$, acts on ${\cal L}_{n}$.

Let $L_{n}$ be the set of the correspondents $G_{n}$-orbits. We denote by $O_{T}$ the orbit of $T$, that is,  $O_{T}=\left\{T^{\sigma}| \sigma \in G_{n}\right\}$.

\begin{corollary}
The correspondence $T \longrightarrow L(T)$ gives a bijection between $L_{n}$ and the set of the isomorphism classes of nonassociative code loops of rank $n$.
\end{corollary}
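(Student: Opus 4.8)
The plan is to assemble the statement formally from the preceding Corollary and Proposition~\ref{prop2.9}; essentially all the substantive work has already been done there, so the task is to package it correctly. First I would verify that the assignment $T \longmapsto L(T)$ sends each $T \in {\cal L}_{n}$ to a \emph{nonassociative} code loop of rank $n$, and that it factors through the $G_{n}$-action to give a well-defined map from $L_{n}$ to the set of isomorphism classes. Then I would establish injectivity and surjectivity separately.

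For the target being correct: by part (2) of the preceding Corollary, $L(T) = {\cal F}_{n}/T$ is a code loop for every codimension-$1$ subspace $T \subset {\cal Z}({\cal F}_{n})$, and by part (3), $L(T)$ is a group precisely when $T \supseteq U_{n}$. Since ${\cal L}_{n}$ consists exactly of those $T$ with $T \not\supseteq U_{n}$, every $L(T)$ with $T \in {\cal L}_{n}$ is nonassociative. To see the rank equals $n$, I would note that $T \subseteq {\cal Z}({\cal F}_{n})$ is central, so passing to the quotient leaves the linear part ${\cal F}_{n}/{\cal Z}({\cal F}_{n}) \cong {\bf F}_{2}^{n}$ unchanged, whence the associated double even code has $\dim_{{\bf F}_{2}} = n$.

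Well-definedness on orbits and injectivity both come from Proposition~\ref{prop2.9}. If $T_{2} = T_{1}^{\varphi}$ for some $\varphi \in G_{n}$, that is $O_{T_{1}} = O_{T_{2}}$, then $L(T_{1}) \cong L(T_{2})$, so the map descends to $L_{n}$; conversely, if $L(T_{1}) \cong L(T_{2})$ then the same Proposition produces an automorphism of ${\cal F}_{n}$ carrying $T_{1}$ to $T_{2}$, so $O_{T_{1}} = O_{T_{2}}$. For surjectivity I would take an arbitrary nonassociative code loop $L$ of rank $n$ and invoke part (1) of the preceding Corollary to obtain a surjection $\varphi \colon {\cal F}_{n} \longrightarrow L$ with $T := \ker\varphi$ of codimension $1$ in ${\cal Z}({\cal F}_{n})$; then $L \cong {\cal F}_{n}/T = L(T)$, and since $L$ is nonassociative part (3) forces $T \not\supseteq U_{n}$, that is $T \in {\cal L}_{n}$. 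Hence $O_{T} \in L_{n}$ maps to the isomorphism class of $L$.

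I expect no deep obstacle, since Proposition~\ref{prop2.9} already encapsulates the hard equivalence between isomorphism of the quotients and conjugacy of the subspaces under $G_{n}$. The one place demanding a little care is surjectivity: one must be sure that the homomorphism furnished by part (1) is indeed surjective with kernel of exactly codimension $1$ in the center, and that the nonassociativity hypothesis translates through part (3) into $T \in {\cal L}_{n}$ rather than merely $T \subseteq {\cal Z}({\cal F}_{n})$. Confirming the rank bookkeeping, namely that $L(T)$ has rank exactly $n$ and not less, is the only other point at which I would slow down.
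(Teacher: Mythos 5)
Your proposal is correct and follows exactly the route the paper intends: the paper states this corollary without a written proof, as an immediate consequence of the preceding Corollary (parts (1)--(3)) and Proposition~\ref{prop2.9}. Your assembly --- well-definedness on $G_{n}$-orbits and injectivity from Proposition~\ref{prop2.9}, surjectivity from part (1), and the nonassociativity/rank bookkeeping from parts (2) and (3) together with centrality of $T \subseteq {\cal Z}({\cal F}_{n})$ --- is precisely that deduction, with the minor details (kernel of codimension $1$, $T \not\supseteq U_{n}$) checked where they should be.
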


Let $L$ be a code loop with generator set $X=\left\{x_{1},\dots,x_{n}\right\}$ and center $\left\{1,-1\right\}$. Then we define the characteristic vector of $L$, denoted by $\lambda_{X}(L)$ or $\lambda(L)$, by \[\lambda(L)=(\lambda_{1},\dots,\lambda_{n};\lambda_{12},\dots,\lambda_{1n},\dots,\lambda_{(n-1)n};\lambda_{123},\dots,\lambda_{12n},\dots,\lambda_{(n-2)(n-1)n}),\] where $\lambda_{i}, \lambda_{ij},\lambda_{ijk} \in \textbf{F}_{2}$, $(-1)^{\lambda_{i}}=x_{i}^{2},\;\; (-1)^{\lambda_{ij}}=[x_{i},x_{j}]\; \;\mbox{e}\; \; (-1)^{\lambda_{ijk}}=(x_{i},x_{j},x_{k})$.
 
  With $\lambda(L)\neq 0$ we can associate a subspace $T=T_{\lambda}$ in ${\cal{Z}}({\cal F}_{n})$ of codimension 1 as follows:
 
 We choose an element $\lambda_{ijk}=1$ and we denote it by $x=x_{i}\wedge x_{j}\wedge x_{k}$. \linebreak Then 
$T_{\lambda}=\textbf{F}_{2}\left\{x_{i}^{2},xx_{j}^{2}| \lambda_{i}=0, \lambda_{j}=1\right\} + \textbf{F}_{2}\left\{[x_{p},x_{q}],x[x_{i},x_{j}] | \lambda_{pq}=0, \lambda_{ij}=1\right\}$ +\\ $+ \textbf{F}_{2}\left\{(x_{i},x_{j},x_{k}), x(x_{p},x_{q},x_{l}) | \lambda_{ijk}=0, \lambda_{pql}=1\right\}$, by definition.

The following proposition is a simple corollary of definitions.
\begin{proposition}
 The application $\lambda \longrightarrow T_{\lambda}$ is a bijection between the set of  characteristic vectors of nonassociatives code loops of rank $n$ and the subspaces $T$ in ${\cal Z}={\cal Z}({\cal F}_{n})$ of codimension 1  such that $U_{n} \not\subset T$.  
\end{proposition}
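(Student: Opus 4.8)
The plan is to recognize the subspace $T_\lambda$ as the kernel of a single ${\bf F}_2$-linear functional determined by $\lambda$; this both realizes the correspondence as the standard duality between hyperplanes and nonzero functionals over ${\bf F}_2$ and removes the apparent dependence of $T_\lambda$ on the chosen element $x$. First I would introduce the functional $f_\lambda:{\cal Z}({\cal F}_n)\to{\bf F}_2$ defined on the basis of Lemma \ref{lemma2.6} by $f_\lambda(x_i^2)=\lambda_i$, $f_\lambda([x_i,x_j])=\lambda_{ij}$, $f_\lambda((x_i,x_j,x_k))=\lambda_{ijk}$, and argue that $T_\lambda=\ker f_\lambda$. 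Choosing $x=(x_i,x_j,x_k)$ with $\lambda_{ijk}=1$ (so $f_\lambda(x)=1$), every generator listed in the definition of $T_\lambda$ lies in $\ker f_\lambda$: an unmodified generator $b$ is used exactly when $\lambda_b=0$, giving $f_\lambda(b)=0$, whereas a modified generator $xb$ is used exactly when $\lambda_b=1$, giving $f_\lambda(xb)=f_\lambda(x)+f_\lambda(b)=1+1=0$.

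I would then verify that these generators are linearly independent and count them. Expanding them in the basis $\{x_i^2\}\cup\{[x_p,x_q]\}\cup\{(x_i,x_j,x_k)\}$, the coordinate along $x_i^2$ appears only in the square-generator for the index $i$, the coordinate along $[x_p,x_q]$ only in the commutator-generator for $(p,q)$, and the coordinate along $(x_i,x_j,x_k)$ (for each triple other than the chosen one) only in the corresponding associator-generator; stripping these off in turn forces all coefficients to be zero. As there are $n+\binom{n}{2}+\left(\binom{n}{3}-1\right)=\dim_{{\bf F}_2}{\cal Z}({\cal F}_n)-1$ of them, they form a basis of the hyperplane $\ker f_\lambda$, whence $T_\lambda=\ker f_\lambda$. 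This is the one step requiring genuine care---everything afterwards is formal---and its payoff is that $T_\lambda$ is a well-defined codimension-$1$ subspace, independent of the auxiliary choice of $x$.

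With the identification $T_\lambda=\ker f_\lambda$ in hand, the bijection is almost immediate. Over ${\bf F}_2$ the unique nonzero scalar is $1$, so each codimension-$1$ subspace is the kernel of exactly one nonzero functional; hence $\lambda\mapsto\ker f_\lambda$ is a bijection between nonzero vectors $\lambda$ and codimension-$1$ subspaces of ${\cal Z}({\cal F}_n)$. To match the two restricted families I would use that $U_n=({\cal F}_n,{\cal F}_n,{\cal F}_n)$ is spanned by the associators $(x_i,x_j,x_k)$: thus $U_n\subseteq\ker f_\lambda$ if and only if $f_\lambda$ annihilates every associator, i.e. iff $\lambda_{ijk}=0$ for all $i<j<k$, so that $U_n\not\subset T_\lambda$ holds exactly when some $\lambda_{ijk}=1$. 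By the Corollary characterizing when $L(T)$ is a group, this is precisely the condition that $L(T_\lambda)={\cal F}_n/T_\lambda$ be nonassociative.

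Finally I would confirm that the vectors with some $\lambda_{ijk}=1$ are exactly the characteristic vectors of nonassociative code loops of rank $n$. Reducing the defining relations of $\lambda(L)$ modulo $T_\lambda$ gives $x_i^2=(-1)^{\lambda_i}$ and the analogous equalities for commutators and associators in $L(T_\lambda)$, so $\lambda$ is the characteristic vector of the nonassociative code loop $L(T_\lambda)$; conversely the characteristic vector of any nonassociative code loop must have a nontrivial associator coordinate. This shows the domain and codomain correspond as claimed, and combined with the displayed bijection it yields the stated result.
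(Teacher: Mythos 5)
Your proof is correct, and there is in fact nothing in the paper to compare it against line by line: the authors introduce this proposition with the sentence ``The following proposition is a simple corollary of definitions'' and give no argument at all. Your writeup is exactly the formalization that remark presupposes, and it adds one genuinely worthwhile point the paper glosses over: the identification $T_{\lambda}=\ker f_{\lambda}$. The paper's definition of $T_{\lambda}$ depends on an auxiliary choice of a triple with $\lambda_{ijk}=1$, and your functional realizes $T_{\lambda}$ intrinsically, proving in passing both that the subspace is independent of that choice and that the listed generators really do span a codimension-$1$ subspace --- via the count $n+\binom{n}{2}+\left(\binom{n}{3}-1\right)=\dim_{{\bf F}_{2}}{\cal Z}({\cal F}_{n})-1$, which the paper never records (note that the generator attached to the chosen triple is $x\cdot x=1$ and so is trivial, which is why one associator generator drops out). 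The remaining steps --- that over ${\bf F}_{2}$ each hyperplane is the kernel of a unique nonzero functional, that $U_{n}\subseteq\ker f_{\lambda}$ iff all $\lambda_{ijk}=0$ because $U_{n}$ is spanned by the basic associators $(x_{i},x_{j},x_{k})$, and that $\lambda$ is recovered as the characteristic vector of $L(T_{\lambda})={\cal F}_{n}/T_{\lambda}$ since $x\notin T_{\lambda}$ maps to the central element $-1$ --- are precisely the routine verifications the authors elide. I see no gap.
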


 So we have a $G_{n}$-action over the set of the characteristic vectors. The action is defined by ${\lambda}^{\varphi}=\mu$ if and only if ${T_{\lambda}}^{\varphi}=T_{\mu}$, for every $\varphi \in G_{n}=\mbox{\normalfont Aut}({\cal F}_{n})$. 
 
 Without loss of generality we can assume $\lambda_{123}=1$. In this case, we have $2^{m}$ characteristic vectors where $m=n+\frac{n(n-1)}{2}+\frac{n(n-1)(n-2)}{6}-1$. Let $\left\{O_{1},\dots,O_{k}\right\}$ be a set of $G_{n}-$orbits of characteristic vectors of the code loops of rank $n$. Then
 \begin{equation}
 \sum_{i=1}^{k}|O_{i}|=2^{m}.
 \end{equation}
 
 For a given code loop $L$, we denote by $\mbox{\normalfont Aut}L$ the group of automorphisms of $L$ and we define $\mbox{\normalfont Out} L = \mbox{\normalfont Aut} L/N(\mbox{\normalfont Aut} L)$, the group of outer automorphisms of $L$. By definition, $N(\mbox{\normalfont Aut} L)=\left\{\phi \in \mbox{\normalfont Aut} L | \phi(x)=\pm x, \forall x \in L\right\}.$  

  Let $G_{n}^{i}=\left\{\varphi \in G_{n} | \lambda^{\varphi} = \lambda\right\}.$ 
  We note that $\varphi \in G_{n}^{i}$ induces an automorphism of the correspondent code loop $L_{i}={\cal F}_{n}/T_{\lambda}$. 
  
 For the next proposition, we define  ${\cal Z}^{n}=\{\varphi\in G_{n}|\forall i, \,\varphi(i)=iz_{i},\, z_i\in {\cal{Z}}({\cal F}_n)\}\simeq {\cal{Z}}({\cal F}_n)^n$ and we denote by $GL_{n}(2)$ the General Linear Group of degree $n$ over a finite field with 2 elements. 
 
  \begin{proposition} Let $\overline{GL_{n}(2)}=\{\varphi \in G_{n}|i^{\varphi}\subseteq V, \; \mbox{for all}\; \; i \in \{1,\dots,n\}\}\subset G_{n}.$
   Then:
  \begin{enumerate}
  \item $G_{n}=\overline{GL_{n}(2)}.{\cal Z}^{n}$;
  \item $G_{n}/{\cal Z}^{n}\simeq GL_{n}(2)=\mbox{\normalfont Aut}_{{\bf F}_{2}}\;V$ and 
  \item $G_{n}^{i}/{\cal Z}^{n}\simeq \mbox{\normalfont Out}(L_{i}) $.
  \end{enumerate}
  
	\end{proposition}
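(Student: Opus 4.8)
The plan is to build everything from the single homomorphism $\rho\colon G_n\to GL_n(2)$ induced by the action of $G_n$ on the quotient $V\cong{\cal F}_n/{\cal Z}({\cal F}_n)$. By Lemma \ref{lemma2.6} this quotient is precisely the elementary abelian $2$-group $V$ with basis the images of the generators $1,\dots,n$, so every $\varphi\in G_n$ descends to a linear map $\rho(\varphi)\in\mathrm{Aut}_{{\bf F}_2}V=GL_n(2)$. First I would observe that $\varphi$ lies in $\ker\rho$ exactly when $\varphi(i)\equiv i\pmod{{\cal Z}({\cal F}_n)}$ for every generator, i.e. $\varphi(i)=iz_i$ with $z_i\in{\cal Z}({\cal F}_n)$; hence $\ker\rho={\cal Z}^n$, so ${\cal Z}^n\trianglelefteq G_n$ and $G_n/{\cal Z}^n$ embeds in $GL_n(2)$. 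A short check — using that $\varphi\in{\cal Z}^n$ fixes every square, commutator and associator and therefore fixes ${\cal Z}({\cal F}_n)$ pointwise — shows $\varphi\mapsto(z_1,\dots,z_n)$ is an isomorphism ${\cal Z}^n\simeq{\cal Z}({\cal F}_n)^n$. At this point (1) and (2) reduce to proving that $\rho$ is surjective and is split by $\overline{GL_n(2)}$.

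The decisive step, and the main obstacle, is the following generation lemma: \emph{every ${\bf F}_2$-basis $w_1,\dots,w_n$ of $V$ generates ${\cal F}_n$ as a loop.} Granting this, for $A\in GL_n(2)$ I would use freeness of ${\cal F}_n$ (Theorem \ref{theorem2.7}) to extend $i\mapsto A(i)\in V$ to an endomorphism $\varphi_A$; since $\{A(i)\}$ is a basis of $V$ it generates ${\cal F}_n$, so $\varphi_A$ is onto, hence an automorphism because ${\cal F}_n$ is finite. Clearly $\varphi_A\in\overline{GL_n(2)}$ and $\rho(\varphi_A)=A$, giving surjectivity of $\rho$ and showing $\rho|_{\overline{GL_n(2)}}\colon\overline{GL_n(2)}\to GL_n(2)$ is an isomorphism (injectivity is immediate since $V\cap{\cal Z}({\cal F}_n)=0$). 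Then (2) follows, and for (1) I write an arbitrary $\varphi\in G_n$ as $\varphi=\varphi_A\cdot(\varphi_A^{-1}\varphi)$ with $A=\rho(\varphi)$, noting $\varphi_A^{-1}\varphi\in\ker\rho={\cal Z}^n$. To prove the generation lemma I would argue along the filtration ${\cal Z}({\cal F}_n)=\bar V\oplus W\oplus U$: by diassociativity of Moufang loops together with Proposition \ref{prop2.5}, the associator, commutator and square maps are respectively the canonical alternating map $V^{3}\to U=V\wedge V\wedge V$, the canonical alternating map $V^{2}\to W=V\wedge V$ modulo $U$, and the linear isomorphism $V\to\bar V$ modulo $W\oplus U$. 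Hence the associators $(w_a,w_b,w_c)$ span $U$, the commutators $[w_a,w_b]$ then span $W$, and the squares $w_k^{2}$ span $\bar V$; combined with the fact that $\langle w_1,\dots,w_n\rangle$ surjects onto $V$, this forces $\langle w_1,\dots,w_n\rangle={\cal F}_n$.

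For (3) set $T_i=T_\lambda$ with $L_i={\cal F}_n/T_i$, so that $G_n^i=\stab_{G_n}(T_i)$, and note ${\cal Z}^n\subseteq G_n^i$ is normal since it fixes ${\cal Z}({\cal F}_n)$, hence $T_i$, pointwise. Each $\varphi\in G_n^i$ induces $\hat\varphi\in\mathrm{Aut}(L_i)$ by $\hat\varphi(\bar x)=\overline{\varphi(x)}$ as in Proposition \ref{prop2.9}, and composing with the projection onto $\Out(L_i)$ yields $\Phi\colon G_n^i\to\Out(L_i)$. I would prove $\Phi$ is onto by lifting: given $\alpha\in\mathrm{Aut}(L_i)$, pick preimages $y_i\in{\cal F}_n$ of $\alpha(\pi(i))$ and extend $i\mapsto y_i$ to $\tilde\alpha\in\mathrm{End}({\cal F}_n)$ by freeness; since $\alpha$ is an automorphism the $\pi(y_i)$ generate $L_i$, so the images of $y_i$ form a basis of $L_i/Z(L_i)=V$, whence $\tilde\alpha$ is an automorphism by the generation lemma, stabilizes $T_i=\ker\pi$, and satisfies $\hat{\tilde\alpha}=\alpha$. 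Finally I would identify $\ker\Phi$ with ${\cal Z}^n$: unwinding $N(\mathrm{Aut}\,L_i)=\{\phi\mid\phi(x)=\pm x\}$, for $\varphi\in{\cal Z}^n$ and $x=\pi(\sigma w)$ one computes $\hat\varphi(x)=x\cdot\pi(z_\sigma)$ with $z_\sigma=\prod_{l}z_{i_l}$ and $\pi(z_\sigma)\in\{1,-1\}$, so $\hat\varphi\in N(\mathrm{Aut}\,L_i)$; conversely $\hat\varphi(x)=\pm x$ for all $x$ means $\hat\varphi$ acts trivially on $L_i/Z(L_i)=V$, i.e. $\rho(\varphi)=\mathrm{id}$ and $\varphi\in{\cal Z}^n$. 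This yields $G_n^i/{\cal Z}^n\simeq\Out(L_i)$. The only genuinely nontrivial ingredient is the generation lemma; the remainder is diagram chasing together with the finiteness of ${\cal F}_n$.
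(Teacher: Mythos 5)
Your proposal is correct, and its skeleton coincides with the paper's: the homomorphism $G_n\to GL_n(2)$ induced on ${\cal F}_n/{\cal Z}({\cal F}_n)\simeq V$ with kernel ${\cal Z}^n$, and for (3) the composite $G_n^i\to \mbox{\normalfont Aut}(L_i)\to \mbox{\normalfont Out}(L_i)$ whose kernel is again ${\cal Z}^n$ (your contrapositive kernel computation is literally the paper's). Where you genuinely diverge is in what carries the weight. The paper simply writes $\varphi(i)=\sigma_i z_i$ and asserts parts (1)--(2), and for surjectivity onto $\mbox{\normalfont Aut}(L_i)$ it cites Proposition \ref{prop2.9}; but the proof of that proposition constructs the lift $\tilde\sigma$ by freeness and never verifies that $\tilde\sigma$ is an automorphism of ${\cal F}_n$, nor does the paper anywhere prove that $\rho$ is onto or that $\overline{GL_n(2)}$ splits it. Your generation lemma --- any $n$-tuple whose images mod ${\cal Z}({\cal F}_n)$ form a basis of $V$ generates ${\cal F}_n$, proved layer by layer along $\bar V\oplus W\oplus U$ using Proposition \ref{prop2.5} --- is exactly the missing ingredient: combined with finiteness of ${\cal F}_n$ it makes every such lifted endomorphism surjective, hence bijective, simultaneously yielding surjectivity of $\rho$, the splitting in (1), and the surjectivity of $\pi_i$ that the paper outsources. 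So your route is the same theorem-architecture but self-contained, repairing gaps in both this proof and, implicitly, in Proposition \ref{prop2.9}. One cosmetic slip: you write $L_i/Z(L_i)=V$, but $Z(L_i)$ can strictly contain $\{\pm 1\}$ (e.g.\ for $C_2^4$, where $d$ is central); the quotient you actually need, and implicitly use, is $L_i/\pi({\cal Z}({\cal F}_n))=L_i/\{\pm 1\}\simeq V$, and with that substitution both places where this appears (the basis claim for the $y_i$ and the identification $\ker\Phi={\cal Z}^n$) go through unchanged, since $\hat\varphi(x)=\pm x$ for all $x$ precisely says $\hat\varphi$ is trivial on $L_i/\{\pm1\}$.
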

  \begin{proof}
  Let $\varphi \in G_{n}$, so $\varphi(i)=\sigma_{i}.z_{i}$, for $\sigma_{i}\in V=\{1,\dots,n\}$ and $z_{i}\in {\cal Z}$. Thus, for all $\varphi \in G_{n}$, there are $\{\sigma_{1},\dots,\sigma_{n}\}$ a corresponding basis of $V$ and $(z_{1},\dots,z_{n})\in {\cal Z}^{n}$. We have that ${\cal F}_{n}/{\cal Z}({\cal F}_{n})\simeq V=\{1,\dots,n\}$ is a vector ${\bf F}_{2}$-space and, for $\varphi \in G_{n}$, we have an automorphism $\tilde{\varphi}:{\cal F}_{n}/{\cal Z}({\cal F}_{n})\longrightarrow {\cal F}_{n}/{\cal Z}({\cal F}_{n})$ such that $i^{\tilde{\varphi}}=\sigma_{i}$. Then $\tilde{\varphi} \in GL(V)=GL_{n}(2)$.  
  
  We are going to show now that $G_{n}^{i}/{\cal Z}^{n}\simeq \mbox{\normalfont Out}(L_{i})$. In fact, let $\varphi \in G_{n}^{i}$, then $T_{\lambda}^{\varphi}=T_{\lambda}$, hence $\tilde{\varphi}:{\cal F}_{n}/T_{\lambda}\longrightarrow {\cal F}_{n}/T_{\lambda}$ is an isomorphism of code loops. The homomorphism $\pi_{i}:G_{n}^{i}\longrightarrow \mbox{\normalfont Aut}(L_{i})$ such that $\pi_{i}(\varphi)=\tilde{\varphi}$, is surjective (by the Proposition \ref{prop2.9}) with kernel $K_{i}=\{\tau \in G_{n}^{i}|\tau(x_{j})=x_{j}v_{j}, v_{j}\in T_{\lambda}\}$.
	
	Consider also the surjective homomorphism $\psi_{i}: \mbox{\normalfont Aut}(L_{i})\longrightarrow \mbox{\normalfont Out}(L_{i})$. Then  $\pi=\psi_{i}\circ \pi_{i}: G_{n}^{i}\longrightarrow \mbox{\normalfont Out}(L_{i})$ is a surjective homomorphism. We have ${\cal Z}^{n}\subseteq ker(\pi)$.
Now, let $\varphi \in ker(\pi)$. If $\varphi \notin {\cal Z}^{n}$, there  is $i^{\varphi}\neq iz_{i}$, for all $ z_{i}\in {\cal Z}$, then for $\pi(\varphi)\in \mbox{\normalfont Out}(L_{i})$, we have \linebreak $\pi(\varphi):i\longmapsto \pm i^{\varphi} \neq \pm i$ $(i\; mod(T_{\lambda}))$, hence $\pi \varphi \neq 1$ in $\mbox{\normalfont Out}(L_{i})$. Thus, $ker(\pi)={\cal Z}^{n}$ and, therefore, $G_{n}^{i}/{\cal Z}^{n}\simeq \mbox{\normalfont Out}(L_{i})$. 
  \end{proof}
  
	We denote $G_{n}^{i}/{\cal Z}^{n}$ by $\mbox{\normalfont Stab}(\lambda)$.
  
  \begin{definition}
A $3-$space is a ${\bf F}_{2}-$space $V$ with an antisymmetric trilinear form $(\;,\; ,\;)$ with values in ${\bf F}_{2}$.

A $2-$space is a $3-$space $V$ with a bilinear map $[\;,\;]:V \times V \longrightarrow {\bf F}_{2}$ such that $[v,wu]=[wu,v]=[v,w][v,u](v,w,u).$
\end{definition}

For a given code loop $L$, we can associate a $2$-space $V=L/{\cal Z}(L)$ with a $3$-form induced by the associator and the correspondent antisymmetric aplication induced by the commutator.

Let $V$ be a $3-$space. By definition, the nucleus of $V$ is the \linebreak subspace $N_{3}(V)=\left\{v \in V | (v,V,V)=1\right\}.$ Similarly, for a $2-$space $V$, we define \linebreak $N_{2}(V)=\left\{v \in V | (v,V,V)=[v,V]=1\right\}.$

\begin{definition}
Let $L_{1}$ and $L_{2}$ be code loops. We define the product of $L_{1}$ and $L_{2}$ as a code loop $L_{1}\ast L_{2}=\dfrac{L_{1}\times L_{2}}{D}$, where $D=\{1\times 1, (-1)\times(-1)\}$ is a diagonal of the center of $L_{1}\times L_{2}$. 

Let $L$ be an other code loop, we will write  $L_{1}\sim_{L}L_{2}$ if $L\ast L_{1} \simeq L\ast L_{2}$ and we will say that $L_{1}$ and $L_{2}$ are $L$-equivalents.

\end{definition}

For the following, $\mathbb{Z}_{m}$ is the (additive) group of integers modulo m.

\begin{lemma}\label{lemma2.15}
Two code loops $L_{1}$ and $L_{2}$ are isomorphic as $2$-spaces if and only if $L_{1}\sim_{\mathbb{Z}_{4}}~L_{2}$.

\begin{proof}
Let $L_{1}$ and $L_{2}$  be $\mathbb{Z}_{4}$-equivalents. Therefore, there exists an isomorphism \linebreak $\sigma: \mathbb{Z}_{4}\ast L_{1}\longrightarrow \mathbb{Z}_{4}\ast L_{2}$. We define a linear map $\tau:L_{1}\longrightarrow L_{2}$ by $\tau(v)=w \in L_{2}$ if $\sigma(v)=aw$, where $a \in \mathbb{Z}_{4}$. Since by definition $[a,L_{2}]=(a,L_{2},L_{2})=1$, then $\tau$ is an isomorphism of $L_{1}$ and $L_{2}$ as $2$-spaces.

Conversely, if $\tau:L_{1}\longrightarrow L_{2}$ is an isomorphism of code loops as $2$-spaces, so the application $\sigma: \mathbb{Z}_{4}\ast L_{1} \longrightarrow \mathbb{Z}_{4}\ast L_{2}$, where $\sigma(a\times v)=b\times \tau(v)$ and $b=a$ if $v^{2}=\tau(v)^{2}$, $b=ac$ if $v^{2}\neq \tau(v)^{2}$, $a,b,c \in \mathbb{Z}_{4}$, $c^{2}=-1$,  is an isomorphism.
\end{proof}

\end{lemma}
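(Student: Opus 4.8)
The plan is to exploit the single structural fact that makes the operation $L\mapsto \mathbb{Z}_{4}\ast L$ useful: multiplying by $\mathbb{Z}_{4}$ adjoins to the centre a square root of the nonidentity central element while changing neither the commutator nor the associator. Writing $a$ for the image in $\mathbb{Z}_{4}\ast L$ of a generator of $\mathbb{Z}_{4}$, the element $a$ is central with $a^{2}=-1$, one has ${\cal Z}(\mathbb{Z}_{4}\ast L)=\langle a\rangle\cong \mathbb{Z}_{4}$, and the quotient $(\mathbb{Z}_{4}\ast L)/\langle a\rangle$ is canonically $L/{\cal Z}(L)$ with the \emph{same} commutator and associator as $L$, since $a$ commutes and associates trivially. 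The crucial point is that each class $\bar v$ of the $2$-space has two lifts $v$ and $av$ with $(av)^{2}=a^{2}v^{2}=-v^{2}$, so both square types occur; this is exactly the freedom that will absorb any discrepancy between the square maps of $L_{1}$ and $L_{2}$. Thus the heuristic is that $\mathbb{Z}_{4}\ast L$ remembers the $2$-space of $L$ and forgets $v\mapsto v^{2}$.

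I would prove the direction $L_{1}\sim_{\mathbb{Z}_{4}}L_{2}\Rightarrow L_{1}\cong L_{2}$ as $2$-spaces first, since it is the easy half. An isomorphism $\sigma\colon \mathbb{Z}_{4}\ast L_{1}\to \mathbb{Z}_{4}\ast L_{2}$ carries centre to centre and hence descends to the quotients by the centres; by the computation above these quotients are the $2$-spaces of $L_{1}$ and $L_{2}$, and any loop isomorphism preserves commutators and associators, so the induced map is an isomorphism of $2$-spaces. (This is the map $\tau$ obtained by composing $\sigma$ with the projection of $L_{2}$ modulo $\langle a\rangle$.)

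For the converse I would take the explicit map of the statement: lift the $2$-space isomorphism to a bijection $\tau\colon L_{1}\to L_{2}$ commuting with the central involution, and set $\sigma(a\times v)=b\times\tau(v)$ with $b=a$ or $b=ac$ $(c^{2}=-1)$ according as $v^{2}=\tau(v)^{2}$ or not; equivalently $\sigma(a\times v)=(a+\epsilon(v))\times\tau(v)$, where $\epsilon(v)\in\{0,1\}$ records the discrepancy of the two squares. First I would check that $\sigma$ respects the diagonal $D$, so that it descends to $\mathbb{Z}_{4}\ast L_{1}\to\mathbb{Z}_{4}\ast L_{2}$, and that by construction it matches square types. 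The substantive step is multiplicativity $\sigma(xy)=\sigma(x)\sigma(y)$, which after cancelling the $L$-components reduces to a single congruence in the $\mathbb{Z}_{4}$-coordinate, namely $\epsilon(vv')+2\delta\equiv\epsilon(v)+\epsilon(v')\pmod 4$, where $\delta\in\{0,1\}$ detects whether $\tau(vv')$ differs from $\tau(v)\tau(v')$ by the central involution. Here Proposition \ref{prop2.5}(3), $(vv')^{2}=v^{2}v'^{2}[v,v']$, together with the preservation of commutators by the $2$-space isomorphism, forces the two commutator contributions to cancel mod $2$, so that $\epsilon$ is in fact \emph{additive} on $V$ and $\epsilon(vv')\equiv\epsilon(v)+\epsilon(v')\pmod 2$; the remaining carry into $\mathbb{Z}_{4}$ is then exactly the sign supplied by $a$.

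The hard part, and the step I would treat most carefully, is this last piece of bookkeeping: one must verify that the multiplicative sign $\eta(v,v')$ defined by $\tau(vv')=\eta(v,v')\,\tau(v)\tau(v')$ agrees with the required correction $(-1)^{\epsilon(v)\epsilon(v')}$. Both are symmetric with the same diagonal $\eta(v,v)=(-1)^{\epsilon(v)}$, and (using that $\tau$ preserves associators, so the non-associativity terms cancel) their ratio is a symmetric $2$-cocycle with trivial diagonal; such a cocycle is a coboundary, so after replacing $\tau$ by a sign-adjusted lift the two coincide and $\sigma$ becomes a genuine homomorphism. As $\sigma$ is visibly bijective, it is the desired isomorphism $\mathbb{Z}_{4}\ast L_{1}\cong\mathbb{Z}_{4}\ast L_{2}$, completing the converse. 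I expect that managing this cocycle/lift adjustment in the presence of non-associativity, rather than the rest of the argument, is where all the care is needed.
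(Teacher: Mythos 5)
Your proposal is correct, and in both directions it uses exactly the two maps the paper uses: the projection $\tau$ obtained from $\sigma(v)=aw$ in the forward direction, and the sign-corrected lift $\sigma(a\times v)=b\times\tau(v)$, $b\in\{a,ac\}$, in the converse. The genuine difference is that the paper simply asserts that this $\sigma$ ``is an isomorphism,'' while you carry out the verification, and your extra step is not optional pedantry: for an \emph{arbitrary} sign-lift of a $2$-space isomorphism the paper's formula fails to be multiplicative. (Take $L_{1}=L_{2}$ elementary abelian of rank $2$ over the centre, $\tau$ the identity on the $2$-space, and the lift that flips the sign of one product $e_{1}e_{2}$ only; then $\sigma((1\times e_{1})(1\times e_{2}))=1\times(-e_{1}e_{2})\neq 1\times e_{1}e_{2}=\sigma(1\times e_{1})\sigma(1\times e_{2})$ modulo the diagonal $D$.) Your bookkeeping is sound: writing $\tau(v)^{2}=(-1)^{\epsilon(v)}v^{2}$, Proposition \ref{prop2.5}(3) together with preservation of commutators gives $\epsilon(vv')\equiv\epsilon(v)+\epsilon(v')\pmod 2$, and since $c^{\epsilon(v)+\epsilon(v')}=c^{\epsilon(vv')}(-1)^{\epsilon(v)\epsilon(v')}$ in $\mathbb{Z}_{4}$, multiplicativity of $\sigma$ reduces exactly to $\eta(v,v')=(-1)^{\epsilon(v)\epsilon(v')}$, where $\tau(vv')=\eta(v,v')\tau(v)\tau(v')$. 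Your cocycle argument closes this: $\eta$ is a symmetric $2$-cocycle on the $2$-space (symmetry and the cocycle identity use preservation of commutators and associators by $\tau$, so the central associator signs cancel), $(v,v')\mapsto\epsilon(v)\epsilon(v')$ is a bilinear hence cocycle term with the same diagonal $(-1)^{\epsilon(v)}$, and a symmetric $\{\pm 1\}$-valued $2$-cocycle with trivial diagonal on an $\mathbf{F}_{2}$-space defines an elementary abelian extension, which splits, so the discrepancy is a coboundary and is absorbed by re-choosing the signs of the lift (this re-choice does not alter $\epsilon$). In short: same skeleton as the paper, but your proof supplies the multiplicativity argument the paper omits, and in doing so repairs a real gap, since the lift must be normalized before the paper's $\sigma$ is a homomorphism.
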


\section{Automorphisms of Code Loops}

Classification of all code loops untill rank 4 using the definition of characteristic vector has been done in \cite{1}.
In this section we will illustrate the notions introduced in this case.

\subsection{Code loops of rank 3}

Let $L$ be a nonassociative code loop of rank $3$ with generators $a,b,c.$ Then\linebreak $(a,b,c)=-1$. We associate to $L$ the characteristic  vector  $\lambda(L)$, defined by\linebreak $\lambda(L) = (\lambda_1,...,\lambda_6)$ where \; $a^{2}=(-1)^{\lambda_1}$,$b^{2}=(-1)^{\lambda_2}$, $c^{2}=(-1)^{\lambda_3}$,$\left[a,b\right]=(-1)^{\lambda_4}$, $\left[a,c\right]=(-1)^{\lambda_5}$, $\left[b,c\right]=(-1)^{\lambda_6}.$

\begin{theorem}\label{theorem3.1}
Consider $C_{1}^{3},...,C_{5}^{3}$ the code loops with the following characteristic vectors:
\begin{eqnarray*}
\begin{array}{lllll}
\lambda(C_{1}^{3})=(1,1,1,1,1,1), &&\lambda(C_{2}^{3})=(0,0,0,0,0,0),&&\lambda(C_{3}^{3})=(0,0,0,1,1,1),\\
\lambda(C_{4}^{3})=(1,1,0,0,0,0),&&\lambda(C_{5}^{3})=(1,0,0,0,0,0).&&\\
\end{array}
\end{eqnarray*}
Then any two loops from the list $\left\{C_{1}^{3},...,C_{5}^{3}\right\}$  are not isomorphic and all nonassociative
code loop of rank $3$ is isomorphic to one of this list.
\end{theorem}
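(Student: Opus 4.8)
The plan is to use the dictionary, already established in Section~2, between isomorphism classes of nonassociative rank-$n$ code loops and orbits of characteristic vectors. Since translating each generator by a central element leaves its square, its commutators and its associators unchanged, the subgroup $\mathcal{Z}^n$ acts trivially on a characteristic vector, so the relevant action on $\lambda$ is that of $G_n/\mathcal{Z}^n\cong GL_n(2)$; by the bijection $T\mapsto L(T)$, two code loops are isomorphic exactly when their characteristic vectors lie in one $GL_3(2)$-orbit. Fixing $\lambda_{123}=1$ (automatic in rank~$3$ for a nonassociative loop) there are exactly $2^6=64$ characteristic vectors, so it suffices to show that these split into exactly five $GL_3(2)$-orbits, represented by the five listed loops.

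The key step I would take is to repackage a characteristic vector as a single function. Writing $V=\mathbb{F}_2^3$ with coordinates $(x_1,x_2,x_3)$ dual to the generators $a,b,c$, define $q\colon V\to\mathbb{F}_2$ by $v^2=(-1)^{q(v)}$. The relations of Proposition~\ref{prop2.5} say precisely that $q(v+w)+q(v)+q(w)$ is the commutator exponent and that its further difference is the associator exponent; equivalently, $q$ is the multilinear polynomial
\[
q=\lambda_1 x_1+\lambda_2 x_2+\lambda_3 x_3+\lambda_4 x_1x_2+\lambda_5 x_1x_3+\lambda_6 x_2x_3+x_1x_2x_3 .
\]
Thus $\lambda$ is recovered from the support $S=q^{-1}(1)\subseteq V\setminus\{0\}$, and $GL_3(2)$ acts on characteristic vectors by linear substitution in $q$, that is, by permuting $V\setminus\{0\}$ and hence the sets $S$. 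By M\"obius inversion the coefficient of $x_1x_2x_3$ equals $\sum_v q(v)=|S|\bmod 2$, so the nonassociativity condition $\lambda_{123}=1$ is equivalent to $|S|$ being odd, and the $64$ characteristic vectors correspond bijectively to the odd-cardinality subsets $S$ of the seven nonzero vectors.

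Now $GL_3(2)$ acts on $V\setminus\{0\}$ as the full automorphism group (order $168$) of the Fano plane $\mathrm{PG}(2,2)$, and this action is $2$-transitive on the seven points. Classifying odd subsets $S$ is then a short orbit count. The full set ($|S|=7$, orbit size $1$) gives $C_1^3$ and a singleton ($|S|=1$, orbit size $7$) gives $C_2^3$. A set of size $5$ is the complement of a pair of points, so by $2$-transitivity these form one orbit of size $21$, giving $C_4^3$. Finally size $3$ splits into the collinear triples, namely the seven lines $v_1+v_2+v_3=0$ (orbit size $7$, giving $C_3^3$), and the non-collinear triples, namely the $28$ unordered bases, on which $GL_3(2)$ is transitive (orbit size $28$, giving $C_5^3$). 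The orbit sizes $1+7+21+7+28=64$ exhaust all characteristic vectors.

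This settles both assertions simultaneously. The five loops are pairwise non-isomorphic because they lie in distinct orbits: the invariant $|S|$ (the number of nonzero $v\in V$ with $v^2=-1$) takes the values $7,1,3,5,3$, separating everything except the pair $C_3^3,C_5^3$, and those are separated by whether the weight-$3$ support is collinear (a line, $C_3^3$) or not (a basis, $C_5^3$). Completeness holds because the five orbits already account for all $64$ vectors. The main obstacle is the bookkeeping of the second paragraph, namely checking that $\lambda\leftrightarrow q\leftrightarrow S$ is a bijection intertwining the two $GL_3(2)$-actions; once that is in place the classification reduces to standard transitivity facts for the Fano plane.
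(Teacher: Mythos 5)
Your proof is correct, and it takes a genuinely different route from the paper's. The paper argues directly on generators: starting from $(a,b,c)$ with $(a,b,c)=-1$ it splits into the cases $[a,b]=1$ versus $[a,b]=[a,c]=[b,c]=-1$, and in each case performs explicit changes of generators (replacing $c$ by $bc$ and using $[a,bc]=[a,b][a,c](a,b,c)$; showing $(1,1,1,0,0,0)$ and $(1,0,0,0,0,0)$ give isomorphic loops via the new generating set $\{a,ab,ac\}$) to whittle the possible characteristic vectors down to the five listed, with the pairwise non-isomorphism and orbit sizes then verified separately in Proposition~\ref{prop3.2} by tabulating squares and commutators of all products of generators. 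You instead package the whole characteristic vector into the cubic Boolean function $q$ with $v^{2}=(-1)^{q(v)}$ --- Proposition~\ref{prop2.5} being precisely the statement that the commutator and associator exponents are the first and second polarizations of $q$, and the nonzero alternating trilinear form on $\mathbb{F}_{2}^{3}$ being forced to be the determinant, which makes $\lambda_{123}=1$ automatic --- and then classify supports $S=q^{-1}(1)$, i.e.\ odd subsets of the Fano plane, under the $2$-transitive action of $GL_{3}(2)$. This buys both halves of the theorem at once: the orbit sizes $1+7+21+7+28=64$ give completeness, while $|S|$ together with the collinear-versus-basis dichotomy for $|S|=3$ separates the representatives (I checked the representative computations: for $\lambda(C_{3}^{3})=(000111)$ the support is the line $\{e_{1}+e_{2},\,e_{1}+e_{3},\,e_{2}+e_{3}\}$, for $\lambda(C_{5}^{3})=(100000)$ it is the basis $\{e_{1},\,e_{1}+e_{2},\,e_{1}+e_{3}\}$, and the values $7,1,3,5,3$ of $|S|$ are as you state). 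As a byproduct your argument recovers Proposition~\ref{prop3.2} conceptually, without tables, and even explains the stabilizer orders, e.g.\ $|\mbox{\normalfont Out}\,C_{5}^{3}|=6$ as the $S_{3}$ fixing an unordered basis. The cost is the reliance on the Section~2 dictionary (isomorphism classes versus orbits of $\lambda$ under $G_{3}/{\cal Z}^{3}\simeq GL_{3}(2)$, which requires noting that ${\cal Z}^{3}$ fixes squares, commutators and associators of generators and hence acts trivially on $\lambda$), plus the bookkeeping you correctly flag: that $\lambda\leftrightarrow q\leftrightarrow S$ is a bijection intertwining the actions, which follows from the uniqueness of the multilinear representation of a Boolean function and the computation $(ab)^{2}=a^{2}b^{2}[a,b]$, $[ab,c]=[a,c][b,c](a,b,c)$ matching the coefficients of $q$. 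Both steps are routine, so the argument is complete.
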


\begin{proof}
Let $a,b,c$ be the generators of the nonassociative code loop $L$ of rank $3$. Thus $(a,b,c)=-1$. There are two possibilities:
\begin{enumerate}
	\item $\left[a,b\right] = 1;$
	\item $\left[a,b\right] = \left[a,c\right] = \left[b,c\right] = -1.$
\end{enumerate}

In the first case, we can choose generators such that $\left[a,b\right] = \left[a,c\right] = \left[b,c\right] = 1$. In fact, if $\left[a,c\right] = -1$,  we can take $x=bc$ and we will have $\left[a,x\right] = \left[a,b\right]\left[a,c\right](a,b,c) = 1,$ (see Theorem 2, \cite{4}). 

If $\left[a,b\right]=\left[a,c\right]=1$ and since $\left[b,c\right]=-1$, we obtain $\left[a,ab\right]=\left[a,c\right]=\left[ab,c\right]=1.$

Fixed the generators $a,b,c$ such that $\left[a,b\right] = \left[a,c\right] = \left[b,c\right] = 1$, we will have for $\lambda(L)$ four possibilities: $(0,0,0,0,0,0),(1,1,0,0,0,0),(1,0,0,0,0,0),(1,1,1,0,0,0).$

Notice that the loops that correspond to the last two characteristic vectors are isomorphics. In fact, let $L_1$ be the code loop with generator set $ X = \left\{a,b,c\right\}$ and with corresponding characteristic vector $\lambda_{X}(L_1) = (1,1,1,0,0,0)$. Then, for the generator set $Y = \left\{a,ab,ac\right\}$, we have $a^{2}=-1, (ab)^{2}=1, (ac)^{2}=1$, that is, $\lambda_{Y}(L_1) = (1,0,0,0,0,0)$.

In the second case, there are two possibilities:
\begin{description}
	\item[i)]  $a^{2}=b^{2}=c^{2}=-1$: Here we have $\lambda(L)=(1,1,1,1,1,1)$.
	\item[ii)] $a^{2}=1:$ If we assume $b^{2}=-1$ or $c^{2}=-1$, we will have  $(ac)^{2}=(ab)^{2}=1$. Hence we can choose generators $a,b,c$ such that $a^{2}=b^{2}=c^{2}=1$.\\
	Therefore, $\lambda(L)=(0,0,0,1,1,1).$
\end{description}

\end{proof}

\begin{proposition}\label{prop3.2}

Let $O^{3}_{1},...,O^{3}_{5}$ be the orbits of the characteristic vectors associated to code loops $C_{1}^{3},...,C_{5}^{3}$, respectively. Then

$\begin{array}{l} O^{3}_1 = \{(111111)\};\end{array} $

$\begin{array}{l} O^{3}_2 = \{(000000),(000001),(000010),(000100),(001011),(010101),(100110)\};\end{array} $

$\begin{array}{l} O^{3}_3 = \{(000111),(001111),(010111),(011111),(100111),(101111),(110111)\};\end{array} $

 $\begin{array}{l} \begin{aligned}
  O^{3}_4 = \{&(001100),(010010),(011000),(011001),(011011),(011101),(100001),\\
&(101000),(101010),(101011),(101110),(110000),(110100),(110101),\\
&(110110),(111001),(111010),(111011),(111100),(111101),(111110) \};\\
 \end{aligned} \end{array} $

 $ \begin{array}{l} \begin{aligned}
 O^{3}_5 =\{&(000011),(000101),(000110),(001000),(001001),(001010),(001101),\\ &(001110),(010000),(010001),(010011),(010100),(010110),(011010),\\
&(011100),(011110),(100000),(100010),(100011),(100100),(100101),\\
&(101001),(101100),(101101),(110001),(110010),(110011),(111000) \}.\\ 
\end{aligned} \end{array}  $

We note that $|O_1^{3}| = 1, |O_2^{3}|=|O_3^{3}|=7, |O_4^{3}|=21, |O_5^{3}|=28$ and $\displaystyle \sum_{i=1}^{5}|O_i^{3}|=64=2^{6}.$
Therefore, $|\mbox{\normalfont Out} C_{1}^{3}|=|GL_{3}(2)|=168 ,$
          $|\mbox{\normalfont Out} C_{2}^{3}|=|\mbox{\normalfont Out} C_{3}^{3}|=24,$
          $|\mbox{\normalfont Out} C_{4}^{3}|=8$ and $|\mbox{\normalfont Out} C_{3}^{3}|=6.$
\end{proposition}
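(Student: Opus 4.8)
The plan is to reduce the twisted $GL_{3}(2)$-action on the $64$ characteristic vectors (with $\lambda_{123}=1$ fixed throughout) to the ordinary permutation action of $GL_{3}(2)$ on subsets of $V=\langle a,b,c\rangle$, and then to read off the five orbit sizes combinatorially. The key observation is that the entire characteristic vector is encoded by the single squaring map $q\colon V\to{\bf F}_{2}$, $q(v)=v^{2}$. Indeed, Proposition \ref{prop2.5}(3) gives $(vw)^{2}=v^{2}w^{2}[v,w]$, i.e. $q(v+w)=q(v)+q(w)+B(v,w)$ with $B(v,w)=[v,w]$, so the commutator form is recovered as the polar form $B(v,w)=q(v+w)+q(v)+q(w)$; and Proposition \ref{prop2.5}(1) shows that the failure of $B$ to be bilinear is exactly the fixed nonzero associator form. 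Thus $\lambda_{1},\dots,\lambda_{6}$ together with $\lambda_{123}=1$ determine, and are determined by, the values of $q$ on the seven nonzero vectors of $V$; characteristic vectors of nonassociative rank-$3$ code loops correspond bijectively to those maps $q$ whose induced associator is nonzero.

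Second, I would make the action explicit. A basis change $g\in GL_{3}(2)\simeq G_{3}/{\cal Z}^{3}$ replaces the generators by $g(a),g(b),g(c)$, so it sends $q$ to $q\circ g$, and hence sends the subset $S=q^{-1}(1)\subseteq V\setminus\{0\}$ to $g^{-1}(S)$. Therefore the $GL_{3}(2)$-orbit of a characteristic vector is precisely the $GL_{3}(2)$-orbit of the subset $S$ among the seven points of the Fano plane $V\setminus\{0\}$, and each orbit has size equal to the number of subsets in that orbit. It then remains to identify $S$ for each representative: using that $B$ is the polar form of $q$ and $(a,b,c)=-1$, a short computation extends each $\lambda(C_{i}^{3})$ to $q$ on all seven points and yields, for $C_{1}^{3}$, all seven points; for $C_{2}^{3}$, a single point; for $C_{3}^{3}$, the three points of a line (a $2$-dimensional subspace); for $C_{4}^{3}$, the complement of a pair of points; and for $C_{5}^{3}$, a non-collinear triple (a basis).

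The orbit sizes are now counts of Fano configurations under $GL_{3}(2)$: the full point set ($1$), single points ($7$), lines ($7$), pairs of points and hence their five-point complements ($\binom{7}{2}=21$), and non-collinear triples, equivalently unordered bases ($\binom{7}{3}-7=28=168/6$). These five configuration types are pairwise non-conjugate, and by Theorem \ref{theorem3.1} there are exactly five orbits, so they account for everything; the check $1+7+7+21+28=64=2^{6}$ confirms the partition. Finally, the Proposition establishing $G_{3}^{i}/{\cal Z}^{3}\simeq\Out(L_{i})$ identifies $\stab(\lambda)$ with $\Out(L_{i})$ inside $GL_{3}(2)$, so orbit--stabiliser gives $|\Out(C_{i}^{3})|=|GL_{3}(2)|/|O_{i}^{3}|=168/|O_{i}^{3}|$, that is $168,24,24,8,6$. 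The main obstacle is the first step: recognising that the commutator and associator are forced by the squaring map, which collapses the nonlinear action into the linear permutation action on the Fano plane; once this reduction is in place, the rest is elementary $GL_{3}(2)$-combinatorics.
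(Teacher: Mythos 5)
Your proposal is correct, but it takes a genuinely different route from the paper's. The paper argues by brute-force enumeration: it fixes generators $(a,b,c)$, tabulates the squares and commutators of all seven nonzero products (Tables \ref{tabqccaso3} and \ref{tabqccaso3-2}), and for each representative $\lambda(C_{i}^{3})$ lists by hand every admissible image triple $(x,y,z)$ of the generators, thereby writing out each orbit vector by vector; the orders of $\Out C_{i}^{3}$ then follow from the orbit sizes exactly as in your last step. You instead collapse the twisted action to Fano-plane combinatorics: Proposition \ref{prop2.5} shows the commutator is the polar form of the squaring map $q$ and the associator its trilinear defect, and on a $3$-dimensional space a nonzero alternating trilinear form must be the determinant, so each characteristic vector with $\lambda_{123}=1$ is faithfully encoded by the subset $S=q^{-1}(1)$ of the seven points, equivariantly for the natural $GL_{3}(2)$-action; your identifications of the five representatives (full point set, single point, line, complement of a pair, unordered basis) are all correct, as are the counts $1,7,7,21,28$. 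Your exhaustion step is in fact self-contained even without Theorem \ref{theorem3.1}: the third-derivative map from functions vanishing at $0$ onto $\Lambda^{3}V^{*}$ is linear and surjective, so exactly $2^{7}/2=64$ subsets are admissible, and your five invariant, pairwise non-conjugate orbits already sum to $64$. What each approach buys: the paper's tables directly produce the explicit lists of vectors asserted in the Proposition, which your argument yields only after mechanically translating each subset back through the dictionary $\lambda_{i}=q(x_{i})$, $\lambda_{ij}=q(x_{i})+q(x_{j})+q(x_{i}+x_{j})$ --- a routine step you should at least acknowledge, since the statement claims the lists themselves; in exchange, your argument is far less error-prone, explains conceptually why the orbit sizes are the familiar Fano-configuration counts, and is the standard ``code loops as cubic forms'' viewpoint, which scales to rank $4$ where hand enumeration becomes painful. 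One cosmetic point: your orders $168,24,24,8,6$ are what the statement intends; its final ``$|\Out C_{3}^{3}|=6$'' is evidently a typo for $C_{5}^{3}$.
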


\begin{proof}
Let $(a,b,c)$ be a fixed set of generators for each code loop $C_{i}^{3}$, $i=1,\dots,5$. We consider the characteristic vector $\lambda(C_{i}^{3})$ as representative of each orbit $O_{i}^{3}$, $i=i,\dots,5$. To find all characteristic vectors belonging to the orbit of each representative, we have to find the different set of generators formed from the set of generators $(a,b,c)$ fixed. In the Table \ref{tabqccaso3}, we present all the values of squares and commutators obtained from each characteristic vector, that is, the squares of each generator element and the commutator between each pair of generators elements. Each column $i$ corresponds to a code loop $C_{i}^{3}$, $i=1,\dots,5$.
{\scriptsize \begin{table*}[!h]
	\centering
			\caption{Squares and Commutators from fixed generators $a,b,c$ for each $C_{i}^{3}$ }
		\label{tabqccaso3}
		\begin{tabular}{|r||r|r|r|r|r|}
			\hline
        & $1$  & $2$ & $3$  & $4$ & $5$   \\ \hline \hline
$a^2$   & $-1$ & $1$ & $1$  & $-1$ & $-1$    \\ \hline
$b^2$   & $-1$ & $1$ & $1$  & $-1$ & $1$     \\ \hline
$c^2$   & $-1$ & $1$ & $1$  & $1$ & $1$     \\ \hline
$[a,b]$ & $-1$ & $1$ & $-1$ & $1$ & $1$    \\ \hline
$[a,c]$ & $-1$ & $1$ & $-1$ & $1$ & $1$   \\ \hline
$[b,c]$ & $-1$ & $1$ & $-1$ & $1$ & $1$   \\ \hline

		\end{tabular}
\end{table*}}

 Let $\varphi \in GL_{3}(2)$ an automorphism of $ C_{i}^{3}$. Then we find a new set of generators of $C_{i}^{3}$, denoted by $(x,y,z)$, that is, $(a^{\varphi}=x, b^{\varphi}=y, c^{\varphi}=z)$. We see in the Table  \ref{tabqccaso3-2} all the possibilities for $x,y$ and $z$, and the squares of $x, y, z$ and the commutators between   each pair of elements. 
{\scriptsize \begin{table}[!h]
\centering
\caption{Squares and Commutators of $x,y$ and $z$}
\label{tabqccaso3-2}
\begin{tabular}{cc}
\begin{tabular}{|l|r|r|r|r|r|}
			\hline
          & $1$  & $2$ & $3$ & $4$ & $5$   \\ \hline 
$(ab)^2$  & $-1$ & $1$ & $-1$ & $1$ & $-1$  \\ 
$(ac)^2$  & $-1$ & $1$ & $-1$ & $-1$ & $-1$   \\ 
$(bc)^2$  & $-1$ & $1$ & $-1$ & $-1$ & $1$  \\ 
$(abc)^2$ & $-1$ & $-1$ & $1$ & $-1$ & $1$    \\ 
$[a,ab]$  & $-1$ & $1$ & $-1$ & $1$ & $1$    \\ 
$[a,ac]$  & $-1$ & $1$ & $-1$ & $1$ & $1$    \\ 
$[a,bc]$  & $-1$ & $-1$ & $-1$ & $-1$ & $-1$    \\ 
$[a,abc]$ & $-1$ & $-1$ & $-1$ & $-1$ & $-1$    \\ 
$[b,ab]$  & $-1$ & $1$ & $-1$ & $1$ & $1$    \\ 
$[b,ac]$  & $-1$ & $-1$ & $-1$ & $-1$ & $-1$    \\ 
$[b,bc]$  & $-1$ & $1$ & $-1$ & $1$ & $1$    \\ \hline
		\end{tabular} & \begin{tabular}{|l|r|r|r|r|r|}
			\hline
          & $1$  & $2$ & $3$ & $4$ & $5$   \\ \hline 
$[b,abc]$ & $-1$ & $-1$ & $-1$ & $-1$ & $-1$    \\ 
$[c,ab]$  & $-1$ & $-1$ & $-1$ & $-1$ & $-1$    \\
$[c,ac]$  & $-1$ & $1$ & $-1$ & $1$ & $1$    \\ 
$[c,bc]$  & $-1$ & $-1$ & $1$ & $1$ & $1$    \\ 
$[c,abc]$ & $-1$ & $1$ & $-1$ & $-1$ & $-1$    \\ 
$[ab,ac]$ & $-1$ & $1$ & $-1$ & $1$ & $1$    \\ 
$[ab,bc]$ & $-1$ & $1$ & $-1$ & $1$ & $1$    \\ 
$[ab,abc]$& $-1$ & $-1$ & $-1$ & $-1$ & $-1$    \\ 
$[ac,bc]$ & $-1$ & $1$ & $1$ & $1$ & $1$    \\
$[ac,abc]$& $-1$ & $-1$ & $-1$ & $-1$ & $-1$    \\ 
$[bc,abc]$& $-1$ & $-1$ & $-1$ & $-1$ & $-1$    \\ \hline

		\end{tabular}
\end{tabular}
\end{table} }

In the first orbit we have only the vector $\lambda(C_{1}^{3})=(111111)$, because for any $x,y,z$ as above, we always have $x^{2}=y^{2}=z^{2}=-1$ and $[x,y]=[x,z]=[y,z]=-1$.

To determine the vectors of the orbit $O_{2}^{3}$, we observe from the Table \ref{tabqccaso3-2} that the set of generators has the following possibilities: $(x,y,z)=(a,b,c)$, $(a,b,ac)$, $(a,b,bc)$, $(a,bc,b)$, $(abc,ac,bc)$, $(ac,abc,bc)$ and $(ac,bc,abc)$, which corresponds, respectively, to the characteristic vectors  $(000000)$, $(000001)$, $(000010)$, $(000100)$, $(100110)$, $(010101)$ and $(001011)$. All these vectors  form the orbit $O_{2}^{3}$. In the same way, we obtain the vectors of $O_{3}^{3}$. The possibilities for $(x,y,z)$, in this case, are $(a,b,c)$, $(ab,a,c)$, $(a,ab,c)$, $(a,c,ab)$, $(ab,ac,a)$, $(ab,a,ac)$ and $(a,ab,ac)$, with characteristic vectors respectively given by $(000111)$, $(100111)$, $(010111)$, $(001111)$, $(110111)$, $(101111)$ and $(011111)$. Analogously, we obtain all the vectors of the orbits $O_{4}^{3}$ and $O_{5}^{3}$. 

\end{proof}

For the following statements we use the notation $GR\{x_{1},\dots,x_{n}\}$ to denote the group generated by the elements $x_{1},\dots,x_{n}$. Here, $D_{n}$ denotes the dihedral group of order $n$ and $S_{n}$ is the symmetric group on $n$ letters.

\begin{proposition}\label{prop3.3}
In the notation above, we have:
\begin{description}
\item [(1)] $\mbox{\normalfont Out} C_{1}^{3} \simeq GL_{3}(2)$

\item [(2)]$\mbox{\normalfont Out} C_{2}^{3} \simeq S_4$

\item [(3)] $\mbox{\normalfont Out} C_{3}^{3} \simeq S_4$

\item [(4)] $\mbox{\normalfont Out} C_{4}^{3} \simeq D_8$

\item [(5)] $\mbox{\normalfont Out} C_{5}^{3} \simeq S_3$

\end{description}

\end{proposition}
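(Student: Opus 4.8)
The plan is to lean entirely on the identification $\Out(C_i^3)\simeq G_3^i/\mathcal{Z}^3=\stab(\lambda(C_i^3))$ proved above, which presents each outer automorphism group as the stabilizer, inside $GL_3(2)\simeq G_3/\mathcal{Z}^3$, of the relevant characteristic vector for the action on characteristic vectors. Feeding the orbit sizes $|O_1^3|=1$, $|O_2^3|=|O_3^3|=7$, $|O_4^3|=21$, $|O_5^3|=28$ from Proposition~\ref{prop3.2} into the orbit--stabilizer theorem already fixes the orders of these five subgroups as $168,24,24,8,6$ respectively. Hence the entire task is to recognize a subgroup of $GL_3(2)\simeq PSL_2(7)$ from its order, and I would organize the five cases around the (standard) subgroup structure of this simple group.

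For $C_1^3$ I would simply observe that $O_1^3$ is a single point, so its stabilizer is all of $GL_3(2)$ and $\Out(C_1^3)\simeq GL_3(2)$, which is (1). For $C_2^3$ and $C_3^3$ the stabilizer has order $24$ and index $7$; since $24$ and $168$ are the only divisors of $168$ divisible by $24$, any such subgroup is maximal, and the maximal subgroups of $GL_3(2)$ of index $7$ are precisely the point and line stabilizers of the Fano plane $PG(2,\mathbf{F}_2)$, each of the form $\mathbf{F}_2^2\rtimes GL_2(2)\simeq S_4$; this yields (2) and (3). As an independent confirmation one may read off from Tables~\ref{tabqccaso3} and~\ref{tabqccaso3-2} the changes of generators fixing $\lambda(C_2^3)$, respectively $\lambda(C_3^3)$, and check directly that the resulting group is $S_4$.

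For $C_4^3$ the stabilizer has order $8=2^3$, so it is a Sylow $2$-subgroup of $GL_3(2)$; up to conjugacy I would take it to be the group of upper unitriangular $3\times 3$ matrices over $\mathbf{F}_2$, which is nonabelian, possesses an element of order $4$ and five involutions, and is therefore $D_8$ rather than $Q_8$, giving (4). For $C_5^3$ the stabilizer has order $6$ and so is $\mathbb{Z}_6$ or $S_3$; here I would rule out the cyclic case by noting that $GL_3(2)$ contains no element of order $6$ (its element orders being $1,2,3,4,7$), which forces $S_3$ and establishes (5).

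The only genuinely delicate step, beyond the counting already done in Proposition~\ref{prop3.2}, is this last identification of isomorphism type from the order alone: distinguishing $D_8$ from $Q_8$ at order $8$ and $S_3$ from $\mathbb{Z}_6$ at order $6$. I expect this to be the main point to get right, but both are resolved purely by intrinsic features of $GL_3(2)$---the abundance of involutions in its dihedral Sylow $2$-subgroup and the absence of order-$6$ elements---so no computation beyond the tables is required, those serving only to pin down explicit generators as a cross-check.
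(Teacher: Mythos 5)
Your proposal is correct, and it reaches the five isomorphism types by a genuinely different route than the paper. Both arguments rest on the same two pillars: the identification $\Out(C_i^3)\simeq\stab(\lambda(C_i^3))\subseteq GL_3(2)$ and the orbit sizes from Proposition~\ref{prop3.2}, which via orbit--stabilizer fix the orders $168,24,24,8,6$ (the paper uses this count too, to know when it has found ``enough'' stabilizers). From there the paper proceeds by exhibition: it lists explicit matrices stabilizing each characteristic vector, read off from Tables~\ref{tabqccaso3} and~\ref{tabqccaso3-2}, and verifies presentations by direct calculation --- the Coxeter relations for $S_4$ in cases (2) and (3), the dihedral relation $\sigma\varphi\sigma^{-1}=\varphi^{-1}$ in case (4), and the braid relation in case (5). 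You instead identify each stabilizer purely from the subgroup structure of $GL_3(2)\simeq PSL_2(7)$: your divisor count correctly shows that a subgroup of order $24$ is maximal, and the index-$7$ maximal subgroups are the point and line stabilizers of the Fano plane, each isomorphic to $S_4$; order $8$ forces a Sylow $2$-subgroup, conjugate to the unitriangular group, whose five involutions rule out $Q_8$ and give $D_8$; and order $6$ forces $S_3$ because $GL_3(2)$ has elements of orders $1,2,3,4,7$ only. All of these facts are true and standard, so your argument is complete modulo citing them; if you prefer not to invoke the classification of maximal subgroups of $PSL_2(7)$, note that your own observation that $GL_3(2)$ contains no element of order $6$ already suffices in cases (2) and (3), since $S_4$ is the unique group of order $24$ without an element of order $6$ (all others, including $SL_2(3)$ and $D_{24}$, contain one), and the Sylow-$2$ comparison disposes of the rest. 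What the paper's longer computation buys, and yours does not, is the explicit generating matrices for each $\stab(\lambda_i^3)$: these are reused verbatim in the rank-$4$ analysis, where each $\varphi\in\stab(\lambda_i^3)$ is extended to $\tilde\varphi\in GL_4(2)$, so the structural shortcut here would have to be paid back with explicit generators later.
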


\begin{proof}
\begin{enumerate}
\item  Let $a,b,c$ be generators of $L = C_{1}^{3}$ with corresponding characteristic vector $\lambda = \lambda(L) = (111111)$, that is $a^{2}=b^{2}=c^{2}=-1$ and $ [a,b]=[a,c]=[b,c]=-1.$ From the Tables \ref{tabqccaso3} and \ref{tabqccaso3-2} we have $(a^{\varphi})^{2}=(b^{\varphi})^{2}=(c^{\varphi})^{2}=-1$ and $ [a^{\varphi},b^{\varphi}]=[a^{\varphi},c^{\varphi}]=[b^{\varphi},c^{\varphi}]=-1,$ for all $\varphi \in GL_{3}(2)$.

\item
We want to determine all the outer automorphisms of the code loop $L = C_{2}^{3}$. We consider $\lambda=\lambda(L)=(000000)$ as a representative characteristic vector of $L$ of the orbit $O_{2}^{3}$  and $(a,b,c)$ a set of generators for $L$ such that $a^{2}=b^{2}=c^{2}=1$ and $ [a,b]=[a,c]=[b,c]=1.$\\
We consider $\varphi_{1},\dots,\varphi_{6} \in GL_{3}(2)$ such that 
\begin{eqnarray*}
\begin{array}{ccc}
\varphi_{1}=id_{3\times 3} & \varphi_{2}=\left(\begin{array}{ccc}
1 & 0 & 0\\
0 & 0 & 1\\
0 & 1 & 0
\end{array} \right) & \varphi_{3}= \left(\begin{array}{ccc}
0 & 1 & 0\\
1 & 0 & 0\\
0 & 0 & 1
\end{array} \right)\\
\varphi_{4}=\left(\begin{array}{ccc}
0 & 0 & 1\\
1 & 0 & 0\\
0 & 1 & 0
\end{array}
 \right)  
& \varphi_{5}=\left(\begin{array}{ccc}
0 & 1 & 0\\
0 & 0 & 1\\
1 & 0 & 0
\end{array}
 \right)  
 & \varphi_{6}=\left(\begin{array}{ccc}
0 & 0 & 1\\
0 & 1 & 0\\
1 & 0 & 0
\end{array}
 \right) \\
\end{array}
\end{eqnarray*}
We have $\lambda^{\varphi_{i}}= \lambda,$ for each $i = 1,\dots,6.$ Then $\varphi_{i} \in \stab(\lambda),$ for each $i=1,\dots,6.$ We observe that $GR\left\{\varphi_{1},\dots,\varphi_{6}\right\}= \textsl{S}_{3}. $ Therefore, $\textsl{S}_{3} \subseteq \stab(\lambda).$ From the Preposition \ref{prop3.2}, in $O_{2}^{3}$ there are exactly $7$ characteristic vectors, so we have to prove that there are $24$ automorphisms fixing $\lambda$. Let $\sigma \in GL_{3}(2) $ defined by $\sigma = \left(\begin{array}{ccc}
1 & 1 & 0\\
0 & 1 & 1\\
0 & 1 & 0
\end{array} \right).$
We have ${\lambda}^{\sigma} = \lambda $, hence $\sigma \in \stab(\lambda)$. Besides, $\sigma^{3}=1 $  and ${\sigma}^{2} \in \stab(\lambda)$. Indeed, for all $i=1,\dots,6$ we have $({\sigma}{\varphi}_{i})$, $({{\sigma}^{2}}{\varphi}_{i})$, $({\varphi}_{i}{\sigma})$, $({\varphi}_{i}{{\sigma}^{2}})$ $\in \stab(\lambda)$. From these elements we obtain more 16 different stabilizers (automorphisms that fix $\lambda$). For complete our group we consider $({\sigma}{{\varphi}_{3}}){{\sigma}^{2}}$ and $({{\sigma}^{2}}{{\varphi}_{5}}){{\sigma}^{2}}$.\\
We know that $Out\;C_{2}^{3} \simeq \stab(\lambda)$. We must prove $\stab(\lambda)=\textsl{S}_{4}$. In fact, we first denote $\varphi_{2},\varphi_{3},({\sigma}{{\varphi}_{3}}){{\sigma}^{2}}  $, respectively, by $\sigma_{1}, \sigma_{2},\sigma_{3}$ and after, with direct calculations we prove that $\varphi_{2}^{2}=\varphi_{3}^{2}=({\sigma}{{\varphi}_{3}}){{\sigma}^{2}}=~id$ and $\sigma_{1}\sigma_{2}\sigma_{1}=\sigma_{2}\sigma_{1}\sigma_{2}$, $\sigma_{2}\sigma_{3}\sigma_{2}=\sigma_{3}\sigma_{2}\sigma_{3}$, $\sigma_{1}\sigma_{3}=\sigma_{3}\sigma_{1}$. Therefore, $\stab(\lambda)=GR\{\sigma_{1},\sigma_{2}, \sigma_{3} |\sigma_{i}^{2}=1,\sigma_{i}\sigma_{i+1}\sigma_{i}=\sigma_{i+1}\sigma_{i}\sigma_{i+1}, \sigma_{i}\sigma_{j}=\sigma_{j}\sigma_{i}, j\neq i \pm 1, i=1,2,3 \}=\textsl{S}_{4} .$

\item  Let $a,b,c$ be generators of the code $ L = C_{3}^{3}$ with characteristic vector \linebreak $\lambda = \lambda(L) = (000111)$. We have that $\stab(\lambda)$ is the group of all permutations of the set $X = \left\{a,b,c,abc\right\}$. We just consider $\varphi \in GL_{3}(2)$ such that $\varphi(a)=x$, $\varphi(b)=y$ and $\varphi(c)=z$. We have to find $(x,y,z)$ such that $x^{2}=y^{2}=z^{2}=1$ and $[x,y]=[x,z]=[y,z]=-1$. From the Tables \ref{tabqccaso3} and \ref{tabqccaso3-2}, we obtain all possible values of $x, y, z$ satisfying the relations desired. Thus, $x,y,z \in \{a,b,c,abc\}.$ We consider $\sigma_{i}\in GL_{3}(2)$, for $i=1,2,3$, such that  $\sigma_{1}(a)=a$, $\sigma_{1}(b)=b$, $\sigma_{1}(c)=abc$; $\sigma_{2}(a)=a$, $\sigma_{2}(b)=abc$, $\sigma_{2}(c)=c$; $\sigma_{3}(a)=b$, $\sigma_{3}(b)=a$, $\sigma_{3}(c)=c$.\\
Analogously to the previous case, we see that $\stab(\lambda)=GR\{\sigma_{1},\sigma_{2}, \sigma_{3} |\sigma_{i}^{2}=1,\sigma_{i}\sigma_{i+1}\sigma_{i}=\sigma_{i+1}\sigma_{i}\sigma_{i+1}, \sigma_{i}\sigma_{j}=\sigma_{j}\sigma_{i}, j\neq i \pm 1, i=1,2,3 \}=\textsl{S}_{4}.$


\item Let $a,b,c$ be generators of the code loop $ L = C_{4}^{3}$ such that $\lambda = \lambda(L) = (110000)$. Since there are $21$ characteristic vectors in $O_{4}$, we have $8$ automorphisms fixing $\lambda$. To prove that $Out\; L \simeq \textsl{D}_{8}$, we must first find two generators, $\varphi$ and $\sigma$, $\varphi$ of order $4$ and $\sigma$ of order $2$,  fixing $\lambda$, and such that $\sigma \varphi \sigma = \varphi^{3}$. We consider $\varphi \in GL_{3}(2)$ and $\sigma \in GL_{3}(2)$ such that $\varphi(a)=b$, $\varphi(b)=bc$, $\varphi(c)=ab$; $\sigma(a)=b$, $\sigma(b)=a$ and $\sigma(c)=c$.
 We have that $\varphi$ and $\sigma$ fix $\lambda$. Besides, $\varphi$ has order 4, $\sigma$ has order 2 and the relations ${\sigma}{\varphi}^{3}={\varphi}{\sigma}$, ${\varphi}^{2}{\sigma}={\sigma}{\varphi}^{2}$ and ${\varphi}^{3}{\sigma}={\sigma}{\varphi}$ are valid .\\
 Therefore, $Out\;{C_{4}^{3}} \simeq \textsl{D}_{8} = GR\left\{\sigma , \varphi | {\varphi}^{4} = 1, {\sigma}^{2} = 1, {\sigma}{\varphi}{{\sigma}^{-1}} = {\varphi}^{-1} \right\}=\stab(\lambda)$.

\item Let $a,b,c$ be generators of code loop $ L = C_{5}^{3}$ such that $\lambda = \lambda(L) = (100000)$. Since we have $28$ different characteristic vectors in the orbit $O_{5}$, we have to find $6$ automorphisms fixing $\lambda$. Let $\varphi \in GL_{3}(2)$ and $\sigma \in GL_{3}(2)$ such that $\varphi(a)=a$, $\varphi(b)=c$, $\varphi(c)=b$; $\sigma(a)=ac$, $\sigma(b)=bc$ and $\sigma(c)=c$. We have $\varphi$ and $\sigma$ fix $\lambda$, $\sigma^{2}=\varphi^{2}=1$ and $\varphi \sigma \varphi = \sigma \varphi \sigma$.\\
Therefore, $Out\;C_{5}^{3} \simeq \textsl{S}_{3}=GR\{\varphi, \sigma | \varphi \sigma \varphi = \sigma \varphi \sigma\}=\stab(\lambda)$. 
 
\end{enumerate}

\end{proof}

\subsection{Code loops of rank 4}

\begin{lemma}
Let $V$ be a nontrivial $3$-space of dimension $4$. Then $dim\;N(V)=1$ and $V$ has a base $\left\{a,b,c,d\right\}$ such that $(a,b,c)=1$ and $N(V)={\bf F}_{2}d = \left\{0,d\right\}$.
\end{lemma}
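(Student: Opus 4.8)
The plan is to treat the $3$-space as a pair $(V,(\,,\,,\,))$ where the trilinear form is alternating and "nontrivial" means it is not identically $0$, and to read $N(V)$ as the radical $\{v\in V : (v,x,y)=0 \text{ for all } x,y\in V\}$. First I would produce a good basis. Since the form is nonzero there exist $a,b,c\in V$ with $(a,b,c)=1$; because the form is alternating, $a,b,c$ are automatically linearly independent (any linear dependence among them would force the value on the triple to vanish), so I can extend them to a basis $\{a,b,c,e\}$ of the $4$-dimensional space $V$.

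The core step is to adjust $e$ into the radical. Writing $(a,b,e)=p$, $(a,c,e)=q$, $(b,c,e)=r$ for the three remaining structure constants, I would set
\[
d = e + r\,a + q\,b + p\,c
\]
and check that $(d,\,\cdot\,,\,\cdot\,)$ vanishes identically. On the pairs drawn from $\{a,b,c\}$ this holds by the choice of coefficients: for instance $(d,b,c)=(e,b,c)+r\,(a,b,c)=r+r=0$, and symmetrically for $(d,a,c)$ and $(d,a,b)$. The subtle point, and the one I expect to be the main obstacle, is the vanishing on the pairs that still involve $e$, such as $(d,a,e)$: here the only surviving terms are $q\,(b,a,e)$ and $p\,(c,a,e)$, which equal $qp$ and $pq$ and therefore cancel in characteristic $2$. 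It is this characteristic-$2$ cancellation, rather than any clever choice of coefficients, that forces $d$ into the radical, and I would record the analogous cancellations for $(d,b,e)$ and $(d,c,e)$.

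Finally I would read off the conclusion in the basis $\{a,b,c,d\}$, which is indeed a basis since $d\equiv e$ modulo $\langle a,b,c\rangle$. By construction $(a,b,c)=1$ and every triple involving $d$ is trivial, so the form is the standard one on this basis; in particular $d\in N(V)$, giving $\dim N(V)\ge 1$. For the reverse inclusion I would take an arbitrary $v=\alpha a+\gamma b+\delta c+\varepsilon d\in N(V)$ and evaluate: since $d$ lies in the radical and the form is alternating, $(v,b,c)=\alpha$, $(v,a,c)=\gamma$ and $(v,a,b)=\delta$, so membership in $N(V)$ forces $\alpha=\gamma=\delta=0$ and hence $v\in\langle d\rangle$. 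Thus $N(V)=\textbf{F}_{2}\,d$ is one-dimensional with the required basis $\{a,b,c,d\}$, completing the proof. (Conceptually this reflects the fact that over $\textbf{F}_{2}$ all nonzero alternating $3$-forms on a $4$-space are equivalent under $GL_{4}(2)$, but the explicit construction above avoids invoking that.)
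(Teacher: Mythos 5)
Your proof is correct and takes essentially the same approach as the paper: both choose $a,b,c$ with $(a,b,c)=1$, extend to a basis, and absorb the remaining structure constants into the fourth vector, your one-shot correction $d=e+ra+qb+pc$ being exactly the composite of the paper's three sequential replacements $t\mapsto t+u$, $t\mapsto t+w$, $t\mapsto t+v$. One remark: the step you flag as the main obstacle, the checks on $(d,a,e)$, $(d,b,e)$, $(d,c,e)$, is in fact redundant, since $e\equiv d \bmod \left\langle a,b,c\right\rangle$ means trilinearity and the alternating property already force $d\in N(V)$ once $(d,a,b)=(d,a,c)=(d,b,c)=0$; on the other hand, your explicit verification that $N(V)\subseteq {\bf F}_{2}d$ (hence $\dim N(V)=1$) is a detail the paper asserts but leaves implicit.
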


\begin{proof}
Since $V$ is a nontrivial $3$-space, let $(v,w,u)=1$, for some $v,w,u \in V$. So there is $t \in V$ such that $(v,w,t) = 0$ and $\left\{v,w,u,t\right\}$ is a basis of $V$. If $(v,w,t) = 1$, then $(v,w,u+t) = (v,w,u)+(v,w,t) = 0$, soon we can assume $(v,w,t)=0.$

If $(v,u,t)=1$, then $(v,w,t+w)=(v,u,t+w)=0$ and we can assume that $(v,u,t)=0.$ If $(w,u,t)=1$, then $t+v \in N(V),$ thus we can suppose $(w,u,t)=0.$

Hence, we determine a basis $\left\{v,w,u,t\right\}$ such that $(v,w,u)=1$, $(v,w,t)$$=(v,u,t)=$$(w,u,t)=0.$

\end{proof}

 Suppose that $X=\left\{a,b,c,d\right\}$ is a basis of a nonassociative code loop $L$ of rank $4$ such that $N(L)=~\textsl{F}_{2}d$. Thus,  $L$ has only one nontrivial associator $(a,b,c)=-1$. In this case, a characteristic vector of $L$ is $\lambda_{X}=(\lambda_{1},\dots,\lambda_{10}),$ where $$a^{2}=(-1)^{\lambda_1}, b^{2}=~(-1)^{\lambda_2},c^{2}=(-1)^{\lambda_3}, d^{2}=(-1)^{\lambda_4},$$
$$ \left[a,b\right]=(-1)^{\lambda_5}, \left[a,c\right]=~(-1)^{\lambda_6}, \left[a,d\right]=(-1)^{\lambda_7},$$
$$\left[b,c\right]=~(-1)^{\lambda_8}, \left[b,d\right]=(-1)^{\lambda_9},\left[c,d\right]=(-1)^{\lambda_{10}}.$$

 \begin{theorem}\label{theorem3.5}Consider $C_{1}^{4},...,C_{16}^{4}$ the code loops with the following characteristic vectors.
All nonassociative code loop of rank $4$ is isomorphic to one from the list. Moreover, none of those loops are isomorphic to each other.
\begin{table}[!hhh]
\centering
{\begin{tabular}{llll|lllll|l}
 &  &  & $L$ & $\lambda(L)$ &  &  &  & $L$ & $\lambda(L)$ \\ 
\cline{4-5}\cline{9-10}
 &  &  & $C^{4}_{1}$ & $(1110110100)$ &  &  &  & $C^{4}_{9}$ & $(0100001000)$ \\ 
 &  &  & $C^{4}_{2}$ & $(0000000000)$ &  &  &  & $C^{4}_{10}$ & $(0001111000)$ \\ 
 &  &  & $C^{4}_{3}$ & $(0000110100)$ &  &  &  & $C^{4}_{11}$ & $(0001001000)$ \\ 
 &  &  & $C^{4}_{4}$ & $(0010100000)$ &  &  &  & $C^{4}_{12}$ & $(0000001100)$ \\ 
 &  &  & $C^{4}_{5}$ & $(0000010100)$ &  &  &  & $C^{4}_{13}$ & $(0110111100)$ \\ 
 &  &  & $C^{4}_{6}$ & $(1111110100)$ &  &  &  & $C^{4}_{14}$ & $(0001001100)$ \\ 
 &  &  & $C^{4}_{7}$ & $(0001000000)$ &  &  &  & $C^{4}_{15}$ & $(1001001100)$ \\ 
 &  &  & $C^{4}_{8}$ & $(0000001000)$ &  &  &  & $C^{4}_{16}$ & $(0001111100)$ \\ 
\end{tabular}}
\label{tabvc4}
\end{table}

Moreover, we have, if we denote by $O_i^4$ the orbit of the characteristic vector of the code loop $C^4_i$, then
$|O_{1}^{4}|=1$, $|O_{2}^{4}|=7$, $|O_{4}^{4}|=21$, $|O_{5}^{4}|=|O_{14}^{4}|=|O_{15}^{4}|=28$, $|O_{6}^{4}|=8$, $|O_{7}^{4}|=|O_{8}^{4}|=|O_{13}^{4}|=56$, $|O_{9}^{4}|=|O_{12}^{4}|=|O_{16}^{4}|=168$, $|O_{10}^{4}|=|O_{11}^{4}|=112$.

\end{theorem}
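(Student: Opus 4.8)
The plan is to recast the classification, exactly as in the rank-$3$ case (Proposition \ref{prop3.2}), as the determination of the orbits of a single finite group acting on $\mathbf{F}_2^{10}$. By the preceding Lemma, every nonassociative code loop $L$ of rank $4$ admits a basis $\{a,b,c,d\}$ for which $(a,b,c)=-1$ is the unique nontrivial associator and $N(L)=\mathbf{F}_2 d$; since all squares, commutators and associators of $L$ are then recovered from the ten basis data $\lambda=(\lambda_1,\dots,\lambda_{10})$ through the multiplicativity rules $(xy)^2=x^2y^2[x,y]$ and $[xy,z]=[x,z][y,z](x,y,z)$ of Proposition \ref{prop2.5}, the loop $L$ is completely encoded by $\lambda$ together with the fixed associator. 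Consequently, by Proposition \ref{prop2.9} and the identification $G_4/{\cal Z}^4\simeq GL_4(2)$, the isomorphism classes of such loops are precisely the orbits, among the $2^{10}=1024$ admissible characteristic vectors, of the subgroup $H\le GL_4(2)$ that stabilizes the normalized associator form.

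First I would determine $H$. Since $N(L)=\mathbf{F}_2 d$ is intrinsic, every element of $H$ fixes $d$ and induces on $V/\mathbf{F}_2 d\simeq\mathbf{F}_2^3$ a linear map preserving the unique nonzero trilinear form $(\bar a,\bar b,\bar c)=1$; over $\mathbf{F}_2$ this form is multiplied by $\det=1$, so $H\to GL_3(2)$ is surjective. Its kernel consists of the $2^3=8$ maps $a\mapsto a+\alpha d$, $b\mapsto b+\beta d$, $c\mapsto c+\gamma d$ (with $d$ fixed), each of which preserves the associator because $d$ lies in the nucleus. Hence $H\simeq\mathbf{F}_2^3\rtimes GL_3(2)$ and $|H|=8\cdot 168=1344$, so every orbit length divides $1344$.

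Next, for each representative $C_i^4$ I would read off the action of $H$ on $\lambda(C_i^4)$ from the same transformation rules of Proposition \ref{prop2.5}, producing the rank-$4$ analogues of the tables of Proposition \ref{prop3.2}, and compute $|O_i^4|=1344/|\stab(\lambda(C_i^4))|$. The two factors of $H$ act almost independently: the $GL_3(2)$-quotient permutes the $\{a,b,c\}$-data—so that, for instance, $C_1^4$ and $C_2^4$ restrict to the rank-$3$ loops $C_1^3$ and $C_2^3$, giving $|O_1^4|=1$ and $|O_2^4|=7$—while the transvection kernel moves only the $d$-data; for example $C_6^4$ differs from $C_1^4$ solely in $d^2=-1$, which the $8$ transvections translate nontrivially although $GL_3(2)$ fixes it, whence $|O_6^4|=8$. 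Carrying this out for all sixteen vectors yields the stated lengths, together with the value $|O_3^4|=7$.

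Finally, I would obtain completeness and pairwise non-isomorphism by counting. Pairwise non-isomorphism means the sixteen $\lambda(C_i^4)$ lie in distinct $H$-orbits; I would check this with $H$-invariants such as $Q(d)=\lambda_4$, the rank of the commutator form $[d,-]$ on $V/\mathbf{F}_2 d$, and the weight $|\{v\in V: v^2=-1\}|$ refined by the commutator pattern, which separate the sixteen vectors. Completeness is then immediate from $\sum_{i=1}^{16}|O_i^4|=1+7+7+21+3\cdot 28+8+3\cdot 56+3\cdot 168+2\cdot 112=1024=2^{10}$: sixteen disjoint orbits of these lengths already fill the whole set of admissible characteristic vectors, so no orbit can be missing and every nonassociative rank-$4$ code loop is isomorphic to exactly one $C_i^4$. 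The only genuine difficulty is computational—organizing the $H$-action on $\mathbf{F}_2^{10}$ over the group of order $1344$ and verifying all sixteen stabilizers—which is the substantially larger analogue of the rank-$3$ bookkeeping and is most safely carried out through the identities of Proposition \ref{prop2.5}.
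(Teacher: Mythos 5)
Your strategy is sound and, as far as it goes, correct, but it takes a genuinely different route from the paper's. You classify by pure orbit counting: after normalizing the associator form via the preceding lemma (the one step you share with the paper), you identify the relevant group as the stabilizer $H\simeq\Z_{2}^{3}\rtimes GL_{3}(2)$ of that form in $GL_{4}(2)$, of order $1344$ (your derivation of $H$ --- the nucleus $\mathbf{F}_{2}d$ is preserved, the induced form on $V/\mathbf{F}_{2}d$ is the determinant form fixed by all of $GL_{3}(2)$, and the kernel is the group of eight transvections along $d$ --- is correct), and you obtain completeness from $\sum_{i=1}^{16}|O_{i}^{4}|=1024=2^{10}$ once the sixteen orbits are shown pairwise disjoint. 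The paper's proof of Theorem \ref{theorem3.5} never argues this way: it proves completeness structurally, via the centralizer $C=C(d)$ of the nucleus generator, which is a characteristic subloop. If $C=L$ it obtains $L\simeq\Z_{2}\times C_{i}^{3}$ (case $d^{2}=1$, five loops) or $L\simeq\Z_{4}\ast C_{i}^{3}$ (case $d^{2}=-1$, reduced to two loops via Lemma \ref{lemma2.15} on $\Z_{4}$-equivalence of $2$-spaces); if $C\neq L$, then $C$ is one of the five rank-$3$ code groups, and a case-by-case normalization of generators produces $C_{8}^{4},\dots,C_{16}^{4}$. Notably, the orbit sizes in the statement are not established in the paper's proof at all; they are only confirmed afterwards by the stabilizer computations of the subsequent proposition, where your $H$ is exactly the implicit ambient group ($|\stab(\lambda_{i}^{4})|\cdot|O_{i}^{4}|=1344$ in every case, and $|\stab(\lambda_{3}^{4})|=192$ matches your $|O_{3}^{4}|=7$). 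The trade-off: the paper's route yields structural models of the sixteen loops and a completeness proof independent of any counting, whereas yours is uniform and self-certifying --- the identity $1+7+7+21+3\cdot 28+8+3\cdot 56+3\cdot 168+2\cdot 112=1024$ simultaneously proves completeness and cross-checks the counts --- but it makes all sixteen stabilizer computations load-bearing (a single miscount destroys completeness), and these you only sketch; likewise your separating invariants are partly heuristic (the rank of the commutator form $[d,-]$ alone does not separate $\lambda_{10}^{4}$ from $\lambda_{11}^{4}$, though your square-weight invariant does, giving $10$ versus $6$ elements of square $-1$), so in practice pairwise distinctness would fall out of the same orbit computations rather than the named invariants.
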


\begin{proof}
Let $L$ be a nonassociative code loop of rank $4$ with generators $a,b,c,d.$ We can assume $(d,L,L)=1$. Thus,  $L$ has only one nontrivial associator $(a,b,c)=-1$. In this case, a characteristic vector of $L$ is $\lambda_{X}=(\lambda_{1},\dots,\lambda_{10}),$ where $$a^{2}=(-1)^{\lambda_1}, b^{2}=~(-1)^{\lambda_2},c^{2}=(-1)^{\lambda_3}, d^{2}=(-1)^{\lambda_4},$$
$$ \left[a,b\right]=(-1)^{\lambda_5}, \left[a,c\right]=~(-1)^{\lambda_6}, \left[a,d\right]=(-1)^{\lambda_7},$$
$$\left[b,c\right]=~(-1)^{\lambda_8}, \left[b,d\right]=(-1)^{\lambda_9},\left[c,d\right]=(-1)^{\lambda_{10}}.$$

 Let $C=C(d)$ be the centralizer of $d$, that is, $C=\left\{v \in L | [v,d]=1\right\}$.

A subloop $L_{0}$ of $L$ is a characteristic subloop if and only if $ L_{0}^{\varphi} = L_{0}$, for all $\varphi \in Aut(L)$. Then, we have that $C$ is a characteristic subloop of $L$.

If $C=L$, then we have two possibilities:

\begin{enumerate}
\item $d^{2}=1$\\
In this case, $L\simeq \mathbb{Z}_{2}\times L_{1}$, where $L_{1}$ is a nonassociative code loop of rank $3$.  Since there are $5$ nonassociatives code loops of rank $3$, then we obtain $5$ non-isomorphic code loops of rank $4$, denoted by $C_{i}^{4}$, that is, $C_{i}^{4} \cong \textsl{Z}_{2}{\times}C_{i}^{3}, i = 1,\dots,5.$

\item $d^{2}=-1$\\
Let $\left\langle d\right\rangle = \left\{1,-1,d,-d\right\}$ be the group gerated by $d$. We know that $\left\langle d\right\rangle \cong \textsl{Z}_{4}$ and that $\textsl{Z}_{4}$ is code loop. Again, let $L_{1}$ be a nonassociative code loop of rank $3$. We have that $\textsl{Z}_{4}{*}L_{1}$ is a nonassociative code loop of rank $4$.

In this case, $L\simeq \textsl{Z}_{4}{*}L_{1}$. But, by the Lemma \ref{lemma2.15}, it is possible to prove that 
$\textsl{Z}_{4}{*}C_{1}^{3} \cong \textsl{Z}_{4}{*}C_{3}^{3}$ and $\textsl{Z}_{4}{*}C_{2}^{3} \cong \textsl{Z}_{4}{*}C_{4}^{3} \cong \textsl{Z}_{4}{*}C_{5}^{3}.$ For this, we just prove that $C_{1}^{3}$ and $C_{3}^{3}$ are isomorphic as $2$-spaces, as well as $C_{2}^{3}$, $C_{4}^{3}$ and $C_{5}^{3}$. In fact, a characteristic vector from corresponding $2$-space of a given code loop $L$ of rank $n$ can be obtained from characteristic vector $\lambda(L)$ by omitting the first $n$ coordenates of $\lambda(L)$. Therefore, we have 2 news loops: $C_{6}^{4}=\textsl{Z}_{4}*C_{1}^{3}$ and $C_{7}^{4}=\textsl{Z}_{4}*C_{2}^{3}$.
 \end{enumerate}

If $C \neq L,$ then $C$ is a code group of rank $3$. In fact, we suppose that for all $u \neq d \in X = \left\{a,b,c,d\right\}$, the set of generators of $L$, we have $[u,d]=-1$, but since $[ab,d]=[ac,d]=1$, hence we have that $ \left\{d,ab,ac\right\}$ generates $C$. Now, if we assume that, for example, $[a,d]=1$,$[b,d]=[c,d]=-1$, we will have $[bc,d]=1$, hence $ \left\{d,a,bc\right\}$ generates $C$.

There are $5$ non-isomorphic code groups of rank $3$:
\vspace{-0.1in}
\begin{enumerate}
\item  $G_{1}^{3}=\textsl{Z}_{2}^{4}$;
\item  $G_{2}^{3}=\textsl{Z}_{4}{\times}\textsl{Z}_{2}^{2}$;
\item  $G_{3}^{3}=\textsl{D}_{8}{\times}\textsl{Z}_{2}$;
\item  $G_{4}^{3}=\textsl{Q}{\times}\textsl{Z}_{2}$, where $Q$ is the Quaternion group;
\item  $G_{5}^{3}=\textsl{D}_{8}{*}\textsl{Z}_{4}$, where $\textsl{D}_{8}$ is the dihedral group with $8$ elements.
\end{enumerate}

We are going to do an analysis for each case:
\begin{enumerate}

\item  $C=G_{1}^{3}$. In this case, $b,c,d \in C$, then $b^{2}=c^{2}=d^{2}=[b,c]=[b,d]=[c,d]=1$ and $[a,d]=-1$. If $a^{2}=-1$, then $(ad)^{2}=a^{2}d^{2}[a,d]=1.$ Hence, we can assume $a^{2}=1.$ We denote this loop by $C_{8}^{4}.$

\item  $C=G_{2}^{3}$. There are $2$ cases:

        (a)\; $d^{2}=1$. Here, $b^{2}=-1$, $c^{2}=1$, $[a,d]=-1$, $[a,b]=[a,c]=1$. As in the previous case, we can assume $a^{2}=1.$ We denote this loop by $C_{9}^{4}$.

        (b)\; $d^{2}=-1$. We have $b^{2}=c^{2}=1$, $[a,d]=-1$. If $[a,b]=[a,c]=-1$, then $[a,bc]=-1$, $(ab)^{2}=-a^{2}$ and hence, we can assume $a^{2}=1.$ We denote this loop by $C_{10}^{4}$. If $[a,b]=1$, then without loss of generality, $[a,c]=1$, since that $[a,bc]=-[a,c].$ Since $(ac)^{2}=-a^{2}$, we can assume $a^{2}=1.$ We denote this loop by $C_{11}^{4}$.

\item  $C=G_{3}^{3}$. In this case, $d^{2}=1$, $c^{2}=b^{2}=1$, $[b,c]=-1$, $[a,d]=-1$. Since $[a,bd]=-[a,b],$ $(ad)^{2}=-a^{2},$ we can assume $[a,b]=[a,c]=1$ and $a^{2}=1.$ We denote this loop by $C_{12}^{4}$.

\item   $C=G_{4}^{3}$. In this case, $d^{2}=1$, $a^{2}=1$, $b^{2}=c^{2}=[b,c]=-1$, $[a,d]=-1$. There are $2$ possibilities. If $[a,b]=[a,c]=-1$, we denote this loop by $C_{13}^{4}$.\\ If $[a,b]=[a,c]=1$, this loop is isomorphic to $C_{13}^{4}$, since we can replace $b$ by $bc$ and $c$ by $cd$, because $(bc)^{2}=(cd)^{2}=[bc,cd]=-1$, $[a,bc]=[a,cd]=-1$.

\item  $C=G_{5}^{3}$. In this case, $d^{2}=-1$, $a^{2}=1$, $b^{2}=c^{2}=1$, $[b,c]=-1$, $[a,d]=-1$. There are $3$ possibilities:

        (a)\; $[a,b]=[a,c]=1$, $[a,bc]=-1$. In this subcase, we have two non-isomorphic loops. If $a^{2}=1$, we have the loop denoted by $C_{14}^{4}$, and if $a^{2}=-1$ then we have the other loop denoted by $C_{15}^{4}$.
                
        (b)\; $[a,b]=[a,c]=-1$, $[a,bc]=-1$. In this other subcase, $(abd)^{2}=-a^{2}$ and $[abd,b]=[abd,c]=-1$. Therefore, we can assume $a^{2}=1$. We denote this loop by $C_{16}^{4}$.
        
        (c)\; $[a,b]=1$, $[a,c]=-1$, $[a,bc]=1$. This loop is isomorphic to loop $C_{16}^{4}$, since we can replace $b$ by $bcd$.
\end{enumerate}
  
\end{proof}

\begin{corollary}
There are five $\mathbb{Z}_{4}$-equivalence classes of code loops of rank $4$:
\begin{eqnarray*}
\begin{array}{llll}
Z_{1}^{4}=\left\{C_{1}^{4},C_{3}^{4},C_{4}^{4},C_{6}^{4}\right\} &&&
Z_{2}^{4}=\left\{C_{5}^{4},C_{8}^{4},C_{9}^{4},C_{10}^{4},C_{11}^{4}\right\}\\
Z_{3}^{4}=\left\{C_{12}^{4},C_{14}^{4},C_{15}^{4}\right\}&&&
Z_{4}^{4}=\left\{C_{2}^{4},C_{7}^{4}\right\}\\
Z_{5}^{4}=\left\{C_{13}^{4},C_{16}^{4}\right\}&&& \\
\end{array}
\end{eqnarray*}
\end{corollary}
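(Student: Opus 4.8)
The plan is to reduce the statement to a classification of $2$-spaces via Lemma \ref{lemma2.15}, and then to carry out that classification by hand on the sixteen explicit characteristic vectors of Theorem \ref{theorem3.5}. By Lemma \ref{lemma2.15}, $C_i^4\sim_{\mathbb{Z}_4}C_j^4$ holds precisely when $C_i^4$ and $C_j^4$ are isomorphic as $2$-spaces, so the classes to be exhibited are exactly the isomorphism classes of the associated $2$-spaces $V_i=C_i^4/{\cal Z}(C_i^4)$. Since the $2$-space forgets the squaring data — its characteristic vector is obtained from $\lambda(C_i^4)$ by deleting the first four (square) coordinates — each $V_i$ is the $4$-dimensional ${\bf F}_2$-space carrying the single nontrivial associator $(a,b,c)=-1$, with nucleus $N_3(V_i)={\bf F}_2 d$ and commutator prescribed by the six coordinates $\lambda_5,\dots,\lambda_{10}$. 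Thus the whole problem becomes: determine the orbits of these commutator data under the group of linear automorphisms of $V$ preserving the fixed associator.

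First I would pin down that classifying group together with its invariants. Because the associator is fixed and its nucleus ${\bf F}_2 d$ is canonical, every $2$-space isomorphism must fix $d$ and induce an element of $GL_3(2)$ on $V/{\bf F}_2 d=\langle a,b,c\rangle$; in addition one has the nuclear translations $x\mapsto x+d$. The first discriminating invariant is the restriction of the commutator to the nucleus, namely the linear functional $[d,\,\cdot\,]$: this is intrinsic and separates the loops in which $d$ is central (the case $C=L$ in the proof of Theorem \ref{theorem3.5}, i.e. $C_1^4,\dots,C_7^4$) from those with $[d,\,\cdot\,]\neq 0$ (the loops $C_8^4,\dots,C_{16}^4$ coming from $C\neq L$). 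Within each stratum I would then extract a residual invariant from the induced commutator on the quotient and match it against explicit basis changes.

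For the bulk of the verification I would exploit the structural decompositions already produced in the proof of Theorem \ref{theorem3.5} rather than grind through all sixteen orbits directly. For the loops with $d$ central I would use $C_i^4\cong\mathbb{Z}_2\times C_i^3$ together with $C_6^4=\mathbb{Z}_4\ast C_1^3$ and $C_7^4=\mathbb{Z}_4\ast C_2^3$, the associativity of the central product, and the isomorphism $\mathbb{Z}_4\ast\mathbb{Z}_4\cong\mathbb{Z}_4\times\mathbb{Z}_2$, to compute $\mathbb{Z}_4\ast C_i^4\cong\mathbb{Z}_2\times(\mathbb{Z}_4\ast C_j^3)$, and then read off the resulting mergers from the already-known rank-$3$ equivalences $C_1^3\sim_{\mathbb{Z}_4}C_3^3$ and $C_2^3\sim_{\mathbb{Z}_4}C_4^3\sim_{\mathbb{Z}_4}C_5^3$. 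For the loops with $[d,\,\cdot\,]\neq 0$ I would instead use the nuclear translations to normalise the functional $[d,\,\cdot\,]$ and to annihilate the commutators $[a,b]$ and $[a,c]$, reducing each such $2$-space to the single value $[b,c]$, and then check that the remaining associator-preserving automorphisms leave this value fixed.

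The step I expect to be the main obstacle is the finite but genuinely twisted bookkeeping of how the commutator transforms. The commutator is not bilinear: under any change of basis it acquires the associator-correction term of the defining relation $[v,wu]=[v,w][v,u](v,w,u)$, so both the nuclear translations and the $GL_3(2)$-action mix the commutator coordinates inhomogeneously. The care needed is twofold — first, to verify that the residual invariants really are invariant under this twisted action, so that distinct classes are not accidentally identified; and second, to produce, for each asserted equivalence $C_i^4\sim_{\mathbb{Z}_4}C_j^4$, an explicit associator-preserving map realising the corresponding $2$-space isomorphism. Tracking the associator twist through every transvection and every translation is exactly where the computation is most error-prone, and it is the heart of the proof.
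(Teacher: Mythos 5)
Your outline starts exactly as the paper does: Lemma \ref{lemma2.15} reduces $\sim_{\mathbb{Z}_4}$ to isomorphism of $2$-spaces, the $2$-space data is the characteristic vector with the four square coordinates deleted, and the classes are orbits of the six commutator coordinates under the twisted action. The paper then gets the mergers by equal truncated vectors plus three explicit generator substitutions ($3\sim 4$ via $c\mapsto abc$, $5\sim 8$ via $c\mapsto cd$, $10\sim 8$ via $b\mapsto bd,\;c\mapsto cd$) and separates the classes using the characteristic subloop $C=C(d)$ and the quotients by $N_{2}$. The genuine problem with your proposal is not a missing step but that your two computational devices, executed faithfully, prove equivalences that the statement denies, and you never notice or resolve the conflict. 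In the stratum where $d$ is central, your central-product calculus gives $\mathbb{Z}_4\ast C_i^4\cong\mathbb{Z}_2\times(\mathbb{Z}_4\ast C_i^3)$ for $i\leq 5$ and $\mathbb{Z}_4\ast C_7^4\cong\mathbb{Z}_2\times(\mathbb{Z}_4\ast C_2^3)$; combined with $\mathbb{Z}_4\ast C_2^3\cong\mathbb{Z}_4\ast C_4^3\cong\mathbb{Z}_4\ast C_5^3$ — a fact asserted in the paper's own proof of Theorem \ref{theorem3.5} — this forces $C_2^4\sim_{\mathbb{Z}_4}C_4^4\sim_{\mathbb{Z}_4}C_5^4\sim_{\mathbb{Z}_4}C_7^4$, i.e.\ it merges $Z_4^4$ with $C_4^4\in Z_1^4$ and $C_5^4\in Z_2^4$. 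In the other stratum, your normalization ``annihilate $[a,b]$ and $[a,c]$ by nuclear translations, leaving only $[b,c]$'' yields exactly \emph{two} classes among $C_8^4,\dots,C_{16}^4$, not three: the translation $b\mapsto bd$ flips $[a,b]$ precisely because $[a,bd]=[a,b][a,d](a,b,d)=-[a,b]$ by Proposition \ref{prop2.5}, and applying $b\mapsto bd$, $c\mapsto cd$ to $C_{13}^4$ (truncated data $(111100)$) produces $(001100)$, the data of $C_{12}^4$ — the very same move the paper uses to prove $10\sim 8$. So your reduction asserts $Z_3^4=Z_5^4$, again contradicting the statement.

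The tension localizes at two concrete computations in the paper's proof. The paper derives $3\sim 4$ from the claim that $[a,abc]=1$ in $C_3^4$, but with $[a,b]=[a,c]=-1$ and $(a,b,c)=-1$ Proposition \ref{prop2.5} gives $[a,abc]=[a,ab][a,c](a,ab,c)=(-1)(-1)(-1)=-1$. It derives $5\sim 8$ from the premise $[a,d]=-1$ in $C_5^4$, although $\lambda(C_5^4)=(0000010100)$ has $\lambda_7=0$ in Theorem \ref{theorem3.5}'s own table, so $d$ is central in $C_5^4$ and the paper's own invariant ($C(d)=L$ versus $C(d)\neq L$, equivalently your functional $[d,\cdot]$ being zero or not) separates $C_5^4$ from $C_8^4$ rather than joining them. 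Consequently your proposal, as written, cannot be completed into a proof of the five-class partition: its individually justified steps output the four-part partition $\{C_1^4,C_3^4,C_6^4\}$, $\{C_2^4,C_4^4,C_5^4,C_7^4\}$, $\{C_8^4,C_9^4,C_{10}^4,C_{11}^4\}$, $\{C_{12}^4,\dots,C_{16}^4\}$. To salvage anything you would have to exhibit an error in the central-product identity or in the translation moves (none is visible; they follow from Proposition \ref{prop2.5} and Lemma \ref{lemma2.15}), or else conclude that the partition displayed in the Corollary is inconsistent with the paper's own Lemma \ref{lemma2.15} and Proposition \ref{prop2.5} and flag the discrepancy explicitly instead of presenting your argument as a proof of the stated result.
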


\begin{proof}
From the Lemma \ref{lemma2.15}, we have to prove that $C_{i}^{4}\cong C_{j}^{4}$ as $2$-spaces if and only if $C_{i}^{4},C_{j}^{4} \in Z_{k}^{4}$ for  some $k=1,\dots,4.$ We will write $i \sim j$ instead $C_{i}^{4} {\sim}_{\Z_{4}} C_{j}^{4}$. 

A characteristic vector of a $2$-space which corresponds to a given code loop $L$ of rank $n$, can be obtained from the characteristic vector $\lambda(L)$ by omitting the first $n$ coordinates of $\lambda(L)$.

Therefore, from the Theorem \ref{theorem3.5}, we have $1 \sim 3 \sim 6 $, $2 \sim 7$, $8 \sim 9 \sim 11$, $12 \sim 14 \sim 15$ and $13\sim 16$.

Let $X=\left\{a,b,c,d\right\}$ be a set of generators of $C_{3}^{4}$ such that $(a,b,c)=[a,b]=[a,c]=[b,c]=-1$, but then $[a,abc]=[b,abc]=1$ and, for a set of generators $Y=\left\{a,b,abc,d\right\}$, we have a characteristic vector from the $2$-space $C_{3}^{4}$ of the form: $\lambda_{Y}(C_{3}^{4})=(100000)=\lambda(C_{4}^{4}).$ Therefore, $3 \sim 4.$

Let $X=\left\{a,b,c,d\right\}$ be a set of generators of $C_{5}^{4}$ such that $(a,b,c)=[a,c]=[a,d]=-1$, but then $[a,cd]=1$ and for a set of generators $Y=\left\{a,b,cd,d\right\}$ we have a characteristic vector of the $2$-space $C_{5}^{4}$ of the form: $\lambda_{Y}(C_{5}^{4})=(001000)=\lambda(C_{8}^{4}).$ Therefore, $5 \sim 8.$

Let $X=\left\{a,b,c,d\right\}$ a set of generators of $C_{10}^{4}$ such that $(a,b,c)=[a,b]=[a,c]=[a,d]=-1$, but then $[a,bd]=[a,cd]=1$ and, for a set of generators $Y=\left\{a,bd,cd,d\right\}$, we have a characteristic vector of the $2$-space $C_{10}^{4}$ of the form: $\lambda_{Y}(C_{10}^{4})=(001000)=\lambda(C_{8}^{4}).$ Therefore, $10 \sim 8.$

Now, we have to prove that all these classes $Z_{1}^{4},\dots ,Z_{5}^{4}$ are nonequivalent. A space $C(N(L))=\{x \in L | [x,N(L)]=1\}$ is a characteristic space of $L$. In our case,\linebreak $N(L)={\bf F}_{2}d$ and for $C=C(d)$ we have three possibilities:
\begin{enumerate}
\item $C=L$,
\item $[C,C]=(C,C,C)=1$,
\item $C\neq L,$ $[C,C]\neq 1$.
\end{enumerate}
Besides, we have the second possibility only for $Z_{3}^{4}$ and the third possibility only for $Z_{4}^{4}$. To prove that $1\not\sim 2$ we note that  $N_{2}(C_{1}^{4})\simeq N_{2}(C_{2}^{4})\simeq {\bf F}_{2}d$, but the  $2$-space $C_{1}^{4}/N_{2}(C_{1}^{4})$ and $C_{2}^{4}/N_{2}(C_{2}^{4})$ are non-isomorphic. Analogously, we prove that $1\not\sim 5$ and $2\not\sim 5$.

\end{proof}

 Let $(a,b,c,d)$ be a set of generators of the code loop of rank $4$ denoted by $C_{i}^{4}$ and $\sigma \in GL_{4}(2)$ an automorphism of $ C_{i}^{4}$. For each $\sigma$ we find a new set of generators of the same code loop, denoted by $(u,v,w,d)$, that is, $a^{\sigma}=u, b^{\sigma}=v, c^{\sigma}=w, d^{\sigma}=d$. Analogously to the study of the orbits of the code loops of rank $3$, Proposition \ref{prop3.2}, we can exhibit all the codes of each orbit associated to a given code loop of code $4$. In fact, just fix a representative characteristic vector  of a certain orbit, runs through $GL_{4}(2)$ and find all possible set of generators $(u,v,w,d)$. So from the square of each generator and the commutators of each pair of generators of $C_{i}^{4}$, we find a characteristic vector that belongs to orbit of the representative characteristic vector fixed.

When is necessary we denote the characteristic vector $\lambda(C_{i}^{k})$ by $\lambda_{i}^{k}$, where $k$ represents the rank of the code loop. We denote by $\mbox{\normalfont Stab}(\lambda_{i}^{k})$ the  group of automorphisms of $GL_{k}(2)$ that fix $\lambda_{i}^{k}$.

For the next proposition, $H \times K$ means the direct product of the groups $H$ and $K$, and $H \rtimes K$ means the semidirect product of the groups $H$ and $K$.

\begin{proposition} In the notation above, we have:

\begin{math}
 \begin{array}{llll}
        \mbox{\normalfont Out} C_{1}^{4} \simeq \mathbb{Z}_{2}^{3}\rtimes GL_{3}(2) &     &  & \mbox{\normalfont Out} C_{9}^{4} \simeq D_{8} \\ 
       \mbox{\normalfont Out} C_{2}^{4} \simeq \mathbb{Z}_{2}^{3}\rtimes S_{4} &  &     & \mbox{\normalfont Out} C_{10}^{4} \simeq S_{3}\times \mathbb{Z}_{2} \\ 
       \mbox{\normalfont Out} C_{3}^{4} \simeq \mathbb{Z}_{2}^{3}\rtimes S_{4} &  &     & \mbox{\normalfont Out} C_{11}^{4} \simeq S_{3}\times \mathbb{Z}_{2} \\ 
       \mbox{\normalfont Out} C_{4}^{4} \simeq \mathbb{Z}_{2}^{3}\rtimes D_{8} &   &   & \mbox{\normalfont Out} C_{12}^{4} \simeq D_{8} \\ 
       \mbox{\normalfont Out} C_{5}^{4} \simeq S_{4}\times \mathbb{Z}_{2} &  &  &    \mbox{\normalfont Out} C_{13}^{4} \simeq S_{4} \\ 
       \mbox{\normalfont Out} C_{6}^{4} \simeq GL_{3}(2) &  &      & \mbox{\normalfont Out} C_{14}^{4} \simeq \mathbb{Z}_{2}^{3}\rtimes S_{3} \\ 
       \mbox{\normalfont Out} C_{7}^{4} \simeq S_{4} &  &  &     \mbox{\normalfont Out} C_{15}^{4} \simeq \mathbb{Z}_{2}^{3}\rtimes S_{3} \\ 
       \mbox{\normalfont Out} C_{8}^{4} \simeq S_{4} &  &  &     \mbox{\normalfont Out} C_{16}^{4} \simeq \mathbb{Z}_{2}^{3} \\ 
\end{array}
\end{math}
\end{proposition}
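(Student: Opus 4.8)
The plan is to reduce the whole proposition to one orbit--stabilizer computation inside a single explicit group and then to upgrade orders to isomorphism types by exhibiting generators, exactly as in Propositions \ref{prop3.2} and \ref{prop3.3}. By the Proposition identifying $G_n^i/{\cal Z}^n$ with $\Out(L_i)$ one has $\Out C_i^4\simeq\stab(\lambda_i^4)$, the stabilizer of the characteristic vector under the action of $GL_4(2)=G_4/{\cal Z}^4$ on the four squares and six commutators. Since every nonassociative rank-$4$ code loop is normalized so that $N(L)={\bf F}_2 d$ carries the single nontrivial associator $(a,b,c)=-1$, each induced automorphism fixes the nucleus generator $d$; hence the relevant ambient group is ${\cal G}=\stab_{GL_4(2)}(d)$, precisely the maps $(a,b,c,d)\mapsto(u,v,w,d)$ used to list the orbits $O_i^4$ before the statement.

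First I would identify ${\cal G}$ explicitly. A map fixing $d$ acts modulo $d$ as an arbitrary element of $GL_3(2)$ on $\langle a,b,c\rangle$ and may add an arbitrary $d$-component to each of $a,b,c$; the kernel of reduction modulo $d$ is the shear group $\langle s_a,s_b,s_c\rangle\simeq\Z_2^3$ with $s_a\colon a\mapsto ad$. Thus
\[ {\cal G}=\stab_{GL_4(2)}(d)\simeq\Z_2^3\rtimes GL_3(2),\qquad |{\cal G}|=8\cdot168=1344. \]
Orbit--stabilizer now gives $|\Out C_i^4|=1344/|O_i^4|$; inserting the orbit sizes of Theorem \ref{theorem3.5} (with $|O_3^4|=|O_2^4|=7$) produces all sixteen orders simultaneously, and $\sum_i|O_i^4|=1024=2^{10}$, the number of square/commutator vectors, is a consistency check.

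It remains to name the groups, which I would do case by case with the shears $s_a,s_b,s_c$ and lifts of the linear maps of Proposition \ref{prop3.3}, using $(ad)^2=a^2d^2[a,d]$ and $[ad,x]=[a,x][d,x]$ from Proposition \ref{prop2.5} to decide which shears stabilize $\lambda_i^4$. For the five loops $C_i^4\cong\Z_2\times C_i^3$ $(i=1,\dots,5)$, where $d$ is central and $d^2=1$, every shear fixes $\lambda$, so $\stab(\lambda_i^4)=\Z_2^3\rtimes\Out C_i^3$; this gives $\Z_2^3\rtimes GL_3(2)$, $\Z_2^3\rtimes S_4$ (twice) and $\Z_2^3\rtimes D_8$ for $i=1,2,3,4$. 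For $i=5$ the recipe yields $\Z_2^3\rtimes S_3$, but the only faithful $3$-dimensional ${\bf F}_2 S_3$-module is the trivial plus the standard summand, so a $\Z_2$ splits off and the complement $\Z_2^2\rtimes S_3\cong\mathrm{AGL}_2(2)\cong S_4$ leaves $S_4\times\Z_2$, as stated. For the remaining loops $d$ is central with $d^2=-1$ ($C_6^4,C_7^4$) or noncentral ($C_8^4,\dots,C_{16}^4$, the cases $C=G_j^3$ of Theorem \ref{theorem3.5}); here $s_a$ survives only when $a^2d^2[a,d]=a^2$ and $[d,b]=[d,c]=1$, so the normal $\Z_2^3$ is cut down to the admissible shears, while the surviving linear part is read off from the square and commutator tables as in Proposition \ref{prop3.2}, yielding $GL_3(2)$, $S_4$, $D_8$, $S_3\times\Z_2$, $\Z_2^3\rtimes S_3$ and the pure shear group $\Z_2^3$ in the claimed distribution.

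The hard part is this identification of isomorphism type rather than of mere order: orbit--stabilizer is automatic once ${\cal G}$ is in hand, but stabilizers of equal order are frequently nonisomorphic (for instance $48=|S_4\times\Z_2|=|\Z_2^3\rtimes S_3|$ and $8=|D_8|=|\Z_2^3|$), so one must exhibit enough relations among the chosen generators to separate them. The one genuinely delicate point is bookkeeping in the noncentral-$d$ cases: deciding exactly which shears remain admissible and how the surviving linear group permutes them, since that is what determines whether the extension splits as a direct or as a proper semidirect product. Beyond the coincidence $S_4\cong\mathrm{AGL}_2(2)\cong\Z_2^2\rtimes S_3$, I expect no conceptual difficulty.
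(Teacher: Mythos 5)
Your skeleton coincides with the paper's: via $G_4^i/{\cal Z}^4\simeq\Out(L_i)$ everything reduces to computing $\stab(\lambda_i^4)$ inside ${\cal G}=\stab_{GL_4(2)}(d)\simeq\Z_2^3\rtimes GL_3(2)$ of order $1344$, and the paper's case $C_1^4$ is literally your description of ${\cal G}$ (the block matrices $\tilde{\varphi}$ with shear column $(\alpha_1,\alpha_2,\alpha_3)$). Your organizational variant, getting all sixteen orders at once from $|{\cal G}|/|O_i^4|$ with the inferred $|O_3^4|=7$ and the check $\sum_i|O_i^4|=2^{10}$, is sound and arguably cleaner than the paper's per-case counts (extension counting from $\stab(\lambda^3)$ in about half the cases, direct enumeration of admissible images $(u,v,w)$ in the rest). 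Your module-theoretic treatment of $C_5^4$ also improves on the paper's bare exhibition of commuting generators, and it is fully rigorous: $H^1(S_3,\mathrm{std})=0$ over ${\bf F}_2$ (inflation--restriction through the normal $\Z_3$, which acts on the standard module without fixed points), so a faithful $3$-dimensional ${\bf F}_2S_3$-module is indeed $\mathrm{std}\oplus\mathrm{triv}$, and $\Z_2^3\rtimes S_3\simeq(\Z_2^2\rtimes S_3)\times\Z_2\simeq S_4\times\Z_2$ as you say.

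The genuine gap is your structural description of the noncentral-$d$ cases: the stabilizer is in general \emph{not} (admissible shears)$\,\rtimes\,$(surviving pure-linear part), because its elements mix shear and linear components. Concretely, for $\lambda_9^4=(0100001000)$ one has $d^2=1$ and $[a,d]=-1$, so $s_a$ breaks the square of $a$ while $s_b,s_c$ (and $s_bs_c$) break the commutators with $a$: no nontrivial pure shear survives, and the pure-linear stabilizer has order $2$ (the identity and $a\mapsto ac$, $b\mapsto bc$, $c\mapsto c$), yet $\stab(\lambda_9^4)\simeq D_8$; the order-$4$ generator $\rho$ in the paper's case $9$ sends $a\mapsto abc$, $b\mapsto bcd$, $c\mapsto cd$ and is irreducibly mixed. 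The same phenomenon occurs for $C_{12}^4$, and in $C_{14}^4,C_{15}^4$ the normal $\Z_2^3$ is not the shear group at all but the translations $a\mapsto ab,\,ac,\,ad$ (only $s_a$ is an admissible shear there, since $d^2[b,d]=-1$ kills $s_b,s_c$), with even the complementary $S_3$ acting on $\{b,c,bcd\}$, hence mixed. Your own order count $1344/|O_i^4|$ would flag the deficit, and your fallback, full enumeration of admissible quadruples $(u,v,w,d)$ as in Proposition \ref{prop3.2}, which is what the paper actually performs in these cases, repairs it; but as written, ``cut $\Z_2^3$ down to admissible shears and read off the surviving linear part'' computes a proper subgroup in at least four of the sixteen cases, so that step must be replaced by the enumeration before the isomorphism types can be claimed.
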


\begin{proof}
\begin{enumerate}


\item Let $(a,b,c,d)$ be a basis of the loop $C_{1}^{4}$ with characteristic  vector \linebreak $\lambda_{1}^{4}=(1110110100)$, hence we have $a^{2}=b^{2}=c^{2}=d^{2}=-1$ and $[a,b]=[a,c]=[a,d]=[b,c]=[b,d]=[c,d]=-1$. We also have that $(a,b,c)$ is a basis of the code loop of rank $3$ with characteristic vector $\lambda_{1}^{3}=(111111)$.

Let $\varphi \in \stab(\lambda_{1}^{3})$. We extend $\varphi$ to $\tilde{\varphi}$ defining: \begin{eqnarray*}
\begin{array}{llll}
\tilde{\varphi}: & a &  & a^{\varphi}d^{\alpha_{1}}=u \\ 
 & b & \longmapsto & b^{\varphi}d^{\alpha_{2}}=v \\ 
 & c &  & c^{\varphi}d^{\alpha_{3}}=w \\ 
 & d &  & d \\ 
\end{array}, \;\;\alpha_{i}\in \{0,1\}, i=1,2,3.\end{eqnarray*}

The matrix which represents $\tilde{\varphi}$ is of the form 
\begin{eqnarray*}
\tilde{\varphi}=\left(\begin{array}{lll|l}
 &  &  & \alpha_{1} \\ 
\multicolumn{3}{c|}{\varphi} & \alpha_{2} \\ 
 &  &  & \alpha_{3} \\ 
\hline
0 & 0 & 0 & 1 \\ 
\end{array}\right).
\end{eqnarray*}
Clearly $\tilde{\varphi} \in GL_{4}(2)$, so it remains to prove that $\tilde{\varphi} \in \stab(\lambda_{1}^{4})$, that is,   $(\lambda_{1}^{4})^{\tilde{\varphi}}=\lambda_{1}^{4}.$ For this, we should find for which $(\alpha_{1},\alpha_{2},\alpha_{3})$ we have the following relations:\\
(a)\; $u^{2}=v^{2}=w^{2}=1$\\
(b)\;$[u,v]=[u,w]=[v,w]=-1$ and $[u,d]=[v,d]=[w,d]=1$

Let's see:

(a)\; Case $\alpha_{i}=0$: $(a^{\varphi})^{2}=(b^{\varphi})^{2}=(c^{\varphi})^{2}=-1$, since $\varphi \in \stab(\lambda_{1}^{3})$.\\
      Case $\alpha_{i}=1$: $(a^{\varphi}d)^{2}=(a^{\varphi})^{2}d^{2}[a^{\varphi},d]=-1$, because $(a^{\varphi})^{2}=-1$ ($\varphi \in \stab(\lambda_{1}^{3})$) and $d^{2}=[a^{\varphi},d]=1$ $([a,d]=[b,d]=[c,d]=1)$.

(b)\; Clearly, we have $[u,d]=[v,d]=[w,d]=1$. Now, we will see that\linebreak $[u,v]=[u,w]=[v,w]=-1$. If $\alpha_{i}=0$, then we have $[a^{\varphi},b^{\varphi}]=-1$, because $\varphi \in \stab(\lambda_{1}^{3})$. If $\alpha_{i}=1$, we have $[a^{\varphi}d,b^{\varphi}d]=[a^{\varphi},b^{\varphi}d][d,b^{\varphi}d](a^{\varphi},b^{\varphi},d)$, but $[d,b^{\varphi}d]=(a^{\varphi},b^{\varphi},d)=1$, then $[a^{\varphi}d,b^{\varphi}d]=[a^{\varphi},b^{\varphi}][a^{\varphi},d]=[a^{\varphi},b^{\varphi}]=-1$. Hence, $[u,v]=-1$. Analogously, we prove the other two cases.      

Therefore, for all $\varphi \in \stab(\lambda_{1}^{3})$ and for any values of $(\alpha_{1},\alpha_{2},\alpha_{3})$, we have $\tilde{\varphi} \in \stab(\lambda_{1}^{4})$. By the Proposition \ref{prop3.3} (groups of automorphisms of code loops of rank $3$), we have $|\stab(\lambda_{1}^{3})|=168$.
Hence $|\stab(\lambda_{1}^{4})|=168.8=1344$.

We consider the following stabilizers of $\lambda_{1}^{4}$:
{
\footnotesize
\[\begin{array}{lll}
\sigma_{1}=\left(\begin{array}{llll}
1 & 0 & 0 & 0 \\ 
1 & 1 & 0 & 0 \\ 
0 & 0 & 1 & 0 \\ 
0 & 0 & 0 & 1 \\ 
\end{array}\right)\;\;&\;\; \sigma_{2}=\left(\begin{array}{llll}
1 & 0 & 0 & 0 \\ 
0 & 1 & 0 & 0 \\ 
1 & 0 & 1 & 0 \\ 
0 & 0 & 0 & 1 \\ 
\end{array}\right)\;\; &\;\; \sigma_{3}=\left(\begin{array}{llll}
1 & 0 & 0 & 0 \\ 
0 & 1 & 0 & 0 \\ 
0 & 0 & 1 & 1 \\ 
0 & 0 & 0 & 1 \\ 
\end{array}\right) \\ 
 
\end{array}\]}
{
\footnotesize
\[\begin{array}{lll}
\sigma_{4}=\left(\begin{array}{llll}
1 & 0 & 0 & 0 \\ 
0 & 1 & 1 & 0 \\ 
0 & 0 & 1 & 0 \\ 
0 & 0 & 0 & 1 \\ 
\end{array}\right)\;\;&\;\; \sigma_{5}=\left(\begin{array}{llll}
1 & 1 & 0 & 0 \\ 
0 & 1 & 0 & 0 \\ 
0 & 0 & 1 & 0 \\ 
0 & 0 & 0 & 1 \\ 
\end{array}\right)\;\; &\;\; \sigma_{6}=\left(\begin{array}{llll}
1 & 0 & 1 & 0 \\ 
0 & 1 & 0 & 0 \\ 
0 & 0 & 1 & 0 \\ 
0 & 0 & 0 & 1 \\ 
\end{array}\right) \\ 
 
\end{array}\]}
{
\footnotesize
\[\begin{array}{lll}
\rho_{1}=\left(\begin{array}{llll}
1 & 0 & 0 & 0 \\ 
0 & 1 & 0 & 1 \\ 
0 & 0 & 1 & 0 \\ 
0 & 0 & 0 & 1 \\ 
\end{array}\right)\;\;&\;\; \rho_{2}=\left(\begin{array}{llll}
1 & 0 & 0 & 0 \\ 
0 & 1 & 0 & 1 \\ 
0 & 0 & 1 & 1 \\ 
0 & 0 & 0 & 1 \\ 
\end{array}\right)\;\; &\;\; \rho_{3}=\left(\begin{array}{llll}
1 & 0 & 0 & 1 \\ 
0 & 1 & 0 & 0 \\ 
0 & 0 & 1 & 0 \\ 
0 & 0 & 0 & 1 \\ 
\end{array}\right) \\ 
 
\end{array}\]}

We have $\sigma_{i}^{2}=2$, $i=1,\dots,6$, $\rho_{i}^{2}=1$, $\rho_{i}\rho_{j}=\rho_{j}\rho_{i}$, $i,j=1,2,3$. Let \linebreak $H=GR\{\sigma_{i}, i=1,\dots,6\}$ and $N=GR\{\rho_{i},i=1,2,3|\rho_{i}^{2}=1, \rho_{i}\rho_{j}=\rho_{j}\rho_{i}\}$, be subgroups of $\stab(\lambda_{1}^{4})$. We know that $H\simeq GL_{3}(2)$, $N \simeq \Z_{2}^{3}$, and we still have $\stab(\lambda_{1}^{4})=HN$ and $H\cap N=\{1\}$. Besides, with direct calculations we prove that $N$ is normal in $\stab(\lambda_{1}^{4})$. Therefore, $\Out(C_{1}^{4})\simeq \Z_{2}^{3}\rtimes GL_{3}(2)$.

\item In this case $\lambda_{2}^{4}=(0000000000)$, that is, $a^{2}=b^{2}=c^{2}=d^{2}=1$ and $[a,b]=[a,c]=[a,d]=[b,c]=[b,d]=[c,d]=1$. The basis $(a,b,c)$ give us the code loop of rank $3$, with $\lambda_{2}^{3}=(000000)$. 

Let $\varphi \in  \stab(\lambda_{2}^{3})$. We extend $\varphi$ to $\tilde{\varphi} \in GL_{4}(2)$, as in the previous case. We must prove that $\tilde{\varphi} \in \stab(\lambda_{2}^{4})$, that is, $(\lambda_{2}^{4})^{\tilde{\varphi}}=\lambda_{2}^{4}.$ In fact, since  $\varphi \in  \stab(\lambda_{2}^{3})$, then

(a)\;  $(a^{\varphi}d^{\alpha_{1}})^{2}=(a^{\varphi})^{2}(d^{\alpha_{1}})^{2}[a^{\varphi},d^{\alpha_{1}}]=1$. Analogously, $(b^{\varphi}d^{\alpha_{2}})^{2}=(c^{\varphi}d^{\alpha_{3}})^{2}=1$.

(b)\; Clearly, $[a^{\varphi}d^{\alpha_{1}},d]=[b^{\varphi}d^{\alpha_{2}},d]=[c^{\varphi}d^{\alpha_{3}},d]=1$.  

Now, we have that  $[a^{\varphi}d^{\alpha_{1}},b^{\varphi}d^{\alpha_{2}}]=[a^{\varphi},b^{\varphi}d^{\alpha_{2}}][d^{\alpha_{1}},b^{\varphi}d^{\alpha_{2}}](a^{\varphi},d^{\alpha_{1}},b^{\varphi}d^{\alpha_{2}})$. Since \linebreak  $[d^{\alpha_{1}},b^{\varphi}d^{\alpha_{2}}]=(a^{\varphi},d^{\alpha_{1}},b^{\varphi}d^{\alpha_{2}}) =1$, and $[a^{\varphi},b^{\varphi}]=[a^{\varphi},d^{\alpha_{2}}]=(a^{\varphi},b^{\varphi},d^{\alpha_{2}})=1$, we obtain $[a^{\varphi}d^{\alpha_{1}},b^{\varphi}d^{\alpha_{2}}]=1$. Similarly,  $[a^{\varphi}d^{\alpha_{1}},c^{\varphi}d^{\alpha_{3}}]=[b^{\varphi}d^{\alpha_{2}},c^{\varphi}d^{\alpha_{3}}]=1$.

We proved that, $\tilde{\varphi} \in \stab(\lambda_{2}^{4})$, for all $\varphi \in \stab(\lambda_{2}^{3})$ and for any values of $(\alpha_{1},\alpha_{2},\alpha_{3})$.

By the Proposition \ref{prop3.3}, we have $|\stab(\lambda_{2}^{3})|=24$.
Hence, $|\stab(\lambda_{2}^{4})|=24.8=192$. 

To prove that $\Out C_{2}^{4} \simeq \Z_{2}^{3}\rtimes S_{4}$, we just need to consider the subgroups $H$ and $N$ of $\stab(\lambda_{2}^{4})$ given respectively by: $$H=GR\{\sigma_{i},i=1,2,3| \sigma_{i}^{2}=1, \sigma_{i}\sigma_{i+1}\sigma_{i}=\sigma_{i+1}\sigma_{i}\sigma_{i+1}, \sigma_{1}\sigma_{3}=\sigma_{3}\sigma_{1}\}\simeq S_{4}$$ and $$N=GR\{\rho_{i},i=1,2,3|\rho_{i}^{2}=1, \rho_{i}\rho_{j}=\rho_{j}\rho_{i}\}\simeq \Z_{2}^{3}.$$

In this case, we consider the generators these groups given as follows:
{
\footnotesize
\[\begin{array}{lll}
\sigma_{1}=\left(\begin{array}{llll}
1 & 0 & 0 & 0 \\ 
0 & 0 & 1 & 0 \\ 
0 & 1 & 0 & 0 \\ 
0 & 0 & 0 & 1 \\ 
\end{array}\right)\;\;&\;\; \sigma_{2}=\left(\begin{array}{llll}
0 & 1 & 0 & 0 \\ 
1 & 0 & 0 & 0 \\ 
0 & 0 & 1 & 0 \\ 
0 & 0 & 0 & 1 \\ 
\end{array}\right)\;\; &\;\; \sigma_{3}=\left(\begin{array}{llll}
1 & 0 & 0 & 0 \\ 
1 & 1 & 0 & 0 \\ 
1 & 0 & 1 & 0 \\ 
0 & 0 & 0 & 1 \\ 
\end{array}\right) \\ 
 
\end{array}\]}
{
\footnotesize
\[\begin{array}{lll}
\rho_{1}=\left(\begin{array}{llll}
1 & 0 & 0 & 0 \\ 
0 & 1 & 0 & 1 \\ 
0 & 0 & 1 & 0 \\ 
0 & 0 & 0 & 1 \\ 
\end{array}\right)\;\;&\;\; \rho_{2}=\left(\begin{array}{llll}
1 & 0 & 0 & 0 \\ 
0 & 1 & 0 & 1 \\ 
0 & 0 & 1 & 1 \\ 
0 & 0 & 0 & 1 \\ 
\end{array}\right)\;\; &\;\; \rho_{3}=\left(\begin{array}{llll}
1 & 0 & 0 & 1 \\ 
0 & 1 & 0 & 0 \\ 
0 & 0 & 1 & 0 \\ 
0 & 0 & 0 & 1 \\ 
\end{array}\right) \\ 
 
\end{array}\]}
We have that $\stab(\lambda_{2}^{4})=HN$, $H\cap N=\{1\}$ and $N$ is a normal subgroup of $\stab(\lambda_{2}^{4})$.


\item In this case $\lambda_{3}^{4}=(0000110100)$ and $\lambda_{3}^{3}=(000111)$. 

We have $(a^{\tilde{\varphi}})^{2}=(a^{\varphi}d^{\alpha_{1}})^{2}=(a^{\varphi})^{2}(d^{\alpha_{1}})^{2}[a^{\varphi},d]=1.$ Analogously, we have \linebreak  $(b^{\tilde{\varphi}})^{2}=(c^{\tilde{\varphi}})^{2}=1$. Then, for any triple $(\alpha_{1},\alpha_{2},\alpha_{3})$, we have $\tilde{\varphi} \in \stab(\lambda_{3}^{4})$. Besides, we know that $|\stab(\lambda_{3}^{3})|=24$, hence  $|\stab(\lambda_{3}^{4})|=24.8=192$.

Analogously to previous case, we prove that  $\Out C_{2}^{4} \simeq \Z_{2}^{3}\rtimes S_{4}$. In this case, we consider the generators of the groups $H\simeq S_{4}$ and $N\simeq \Z_{2}^{3}$, given by:
{
\footnotesize
\[\begin{array}{lll}
\sigma_{1}=\left(\begin{array}{llll}
1 & 0 & 0 & 0 \\ 
0 & 0 & 1 & 0 \\ 
0 & 1 & 0 & 0 \\ 
0 & 0 & 0 & 1 \\ 
\end{array}\right)\;\;&\;\; \sigma_{2}=\left(\begin{array}{llll}
0 & 1 & 0 & 0 \\ 
1 & 0 & 0 & 0 \\ 
0 & 0 & 1 & 0 \\ 
0 & 0 & 0 & 1 \\ 
\end{array}\right)\;\; &\;\; \sigma_{3}=\left(\begin{array}{llll}
1 & 1 & 1 & 0 \\ 
0 & 1 & 0 & 0 \\ 
0 & 0 & 1 & 0 \\ 
0 & 0 & 0 & 1 \\ 
\end{array}\right) \\ 
 
\end{array}\]}
{
\footnotesize
\[\begin{array}{lll}
\rho_{1}=\left(\begin{array}{llll}
1 & 0 & 0 & 0 \\ 
0 & 1 & 0 & 1 \\ 
0 & 0 & 1 & 0 \\ 
0 & 0 & 0 & 1 \\ 
\end{array}\right)\;\;&\;\; \rho_{2}=\left(\begin{array}{llll}
1 & 0 & 0 & 0 \\ 
0 & 1 & 0 & 1 \\ 
0 & 0 & 1 & 1 \\ 
0 & 0 & 0 & 1 \\ 
\end{array}\right)\;\; &\;\; \rho_{3}=\left(\begin{array}{llll}
1 & 0 & 0 & 1 \\ 
0 & 1 & 0 & 0 \\ 
0 & 0 & 1 & 0 \\ 
0 & 0 & 0 & 1 \\ 
\end{array}\right) \\ 
 
\end{array}\]}

\item Let $\lambda_{4}^{4}=(0010100000)$ and $\lambda_{4^{'}}^{3}=(001100)$.  We have that $|\stab(\lambda_{4^{'}}^{3})|=8$ because $\lambda_{4^{'}}^{3} \in O_{4}^{3}$. In fact, we assume $a^{\varphi}=x, b^{\varphi}=y, c^{\varphi}=z $. We have $8$ possibilities for $(x,y,z)$, given by $(a,b,c)$, $(b,a,c)$, $(a,b,ac)$, $(b,a,ac)$, $(a,b,bc)$, $(b,a,bc)$, $(a,b,abc)$, $(b,a,abc)$. In any case $x^{2}=y^{2}=1, z^{2}=-1$, $[x,y]=-1$, $[x,y]=[y,z]=1$. 
Now, we have $(a^{\tilde{\varphi}})^{2}=(a^{\varphi}d^{\alpha_{1}})^{2}=(a^{\varphi})^{2}(d^{\alpha_{1}})^{2}[a^{\varphi},d]=1$. Analogously, we have $(b^{\tilde{\varphi}})^{2}=1$ and $(c^{\tilde{\varphi}})^{2}=-1$. Then, for any triple $(\alpha_{1},\alpha_{2},\alpha_{3})$, we have $\tilde{\varphi} \in \stab(\lambda_{4}^{4})$. Hence  $|\stab(\lambda_{4}^{4})|=8.8=64$.

We consider the following stabilizers of $\lambda_{4}^{4}$:
{
\footnotesize
\[\begin{array}{lll}
\sigma_{1}=\left(\begin{array}{llll}
0 & 1 & 0 & 0 \\ 
1 & 0 & 0 & 0 \\ 
0 & 1 & 1 & 0 \\ 
0 & 0 & 0 & 1 \\ 
\end{array}\right)\;\;&\;\; \sigma_{2}=\left(\begin{array}{llll}
1 & 0 & 0 & 0 \\ 
0 & 1 & 0 & 0 \\ 
1 & 0 & 1 & 0 \\ 
0 & 0 & 0 & 1 \\ 
\end{array}\right) \\ 
 
\end{array}\]}

{
\footnotesize
\[\begin{array}{lll}
\rho_{1}=\left(\begin{array}{llll}
1 & 0 & 0 & 0 \\ 
0 & 1 & 0 & 1 \\ 
1 & 0 & 1 & 1 \\ 
0 & 0 & 0 & 1 \\ 
\end{array}\right)\;\;&\;\; \rho_{2}=\left(\begin{array}{llll}
1 & 0 & 0 & 1 \\ 
0 & 1 & 0 & 0 \\ 
0 & 1 & 1 & 1 \\ 
0 & 0 & 0 & 1 \\ 
\end{array}\right)\;\; &\;\; \rho_{3}=\left(\begin{array}{llll}
1 & 0 & 0 & 0 \\ 
0 & 1 & 0 & 0 \\ 
0 & 0 & 1 & 1 \\ 
0 & 0 & 0 & 1 \\ 
\end{array}\right) \\ 
 
\end{array}\]}

With direct calculations we obtain: $\sigma_{1}^{4}=1$, $\sigma_{2}^{2}=1$, $\sigma_{2}\sigma_{1}\sigma_{2}=\sigma_{1}^{3}$, $\rho_{1}^{2}=\rho_{2}^{2}=\rho_{3}^{2}=1$, $\rho_{i}\rho_{j}=\rho_{j}\rho_{i}$, $i,j=1,2,3$.

Let $K_{1}=GR\{\sigma_{1}, \sigma_{2}\;|\; \sigma_{1}^{4}=1, \sigma_{2}^{2}=1, \sigma_{2}\sigma_{1}\sigma_{2}=\sigma_{1}^{3}\}$ and  $H_{1}=GR\{\rho_{1}, \rho_{2}, \rho_{3}\;|\; \rho_{1}^{2}=\rho_{2}^{2}=\rho_{3}^{2}=1, \rho_{i}\rho_{j}=\rho_{j}\rho_{i},\; i,j=1,2,3\}.$
The elements these two groups are also stabilizers of $\lambda_{4}^{4}$. Indeed, the product of any element of $K_{1}$ by $H_{1}$ is a stabilizer of $\lambda_{4}^{4}$. Hence we have $H_{1}\cap K_{1}=\{1\}$ and $\stab(\lambda_{4}^{4})=K_{1}H_{1}$. 

We know that $K_{1}\simeq D_{8}$ and $H_{1}\simeq \Z_{2}^{3}$. We still have that $K_{1}$ isn't a normal subgroup of $\stab(\lambda_{4}^{4})$, because, for example, for $\rho_{1}\in H_{1}$ and $\sigma_{1} \in K_{1}$ we don't have $\rho_{1}\sigma_{1}\rho_{1} \in K_{1}$, that is, $\rho_{1}K_{1}\rho_{1}\nsubseteq K_{1}$. For the other side, $H_{1}\triangleleft \stab(\lambda_{4}^{4})$.  To prove this, we just do the calculations directly and we use the following relations:

$\sigma_{2}\rho_{1}=\rho_{1}\sigma_{2}$, 
$\sigma_{2}\rho_{2}=\rho_{2}\sigma_{2}$,
$\sigma_{1}\rho_{3}=\rho_{3}\sigma_{1}$,
$\sigma_{2}\rho_{3}=\rho_{3}\sigma_{2}$,
$\rho_{1}\sigma_{1}=\sigma_{1}\rho_{2}$,
$\sigma_{1}\rho_{1}\rho_{3}=\rho_{2}\sigma_{1}$,
$\rho_{1}\rho_{2}\sigma_{1}=\sigma_{1}\rho_{2}\rho_{1}\rho_{3}$,
$\sigma_{1}\rho_{1}=\rho_{2}\rho_{3}\sigma_{1}$,
$\sigma_{1}\rho_{2}\rho_{3}=\rho_{1}\rho_{3}\sigma_{1}$,
$\sigma_{1}\rho_{1}\rho_{2}=\rho_{2}\rho_{3}\rho_{1}\sigma_{1}$.

Therefore, $\stab(\lambda_{4}^{4}) \simeq \Z_{2}^{3}\rtimes D_{8}.$

\item Let $\lambda_{5}^{4}=(0000010100)$ and $\lambda_{5^{'}}^{3}=(000011)$. In this case,  $|\stab(\lambda_{5^{'}}^{3})|=6$ because $\lambda_{5^{'}}^{3} \in O_{5}^{3}$. The possibilities for $(x=a^{\varphi}, y=b^{\varphi}, z=c^{\varphi})$ are $(a,b,c)$, $(a,ab,c)$, $(b,a,c)$, $(b,ab,c)$, $(ab,a,c)$, $(ab,b,c)$. Clearly, $|\stab(\lambda_{5}^{4})|=6.8=48$.\\
We consider \\

{\footnotesize 
\begin{math}\begin{array}{llll}
\sigma_{1}=\left(\begin{array}{llll}
1 & 0 & 0 & 0 \\ 
1 & 1 & 0 & 0 \\ 
0 & 0 & 1 & 0 \\ 
0 & 0 & 0 & 1 \\ 
\end{array}\right)&\sigma_{2}=\left(\begin{array}{llll}
1 & 1 & 0 & 1 \\ 
0 & 1 & 0 & 0 \\ 
0 & 0 & 1 & 0 \\ 
0 & 0 & 0 & 1 \\ 
\end{array}\right)&\sigma_{3}=\left(\begin{array}{llll}
1 & 0 & 0 & 0 \\ 
1 & 1 & 0 & 1 \\ 
0 & 0 & 1 & 0 \\ 
0 & 0 & 0 & 1 \\ 
\end{array}\right)&\rho=\left(\begin{array}{llll}
1 & 0 & 0 & 0 \\ 
0 & 1 & 0 & 0 \\ 
0 & 0 & 1 & 1 \\ 
0 & 0 & 0 & 1 \\ 
\end{array}\right)
\end{array}\end{math}}

We have that $\rho$ and $\sigma_{i}$, $i=1,2,3$, are stabilizers of $\lambda_{5}^{4}$ of order 2. We still have  $\sigma_{1}\sigma_{3}=\sigma_{3}\sigma_{1}$, $\sigma_{1}\sigma_{2}\sigma_{1}=\sigma_{2}\sigma_{1}\sigma_{2}$, $\sigma_{2}\sigma_{3}\sigma_{2}=\sigma_{3}\sigma_{2}\sigma_{3}$ and $\rho \sigma = \sigma \rho$, for each $ \sigma \in S_{4}$. The group generated by $\sigma_{i}$, $i=1,2,3$ is isomorphic to $S_{4}$, $GR\{\rho\}\simeq \Z_{2}$. Therefore,
$\Out C_{5}^{4} \simeq S_{4}\times \Z_{2}$. 

\item Let $\lambda_{6}^{4}=(1111110100)$ and $\lambda_{1}^{3}=(111111)$. We have $|\stab(\lambda_{1}^{3})|=168$. In this case, only for $(\alpha_{1},\alpha_{2},\alpha_{3})=(0,0,0)$ we will have $\tilde{\varphi} \in \stab(\lambda_{6}^{4})$. In fact, $(a^{\varphi}d)^{2}=(b^{\varphi}d)^{2}=(c^{\varphi}d)^{2}=1$, since $(a^{\varphi})^{2}=(b^{\varphi})^{2}=(c^{\varphi})^{2}=-1$ and $d^{2}=-1$. Therefore, $|\stab(\lambda_{6}^{4})|=168$. We note that $a^{\varphi}, b^{\varphi}, c^{\varphi} \in \{a,b,c,ab,ac,bc,abc\}$. 

Let the following stabilizers of $\lambda_{6}^{4}$: $\sigma_{1}, \sigma_{2},\dots, \sigma_{6} $, represented in  matrix form by:\\

{
\footnotesize
\[\begin{array}{lll}
\sigma_{1}=\left(\begin{array}{llll}
1 & 1 & 0 & 0 \\ 
0 & 1 & 0 & 0 \\ 
0 & 0 & 1 & 0 \\ 
0 & 0 & 0 & 1 \\ 
\end{array}\right)\;\;&\;\; \sigma_{2}=\left(\begin{array}{llll}
1 & 0 & 1 & 0 \\ 
0 & 1 & 0 & 0 \\ 
0 & 0 & 1 & 0 \\ 
0 & 0 & 0 & 1 \\ 
\end{array}\right)\;\; &\;\; \sigma_{3}=\left(\begin{array}{llll}
1 & 0 & 0 & 0 \\ 
1 & 1 & 0 & 0 \\ 
0 & 0 & 1 & 0 \\ 
0 & 0 & 0 & 1 \\ 
\end{array}\right) \\ 
 
\end{array}\]}

{
\footnotesize
\[\begin{array}{lll}
\sigma_{4}=\left(\begin{array}{llll}
1 & 0 & 0 & 0 \\ 
0 & 1 & 1 & 0 \\ 
0 & 0 & 1 & 0 \\ 
0 & 0 & 0 & 1 \\ 
\end{array}\right)\;\;&\;\; \sigma_{5}=\left(\begin{array}{llll}
1 & 0 & 0 & 0 \\ 
0 & 1 & 0 & 0 \\ 
1 & 0 & 1 & 0 \\ 
0 & 0 & 0 & 1 \\ 
\end{array}\right)\;\; &\;\; \sigma_{6}=\left(\begin{array}{llll}
1 & 0 & 0 & 0 \\ 
0 & 1 & 0 & 0 \\ 
0 & 1 & 1 & 0 \\ 
0 & 0 & 0 & 1 \\ 
\end{array}\right) \\ 
 
\end{array}\]}

We have $a^{\sigma_{1}}=ab$, $b^{\sigma_{1}}=b$, $c^{\sigma_{1}}=c$ and $d^{\sigma_{1}}=d$. Hence $\sigma_{1} \in \stab(\lambda_{6}^{4})$. Analogously, we see that $\sigma_{2},\dots, \sigma_{6} \in \stab(\lambda_{6}^{4})$. Any product of the $\sigma_{i}'$s, $i=1,\dots,6$ is a stabilizer of $\lambda_{6}^{4}$. Hence $GR\{\sigma_{1},\dots,\sigma_{6}\}=\stab(\lambda_{6}^{4})$, but $GR\{\sigma_{1},\dots,\sigma_{6}\}\simeq GL_{3}(2)$, therefore,
$\stab(\lambda_{6}^{4})\simeq GL_{3}(2) $.

\item In this case, we have $\lambda_{7}^{4}=(0001000000)$, $\lambda_{2}^{3}=(000000)$ and $|\stab(\lambda_{2}^{3})|=24$. \linebreak Analogously to the previous case, we only have $\tilde{\varphi} \in \stab(\lambda_{7}^{4})$ for $(\alpha_{1},\alpha_{2},\alpha_{3})=(0,0,0)$. Therefore, $|\stab(\lambda_{7}^{4})|=24$.

Let $\sigma_{1}={\footnotesize \left(\begin{array}{llll}
1 & 0 & 0 & 0 \\ 
0 & 0 & 1 & 0 \\ 
0 & 1 & 0 & 0 \\ 
0 & 0 & 0 & 1 \\ 
\end{array}\right)}$, $\sigma_{2} = {\footnotesize \left(\begin{array}{llll}
0 & 1 & 0 & 0 \\ 
1 & 0 & 0 & 0 \\ 
0 & 0 & 1 & 0 \\ 
0 & 0 & 0 & 1 \\ 
\end{array}\right)}$, $\sigma_{3} = {\footnotesize \left(\begin{array}{llll}
1 & 0 & 0 & 0 \\ 
1 & 1 & 0 & 0 \\ 
1 & 0 & 1 & 0 \\ 
0 & 0 & 0 & 1 \\ 
\end{array}\right)}$. We have:

$
\sigma_{1}^{2}=\sigma_{2}^{2}=\sigma_{3}^{2}=1;$\; 
$\sigma_{1} \sigma_{2} \sigma_{1}=\sigma_{2} \sigma_{1} \sigma_{2};$\;
$\sigma_{3} \sigma_{2} \sigma_{3}=\sigma_{2} \sigma_{3} \sigma_{2}$ and
$\sigma_{1}\sigma_{3}=\sigma_{3}\sigma_{1}.\label{relC47-4}
$

 As $\sigma_{1}, \sigma_{2}, \sigma_{3} \in \stab(\lambda_{7}^{4}),$ then $\stab(\lambda_{7}^{4})=GR\{\sigma_{1}, \sigma_{2}, \sigma_{3}\;|\; \sigma_{1}^{2}=\sigma_{2}^{2}=\sigma_{3}^{2}=1, \sigma_{1} \sigma_{2} \sigma_{1}=\sigma_{2} \sigma_{1} \sigma_{2}, \sigma_{3} \sigma_{2} \sigma_{3}=\sigma_{2} \sigma_{3} \sigma_{2}, \sigma_{1}\sigma_{3}=\sigma_{3}\sigma_{1}\},$ but the group generated by $\sigma_{1}, \sigma_{2}, \sigma_{3}$ and which satisfies the relations above is isomorphic to the permutation group of $4$ elements, $S_{4}$. Therefore, $\stab(\lambda_{7}^{4})\simeq S_{4}$.

\item Let $\lambda_{8}^{4}=(0000001000)$ and  $\lambda_{2}^{3}=(000000)$. We suppose that  $\sigma \in \stab(\lambda_{8}^{4})$ and \linebreak $a^{\sigma}=u, b^{\sigma}=v, c^{\sigma}=w,d^{\sigma}=d$. Then 
\begin{eqnarray}
u^{2}=v^{2}=w^{2}=1;\;\;d^{2}=1\label{quadC48}\\
\left[u,v\right]=\left[u,w\right]=1\label{comC48-1}\\
\left[v,w\right]=\left[v,d\right]=\left[w,d\right]=1\label{comC48-2}\\
\left[u,d\right]=-1\label{comC48-3}
\end{eqnarray}
By (\ref{quadC48}) and (\ref{comC48-3}) we have $u \in \{a, ab, ac, abcd\}$ and by (\ref{quadC48}) and (\ref{comC48-2}) we have \linebreak $v,w \in \{b, c, bc, bd, cd, bcd\}$. Hence, by (\ref{comC48-1}) we obtain all the possible possibilities for $(u,v,w)$. For example, for $u=a$ we have $(u,v,w)= (a,b,c),$ $(a,b,bcd),$ $(a,c,b),$ $(a,c,bcd),$ $(a,bcd,b)$ or $(a,bcd,c)$. For any $u$ we obtain $6$ different basis. Therefore, in total we have $24$  different basis and thus, $|\stab(\lambda_{8}^{4})|=24$.

To prove that $\Out(C_{8}^{4})\simeq S_{4}$, we just need to consider the following stabilizers:

$\sigma_{1}={\footnotesize \left(\begin{array}{llll}
1 & 0 & 0 & 0 \\ 
0 & 1 & 0 & 0 \\ 
0 & 1 & 1 & 1 \\ 
0 & 0 & 0 & 1 \\ 
\end{array}\right)}$, $\sigma_{2} = {\footnotesize \left(\begin{array}{llll}
1 & 0 & 0 & 0 \\ 
0 & 1 & 1 & 1 \\ 
0 & 0 & 1 & 0 \\ 
0 & 0 & 0 & 1 \\ 
\end{array}\right)}$, $\sigma_{3} = {\footnotesize \left(\begin{array}{llll}
1 & 1 & 0 & 0 \\ 
0 & 1 & 0 & 0 \\ 
0 & 1 & 1 & 0 \\ 
0 & 0 & 0 & 1 \\ 
\end{array}\right)}$.

\item Let $\lambda_{9}^{4}=(0100001000)$ and $\sigma \in \stab(\lambda_{9}^{4})$ defined as above. In this case:
\begin{eqnarray}
u^{2}=w^{2}=d^{2}=1;\;\;v^{2}=-1 \label{quadC49}\\
\left[u,v\right]=\left[u,w\right]=1 \label{comC49-1}\\
\left[v,w\right]=\left[v,d\right]=\left[w,d\right]=1 \label{comC49-2}\\
\left[u,d\right]=-1 \label{comC49-3}
\end{eqnarray}
By (\ref{quadC49}) and (\ref{comC49-3}), we have $u \in \{a, ac, abc, abd\}$ and by (\ref{quadC49}) and (\ref{comC49-2}) we have \linebreak $v \in \{b, bc, bd, bcd\}$ and $w \in \{c, cd\}$. Now, by (\ref{comC49-1}) we have that $(u,v,w)$ are given by $(a,b,c)$, $(a,bcd,c)$, $(ac,bc,c)$, $(ac,bd,c)$, $(abc,bd,cd)$, $(abc,bcd,cd)$, $(abd,b,cd)$,\linebreak $(abd,bc,cd)$. Therefore, $|\stab(\lambda_{9}^{4})|=8$.

Let $\sigma={\footnotesize \left(\begin{array}{llll}
1 & 0 & 0 & 0 \\ 
0 & 1 & 1 & 1 \\ 
0 & 0 & 1 & 0 \\ 
0 & 0 & 0 & 1 \\ 
\end{array}\right)}$ and $\rho = {\footnotesize \left(\begin{array}{llll}
1 & 1 & 1 & 0 \\ 
0 & 1 & 1 & 1 \\ 
0 & 0 & 1 & 1 \\ 
0 & 0 & 0 & 1 \\ 
\end{array}\right)}$. With direct calculations we obtain that the orders of $\sigma$ and $\rho$ are $2$ and $4$, respectively, and the relation $\sigma \rho \sigma^{-1}=\rho^{-1}$ is valid, or better, $\rho \sigma = \sigma \rho^{3}$. We have that $\sigma$ and $\rho$ stabilizes $\lambda_{9}^{4}$.

 Hence $GR\{\sigma, \rho \;|\; \rho^{4}=1,\; \sigma^{2}=1,\;  \sigma \rho \sigma^{-1}=\rho^{-1}\} = \stab(\lambda_{9}^{4})$. Therefore, $\stab(\lambda_{9}^{4}) \simeq D_{8}.$


\item Let $\lambda_{10}^{4}=(0001111000)$ and  $\lambda_{5^{''}}^{3}=(000110)$. We have $|\stab(\lambda_{5^{''}}^{3})|=6$. In fact, if $\varphi \in \stab(\lambda_{5^{''}}^{3})$, then the possibilities for $\lambda_{5^{''}}^{3}$ are $(a^{\varphi}, b^{\varphi}, c^{\varphi})=(a,bc,c)$, $(a,c,bc)$, $(a,b,bc)$, $(a,bc,b)$, $(a,c,b)$ or $(a,b,c)$. 

Let $\tilde{\varphi}$ as before and we suppose that $(\alpha_{1},\alpha_{2},\alpha_{3})=(0,0,0)$. In this case, we have 6 different stabilizers of $\lambda_{10}^{4}$  of the form  $(a^{\varphi}, b^{\varphi}, c^{\varphi}, d)$, since $a^{\varphi}=a$, $b^{\varphi}=b,c$ or $bc$
and  $c^{\varphi}=b,c$ or $bc$. Now, assuming $(\alpha_{1},\alpha_{2},\alpha_{3})=(1,0,0)$, we have that $\tilde{\varphi}$ is  $(a^{\tilde{\varphi}},b^{\tilde{\varphi}},c^{\tilde{\varphi}},d^{\tilde{\varphi}})=(a^{\varphi}d, b^{\varphi}, c^{\varphi}, d)$ and hence, $(a^{\varphi}d)^{2}=(b^{\varphi})^{2}=(c^{\varphi})^{2}=1$, $[a^{\varphi}d,b^{\varphi}]=[a^{\varphi}d,c^{\varphi}]=[a^{\varphi}d,d]=-1$ and $[b^{\varphi},d]=[c^{\varphi},d]=[b^{\varphi},c^{\varphi}]=1$. Hence, we have more 6 stabilizers of $\lambda_{10}^{4}$.

For any other triple $(\alpha_{1},\alpha_{2},\alpha_{3})$ we will not have more stabilizers of $\lambda_{10}^{4}$, because if, for example, $(\alpha_{1},\alpha_{2},\alpha_{3})=(0,1,0)$ we will have $b^{\tilde{\varphi}}=b^{\varphi}d$, but $(b^{\varphi}d)^{2}=-1$ since $d^{2}=-1$ and $(b^{\varphi})^{2}=[b^{\varphi},d]=1$. The analysis is analogous for the other possibilities of $(\alpha_{1},\alpha_{2},\alpha_{3})$. Therefore, $|\stab(\lambda_{10}^{4})|=12$.

We consider the following stabilizers of $\lambda_{10}^{4}$:
{
\footnotesize
\[\begin{array}{lll}
\sigma_{1}=\left(\begin{array}{llll}
1 & 0 & 0 & 0 \\ 
0 & 1 & 0 & 0 \\ 
0 & 1 & 1 & 0 \\ 
0 & 0 & 0 & 1 \\ 
\end{array}\right)\;\;&\;\; \sigma_{2}=\left(\begin{array}{llll}
1 & 0 & 0 & 0 \\ 
0 & 1 & 1 & 0 \\ 
0 & 0 & 1 & 0 \\ 
0 & 0 & 0 & 1 \\ 
\end{array}\right)\;\; &\;\; \rho=\left(\begin{array}{llll}
1 & 0 & 0 & 1 \\ 
0 & 1 & 0 & 0 \\ 
0 & 0 & 1 & 0 \\ 
0 & 0 & 0 & 1 \\ 
\end{array}\right) \\ 
 
\end{array}\]}
We have $\sigma_{1}^{2}=\sigma_{2}^{2}=\rho^{2}=1$ and $\sigma \rho = \rho \sigma$, for each $ \sigma \in S_{3}.\Z_{2}$.

 Therefore, $\Out C_{10}^{4} \simeq S_{3}\times \Z_{2}$.

\item Let $\lambda_{11}^{4}=(0001001000)$. We have $u^{2}=v^{2}=w^{2}=1, d^{2}=-1$,\linebreak $[u,v]=[u,w]=1$, $[v,w]=[v,d]=[w,d]=1$ and $[u,d]=-1$. Thus, we obtain  $u\in \{a,ab,ac,ad,abd,acd\}$ and $v,w \in \{b,c,bc\}$. Hence, $|\stab(\lambda_{11}^{4})|=12$. 

We consider the following stabilizers of $\lambda_{11}^{4}$:

$\sigma={\footnotesize\left(\begin{array}{llll}
1 & 0 & 0 & 0 \\ 
0 & 0 & 1 & 0 \\ 
0 & 1 & 0 & 0 \\ 
0 & 0 & 0 & 1 \\ 
\end{array}\right)}$,  $\theta = {\footnotesize \left(\begin{array}{llll}
1 & 1 & 0 & 0 \\ 
0 & 1 & 0 & 0 \\ 
0 & 1 & 1 & 0 \\ 
0 & 0 & 0 & 1 \\ 
\end{array}\right)}$ e $\rho = {\footnotesize \left(\begin{array}{llll}
1 & 0 & 0 & 0 \\ 
0 & 1 & 0 & 0 \\ 
0 & 0 & 1 & 0 \\ 
0 & 0 & 0 & 1 \\ 
\end{array}\right)}$. 

 With  simple calculations we obtain $\sigma^{2}=\theta^{2}=\rho^{2}=1$, $\sigma \theta \sigma = \theta \sigma \theta$, $\sigma \theta \neq \theta \sigma$, $\sigma \rho=\rho \sigma$, $\theta \rho=\rho \theta$, $\sigma \theta \rho=\rho \sigma \theta$, $\sigma \theta \sigma \rho=\rho \sigma \theta \sigma$ e $\theta \sigma \rho=\rho \theta \sigma$. All these elements are stabilizers of $\lambda_{11}^{4}$. Let $H=GR\{\sigma, \theta \;|\; \sigma^{2}=\theta^{2}, \sigma \theta \sigma = \theta \sigma \theta\}$ and \linebreak $K=GR\{\rho\;|\; \rho^{2}=1\}$. Thus, from the before calculations, any stabilizer of $\lambda_{11}^{4}$ is written uniquely as the product of an element of $H$ by an element of $K$ and $xy=yx,$ for all $ x \in H, y \in K$. Besides, we know that $H \cong S_{3}$ and $K\cong \Z_{2}$. Therefore, we have  $\stab(\lambda_{11}^{4}) \simeq S_{3}\times \Z_{2}$.


\item Let $\lambda_{12}^{4}=(0000001100)$. We have $u^{2}=v^{2}=w^{2}=d^{2}=1$, $[u,v]=[u,w]=[v,d]=[w,d]=1$ and $[u,d]=[v,w]=-1$. Thus, we obtain $u\in \{a,ab,ac,abc\}$ and $v,w \in \{b,c\}$. Hence, $|\stab(\lambda_{12}^{4})|=8$.

In this case, we consider \;\;$\sigma={\footnotesize \left(\begin{array}{llll}
1 & 0 & 0 & 0 \\ 
0 & 0 & 1 & 0 \\ 
0 & 1 & 0 & 0 \\ 
0 & 0 & 0 & 1 \\ 
\end{array}\right)}$ \;\;and\;\; $\rho ={\footnotesize  \left(\begin{array}{llll}
1 & 1 & 0 & 0 \\ 
0 & 0 & 1 & 0 \\ 
0 & 1 & 0 & 0 \\ 
0 & 0 & 0 & 1 \\ 
\end{array}\right)}$, \\stabilizers of $\stab(\lambda_{12}^{4})$. Analogously the analysis of the case 9, we have \linebreak $GR\{\sigma, \rho \;|\; \rho^{4}=1,\; \sigma^{2}=1,\;  \sigma \rho \sigma^{-1}=\rho^{-1}\} = \stab(\lambda_{12}^{4})$. Therefore, $\stab(\lambda_{12}^{4}) \simeq ~D_{8}.$


\item Let $\lambda_{13}^{4}=(0110111100)$. In this case, $u^{2}=d^{2}=1$, $v^{2}=w^{2}=-1$, $[u,v]=[u,w]=[u,d]=[v,w]=-1$, $[v,d]=[w,d]=1$. Hence, we have $u \in \{a,ab,ac,abc\}$ and $v,w \in \{b,c,bc\}$. Thus, $|\stab(\lambda_{13}^{4})|=24$.

Here the required generators are given by:
{
\footnotesize
\[\begin{array}{lll}
\sigma_{1}=\left(\begin{array}{llll}
1 & 0 & 0 & 0 \\ 
0 & 1 & 0 & 0 \\ 
0 & 1 & 1 & 0 \\ 
0 & 0 & 0 & 1 \\ 
\end{array}\right)\;\;&\;\; \sigma_{2}=\left(\begin{array}{llll}
1 & 0 & 0 & 0 \\ 
0 & 0 & 1 & 0 \\ 
0 & 1 & 0 & 0 \\ 
0 & 0 & 0 & 1 \\ 
\end{array}\right)\;\; &\;\; \sigma_{3}=\left(\begin{array}{llll}
1 & 1 & 0 & 0 \\ 
0 & 1 & 0 & 0 \\ 
0 & 1 & 1 & 0 \\ 
0 & 0 & 0 & 1 \\ 
\end{array}\right) \\ 
 
\end{array}\]}
Analogously to the previous cases, we prove that $\Out(C_{13}^{4})\simeq S_{4}$.

\item For $\lambda_{14}^{4}=(0001001100)$ we have $u^{2}=v^{2}=w^{2}=1$, $d^{2}=-1$, $[u,v]=[u,w]=1$, $[v,d]=[w,d]=1$ and $[u,d]=[v,w]=-1$. Hence, $u \in \{a,ab,ac,ad,abc,abd,acd,abcd\}$ and $v,w \in \{b,c,bcd\}.$ Therefore, $|\stab(\lambda_{14}^{4})|=48$. 

We consider the following generators for $\Z_{2}^{3}$:
{
\footnotesize
\[\begin{array}{lll}
\rho_{1}=\left(\begin{array}{llll}
1 & 1 & 0 & 0 \\ 
0 & 1 & 0 & 0 \\ 
0 & 0 & 1 & 0 \\ 
0 & 0 & 0 & 1 \\ 
\end{array}\right)\;\;&\;\; \rho_{2}=\left(\begin{array}{llll}
1 & 0 & 0 & 1 \\ 
0 & 1 & 0 & 0 \\ 
0 & 0 & 1 & 0 \\ 
0 & 0 & 0 & 1 \\ 
\end{array}\right)\;\; &\;\; \rho_{3}=\left(\begin{array}{llll}
1 & 0 & 1 & 0 \\ 
0 & 1 & 0 & 0 \\ 
0 & 0 & 1 & 0 \\ 
0 & 0 & 0 & 1 \\ 
\end{array}\right) \\ 
 
\end{array}\]}
For $S_{3}$, we consider:{
\footnotesize
$\begin{array}{ll}
\sigma_{1}=\left(\begin{array}{llll}
1 & 0 & 0 & 0 \\ 
0 & 0 & 1 & 0 \\ 
0 & 1 & 0 & 0 \\ 
0 & 0 & 0 & 1 \\ 
\end{array}\right)\;\;&\;\; \sigma_{2}=\left(\begin{array}{llll}
1 & 1 & 0 & 1 \\ 
0 & 1 & 0 & 0 \\ 
0 & 1 & 1 & 1 \\ 
0 & 0 & 0 & 1 \\ 
\end{array}\right) \\ 
 
\end{array}$}

With the required calculations we obtain $\Out C_{14}^{4} \simeq \Z_{2}^{3}\rtimes S_{3}$.

\item For $\lambda_{15}^{4}=(1001001100)$ we have $u^{2}=-1$, $v^{2}=w^{2}=1$, $d^{2}=-1$, $[u,v]=[u,w]=[v,d]=[w,d]=1$ and $[u,d]=[v,w]=-1$. Thus, $u \in \{a,ab,ac,ad,abc,abd,acd,abcd\}$ and $v,w \in \{b,c,bcd\}.$ Therefore, $|\stab(\lambda_{15}^{4})|=48$. 

We consider the following generators for $\Z_{2}^{3}$:
{
\footnotesize
\[\begin{array}{lll}
\rho_{1}=\left(\begin{array}{llll}
1 & 1 & 0 & 0 \\ 
0 & 1 & 0 & 0 \\ 
0 & 0 & 1 & 0 \\ 
0 & 0 & 0 & 1 \\ 
\end{array}\right)\;\;&\;\; \rho_{2}=\left(\begin{array}{llll}
1 & 0 & 0 & 1 \\ 
0 & 1 & 0 & 0 \\ 
0 & 0 & 1 & 0 \\ 
0 & 0 & 0 & 1 \\ 
\end{array}\right)\;\; &\;\; \rho_{3}=\left(\begin{array}{llll}
1 & 0 & 1 & 0 \\ 
0 & 1 & 0 & 0 \\ 
0 & 0 & 1 & 0 \\ 
0 & 0 & 0 & 1 \\ 
\end{array}\right) \\ 
 
\end{array}\]}
For $S_{3}$, we consider:{
\footnotesize
$\begin{array}{ll}
\sigma_{1}=\left(\begin{array}{llll}
1 & 0 & 0 & 0 \\ 
0 & 0 & 1 & 0 \\ 
0 & 1 & 0 & 0 \\ 
0 & 0 & 0 & 1 \\ 
\end{array}\right)\;\;&\;\; \sigma_{2}=\left(\begin{array}{llll}
1 & 1 & 0 & 1 \\ 
0 & 1 & 0 & 0 \\ 
0 & 1 & 1 & 1 \\ 
0 & 0 & 0 & 1 \\ 
\end{array}\right) \\ 
 
\end{array}$}

In this case, we also obtain $\Out C_{15}^{4} \simeq \Z_{2}^{3}\rtimes S_{3}$.

\item Let $\lambda_{16}^{4}=(0001111100)$, then $u^{2}=v^{2}=w^{2}=1$, $d^{2}=-1$, $[u,v]=[u,w]=-1$, $[u,d]=[v,w]=-1$ and $[v,d]=[w,d]=1$, and hence, $u \in \{a,abc,ad,abcd\}$ and $v,w \in \{b,c\}$. Therefore, $|\stab(\lambda_{16}^{4})|=8$. 

Let the following stabilizers of $\lambda_{16}^{4}$:

$\sigma={\footnotesize\left(\begin{array}{llll}
1 & 0 & 0 & 0 \\ 
0 & 0 & 1 & 0 \\ 
0 & 1 & 0 & 0 \\ 
0 & 0 & 0 & 1 \\ 
\end{array}\right)}$,  $\theta = {\footnotesize \left(\begin{array}{llll}
1 & 0 & 0 & 1 \\ 
0 & 1 & 0 & 0 \\ 
0 & 0 & 1 & 0 \\ 
0 & 0 & 0 & 1 \\ 
\end{array}\right)}$ and $\rho = {\footnotesize \left(\begin{array}{llll}
1 & 1 & 1 & 0 \\ 
0 & 0 & 1 & 0 \\ 
0 & 1 & 0 & 0 \\ 
0 & 0 & 0 & 1 \\ 
\end{array}\right)}$. 

We have $\sigma^{2}=\theta^{2}=\rho^{2}=1$ and $\sigma \theta=\theta \sigma$, $\sigma \rho=\rho \sigma$ and $\sigma \rho=\rho \sigma$.  Clearly, $\stab(\lambda_{16}^{4})\simeq \Z_{2}^{3}$.


\end{enumerate}

\end{proof}

\section{Representations of Code Loops}

First of all, we defined code loops using double even codes fixed. Now, we fix a code loop $L$ and we want to determine the double even codes $V$ such that $L\simeq L(V)$.

A representation of a given code loop $L$ is a double even code $V \subseteq {{\bf F}_{2}^{m}}$ such that $L \simeq L(V)$. The degree of a representation is the number $m$.

We notice that there are many different representations for a same code loop. We prove by the Theorems \ref{theorem4.4} and \ref{theorem4.6} below that, there are representations of nonassociatives code loops of rank $3$ and $4$ such that the degree of each representation is the smallest possible.

\begin{definition}
A representation $V$ is called basic if the degree of $V$ is minimal.
\end{definition}

We identify the ${\bf F}_{2}-$space ${{\bf F}_{2}^{m}}$ as the set of all subsets of $I_{m}=\left\{1,\dots,m\right\}$ and we define a relation of  equivalence $\sim$ on $I_{m}$:
 $i \sim j$ if and only if  $\left\{i,j\right\} \cap v = \left\{i,j\right\}$ or $\left\{i,j\right\} \cap v = \emptyset$, for all $ v \in V$.

 We notice that this definition  is equivalent to: $i \sim j$ if and only if $\left\{i,j\right\} \cap ~ v_{k} = \left\{i,j\right\}$ or $\left\{i,j\right\} \cap ~ v_{k} = \emptyset$, $k=1,\dots,s$ and $\left\{v_{1},\dots,v_{s}\right\}$ is a basis of $V$.
 
 We will consider only representations such that, for any equivalence classes $X$, we have $|X| < 8.$ We call this representations by reduced representations.

Our main problem is to find all basic representations for a given code loop.

\begin{definition}
For a given representation $V$ and all the equivalence classes $X_{1},...,X_{r},$  the {\bf type} of $V$ is a vector $(|X_{1}|,...,|X_{r}|)$ such that $|X_{1}| \leq |X_{2}| \leq ...\leq |X_{r}|$.
\end{definition} 

\begin{definition}
Let $V_{1}$ and $V_{2}$ be double even codes of ${\bf F}_{2}^{m}$. We say that $V_{1}$ and $V_{2}$ are  isomorphic even codes if and only if there is a bijection $\varphi \in S_{m}$ such that $V_{1}^{\varphi}=V_{2}$.
\end{definition}

\begin{theorem}\label{theorem4.4}
The code loops $C_{1}^{3},\dots,C_{5}^{3}$ have the following basic representations $V_{1},\dots,V_{5}$, which are given by\\
$V_{1}=\left\langle (1234),(1256),(1357)\right\rangle,$\\
$V_{2}=\left\langle (12345678),(12349,10,11,12),(15679,10,11,13)\right\rangle,$\\
$V_{3}=\left\langle (12345678),(1234569,10),(1234579,11)\right\rangle,$\\
$V_{4}=\left\langle (1234),(1256789,10,11,12,13,14),(1356789,10,11,15,16,17)\right\rangle,$\\
$V_{5}=\left\langle (12345678910,11,12),(1-8,13,14,15,16),(1-5,9,10,11,13,14,15,17)\right\rangle.$
\end{theorem}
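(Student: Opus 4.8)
The plan is to establish two things for each index $i$: that $V_i$ is a double even code with $L(V_i)\simeq C_i^3$, so that $V_i$ is a representation, and that no representation of $C_i^3$ has smaller degree, so that $V_i$ is basic. The first part is a direct computation, and the second is an integer optimization over the ``Venn regions'' of a basis.

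\textbf{Verification that $V_i$ represents $C_i^3$.} Writing $v_1,v_2,v_3$ for the three listed generators, I would first check that $|v_k|\equiv 0\pmod 4$ for each $k$ and that $|v_k\cap v_l|$ is even for each pair; by the identity $|v+w|=|v|+|w|-2|v\cap w|$ this forces every codeword to have weight divisible by $4$ and every intersection to be even, so $V_i$ is a double even code. Then, using Griess's formulas $v^2=(-1)^{|v|/4}$, $[u,v]=(-1)^{|u\cap v|/2}$ and $(u,v,w)=(-1)^{|u\cap v\cap w|}$, I would read off the characteristic vector $\lambda(L(V_i))$ relative to the basis $v_1,v_2,v_3$ straight from the weights and the pairwise and triple intersection numbers. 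In each case this vector lies in the orbit $O_i^3$ of Proposition~\ref{prop3.2}, whence $L(V_i)\simeq C_i^3$ by Theorem~\ref{theorem3.1}. For example $V_1$ has all generators of weight $4$, all pairwise intersections of size $2$ and triple intersection $\{1\}$, giving $\lambda(L(V_1))=(111111)=\lambda(C_1^3)$; and $V_5$ has all generators of weight $12$, all pairwise intersections of weight $8$ and triple intersection of size $5$, giving $\lambda(L(V_5))=(111000)\in O_5^3$.

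\textbf{Minimality.} Let $V\subseteq{\bf F}_2^m$ be any reduced representation of $C_i^3$; deleting coordinates that lie in no codeword, we may assume $m=|w_1\cup w_2\cup w_3|$ for any basis $w_1,w_2,w_3$ of $V$. By the orbit correspondence underlying Proposition~\ref{prop2.9}, the characteristic vectors realized by the various bases of $V$ fill out the whole orbit $O_i^3$, so I may choose the basis whose characteristic vector is the representative $\lambda(C_i^3)$. Partitioning the support into the seven Venn regions of $w_1,w_2,w_3$, with sizes $x_1,x_2,x_3$ (lying in one generator), $x_{12},x_{13},x_{23}$ (in exactly two) and $x_{123}$ (in all three), we have $m=x_1+x_2+x_3+x_{12}+x_{13}+x_{23}+x_{123}$, and the values prescribed by $\lambda(C_i^3)$ impose: each $|w_k|=x_k+x_{kl}+x_{km}+x_{123}$ is $\equiv 4$ or $\equiv 0\pmod 8$ according as $w_k^2=-1$ or $1$; each $|w_k\cap w_l|=x_{kl}+x_{123}$ is $\equiv 2$ or $\equiv 0\pmod 4$ according as $[w_k,w_l]=-1$ or $1$; and $x_{123}$ is odd because the associator is nontrivial. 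Minimizing $m$ over nonnegative integers subject to these congruences and comparing with the exhibited $V_i$ gives the lower bounds $7,13,11,17,17$, which coincide with the degrees of $V_1,\dots,V_5$; hence each $V_i$ is basic.

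\textbf{The main obstacle} is precisely this integer program. It is tempting to restrict to symmetric configurations $x_{12}=x_{13}=x_{23}$ and $x_1=x_2=x_3$, but this overestimates the minimum: for $C_4^3$ the optimum $m=17$ is reached only with an asymmetric choice such as $x_{12}=7$, $x_{13}=x_{23}=3$ and $x_1=x_2=x_3=x_{123}=1$, whereas the symmetric best is $21$; for $C_3^3$ the optimum forces the triple block up to $x_{123}=5$ rather than $1$; and for $C_5^3$ it forces an enlarged single block $x_1=5$. In each case shrinking one block inflates others through the interlocking mod-$8$ square, mod-$4$ commutator and parity conditions, so the true minimum is not attained at the naive configuration. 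I would therefore carry out the optimization by taking the residue of $x_{123}$ modulo $4$ as the main case split, solving the resulting linear congruences for the remaining blocks, and checking the finitely many residue patterns. The reduced hypothesis $|X|<8$ plays no role in the lower bound, since it can only shrink the feasible set, but it guarantees that the optimal configurations actually occur among reduced representations, so that the degrees realized by $V_1,\dots,V_5$ are genuinely minimal.
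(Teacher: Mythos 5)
Your proposal is correct and follows essentially the paper's own route: the representation half is the same characteristic-vector-and-orbit computation, and the minimality half rests on exactly the congruences the paper exploits ($|w_k|\equiv 0$ or $4 \pmod 8$, $|w_k\cap w_l|\equiv 0$ or $2 \pmod 4$, odd triple intersection) applied to a basis realizing a representative of $O_i^3$, with your optimal configurations ($x_{123}=5$ for $C_3^3$, the asymmetric $7$-block for $C_4^3$, $x_1=5$ for $C_5^3$) reproducing the paper's case analysis and the degrees $7,13,11,17,17$ checking out against the exhibited $V_i$. The differences are only organizational: you package the counting as one integer program over the seven Venn regions using the canonical representatives of Theorem \ref{theorem3.1} (legitimate, since the program's minimum is constant on an orbit), whereas the paper argues case by case with convenient representatives such as $(111110)$ and $(111000)$ and, along the way, also pins down each basic representation uniquely up to isomorphism --- a stronger conclusion than the theorem states, which your optimization alone does not deliver but which the statement does not require.
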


\begin{proof}
We consider the ${\bf F}_{2}$-subspaces $V_{1},\dots,V_{5}$ of ${{\bf F}_{2}^{7}},{{\bf F}_{2}^{13}},{{\bf F}_{2}^{11}},{{\bf F}_{2}^{17}},{{\bf F}_{2}^{17}},$ respectively, as above. First we will see that each space $V_{i}$  is a representation of $C_{i}^{3}$, that is, is a double even code of ${{\bf F}_{2}^{m}}$, for some $m$ and that $C_{i}^{3} \simeq L(V_{i})$, $i=1,\dots,5$. 

Since the elements of $V_{5}$ are $v_{0}=0$, $v_{1}=(12345678910,11,12)$, \linebreak  $v_{2}=(12345678,13,14,15,16)$, $v_{3}=(12345,9,10,11,13,14,15,17)$, $v_{4}=v_{1}+v_{2}=(9-16)$, $v_{5}=v_{1}+v_{3}=(678,12,13,14,15,17)$, $v_{6}=v_{2}+v_{3}=(6789,10,11,16,17)$ and $v_{7}=v_{1}+v_{2}+v_{3}=(12345,12,16,17)$, we see, clearly, that all the vectors have weight with multiplicity  $4$ and the weight of the intersection of each pair of vectors is even. Thus, $V_{5}$ is a double even code. Analogously, we prove that $V_{i}$, $i=1,\dots,4$ is a double even code.

Now, the isomorphism $C_{i}^{3}\simeq L(V_{i})$ follows directly from Theorem \ref{theorem3.1} of Classification of Code Loops of rank $3$ and from Proposition \ref{prop3.2}. We just need to calculate the characteristic vector associated to $L(V_{i})$ and note that it belongs to the orbit $O_{i}^{3}$ corresponding to the code loop $C_{i}^{3}$.
 
As example, we calculate the characteristic vector associate to $L(V_{5})=\{1,-1\}\times V_{5}$. We have $ v_{i}^{2}=(-1)^{\frac{|v_{i}|}{4}}=-1$ and $ \left[v_{i},v_{j}\right]=(-1)^{\frac{|v_{i}\cap v_{j}|}{2}}=1,$ for $i,j=1,2,3$. Hence $\lambda(L(V_{5}))=(111000) \in O_{5}^{3}$. Therefore, $C_{5}^{3}\simeq L(V_{5}).$

We will demonstrate now that each $V_{i}$, $i=1,\dots,5$, up to isomorphism, is the unique basic representation of the code loop $C_{i}^{3}$. We consider $X=(a,b,c)$ a set of generators for $C_{i}^{3}$ such that $\lambda = \lambda_{X}(C_{i}^{3})$ is the corresponding characteristic vector, and we assume that $V$ is a basic representation of $C_{i}^{3}$, where $v,w,u$ are the elements of the basis of $V$ which corresponds to $a,b,c$. We use the notation $t=|v \cap w\cap u|$. We remember that $t \equiv 1(mod\;2)$, because $(a,b,c)=-1$.

{\itshape Case} $i=1$: In this case, we are assuming $gr \;V \leq 7$. The characteristic vector is $\lambda = (111111)$, then $|v|\equiv |w|\equiv |u|\equiv 4 (mod8)$. We suppose that $v=(1234)$, but $[a,b]=-1$, then $|v \cap w| \equiv 2 (mod 4)$, and hence, $|v\cap w|=2$. Analogously, we obtain $|v\cap u|=2$. Let $w=(1256)$, hence we also obtain $|w\cap u|=2$ and then, $t=1$. Hence, $u=(1357)$. Therefore, $V=V_{1}$.

{\itshape Case} $i=2$: The characteristic vector is $\lambda = (000000)$, then $|v|\equiv |w|\equiv |u|\equiv 0 (mod8)$. Since $gr\; V \leq 13$, then $|v|=8$. Analogously, $|w|=|u|=8$. Let $v=(12345678)$. As $|v \cap w| \equiv 0 (mod 4)$, hence $|v\cap w|=4$. Analogously, $|v\cap u|=|w\cap u|=4$.

Let $w=(12349,10,11,12)$. We have two possibilities for $t$: $t=1$ or $t=~3$. Case $t=3$, we will have $gr\; V = 15$, a contradiction. Hence, $t=1$ and $u=(15679,10,11,13)$. Therefore, $V=V_{2}$.

{\itshape Case} $i=3$: We consider $\lambda = (000111)$ and $gr\; V \leq 11$. Analogously to the previous case, we have $|v|=|w|=|u|=8$. In this case, we have $|v\cap w|=|v\cap u|=|w\cap u|=6$. We suppose $v=(12345678)$ and $w=(1234569,10)$. We have three possibilities for $t$: $t=1,3$ or $5$. Case $t\leq 3$ we will have $|v|> 8$, a contradiction. Hence, $t=5$ and we can assume $u=(1234579,11)$. Therefore, $V=V_{3}$.

{\itshape Case} $i=4$: In this case, $\lambda = (111110)$. Since $|v|\equiv |w|\equiv |u|\equiv 4 (mod8)$ and $|w\cap u|\equiv 0 (mod 4)$ then $|w|=|u|=12$ and $|w\cap u|=8$. Let $v=(1234)$, so $|v\cap w|=2$. If $|v\cap w|=4$, then $[v,w]=(-1)^{\frac{|v\cap w|}{2}}=1,$ which does not occur since $[a,b]=-1$.

Let $w=(1256789,10,11,12,13,14)$.  Since $|v\cap w|=2,$ then $t=1$. Hence\linebreak $u=(1356789,10,11,15,16,17)$. Therefore, $V = V_{4}$. 

Now, we suppose $v=(123456789,10,11,12)$. If $|v\cap w| \leq 6$ then $gr\;V \geq 18.$ In fact, we have $|v\cap w|=2,6$ or $10$. If $|v\cap w|=2$ we will have $|v+w|=20$, a contradiction. If $|v\cap w|=6$ we will have $|v+w|=12$ and hence, $gr\; V \;\geq 18$, a contradiction. Therefore, $|v\cap w| =10$. Analogously we obtain $|v\cap u|=10$. Without loss of generality, we suppose $w=(1256789,10,11,12,13,14)$. Since $t\equiv 1(mod 2)$, then the possibilities for $t$ are $1,3,4$ or $7$. In any case we will have $|v| \geq 13,$ which is a contradiction. Then, there is not this last possibility for $v$.

{\itshape Case} $i=5$: The characteristic vector is $\lambda = (111000)$, so we have $|v|\equiv |w|\equiv |u|\equiv 4 (mod 8)$.  Assuming $v=(1234)$ and since $[a,b]=1$, we will have $|v \cap w| \equiv 0 (mod 4)$. Case $v\cap w \neq \emptyset$ we will have $|v\cap w|=4$, and hence $v \subset w$, which is a contradiction, because this give us $t \equiv 0 (mod\;2)$, which does not occur. The case $v\cap w = \emptyset$ also does not occur, since we must have nonempty intersection between $v, w$ and $u$.

Therefore $|v|\geq 12$. Analogously, we prove that $|w|\geq 12$ and $|u|\geq 12$.

The representation $V$ is basic and $gr\; V \leq 17$, so we obtain $|v|=|w|=|u|=12$. Without loss of generality, we suppose $v=(123456789,10,11,12)$ and $|w\cap v| = 4$ or $8$.  If $|v\cap w|=4$, then $|v+w|=16$ and hence, $gr \;V \geq |v\cap w|+|v+w|=20,$ which is a contradiction.

Therefore, $v\cap w = (12345678)$ and $w=(12345678,13,14,15,16).$ Analogously, we have $|v\cap u|=|w\cap u|=8.$

If $t\leq 3$, then $|v|\geq |v\cap w\cap u|+|(v\cap w)\backslash
(v\cap w\cap u)|+|(v\cap u)\backslash
(v\cap w\cap u)|\geq 3+5+5=13$, a contradiction. Therefore, $t\geq 5.$

If $t=7$, then $gr\; V \geq 19$. To prove this, we consider $u=(i_{1},\dots,i_{12})$, where \linebreak $i_{1},\dots,i_{7} \in v\cap w\cap u$, but since $v\cap w = (12345678)$ then $i_{1},\dots,i_{7} \in v\cap w$. Suppose $u=(1234567,i_{8},\dots,i_{12})$. Since $|u\cap v|=8$ and $|u\cap w|=8$, so $i_{8} \in \left\{9,10,11,12\right\}$ whereas $i_{9} \in \left\{13,14,15,16\right\}$. We choose $i_{8}=10$ and $i_{9}=13.$ Hence $i_{j} \notin \left\{9,11,12,14,15,16\right\}$, $j=10,11,12$. Hence, a possibility for $u$ it will be $u=(1234567,10,13,17,18,19)$ so that $gr\; V \geq 19$, a contradiction with the fact $V$ be basic representation. Thus, $t=5$ and  $u=~(123459,10,11,13,14,15,17)$. Therefore, $V=V_{5}$.

\end{proof}

\begin{corollary}
Each basic representation of the code loops $C_{1}^{3},\dots,C_{5}^{3}$ has the following types, respectively:
$(1111111),(1111333),(1111115),(1111337),(1113335).$
 \end{corollary}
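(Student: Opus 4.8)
The plan is to read off each type directly from the generator matrices of the basic representations $V_1,\dots,V_5$ produced in Theorem \ref{theorem4.4}, using the characterization of $\sim$ in terms of a fixed basis. Recall from the remark following the definition of $\sim$ that for two coordinates $i,j\in I_m$ one has $i\sim j$ if and only if $\{i,j\}\cap v_k\in\{\{i,j\},\emptyset\}$ for every member $v_k$ of a basis $\{v_1,\dots,v_s\}$ of $V$. Equivalently, forming the $s\times m$ matrix whose $k$-th row is the indicator vector of $v_k$, one has $i\sim j$ exactly when columns $i$ and $j$ of this matrix coincide. Thus the equivalence classes are precisely the sets of coordinates sharing a common membership pattern $(\chi_{v_1}(i),\dots,\chi_{v_s}(i))\in\mathbf{F}_2^{s}$, and the type of $V$ is obtained by listing, in nondecreasing order, the number of coordinates realizing each pattern that occurs.

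First I would fix, for each $i=1,\dots,5$, the three basis vectors $v,w,u$ of $V_i$ listed in Theorem \ref{theorem4.4} and tabulate, for every coordinate, which of $v,w,u$ contain it; since $s=3$ there are at most $2^{3}-1=7$ nonzero patterns, and in each $V_i$ every coordinate lies in at least one basis vector, so all seven patterns are realized and each type has exactly seven entries (a convenient check, as the entry sums $7,13,11,17,17$ must reproduce the degrees). For $V_1$ the seven coordinates realize seven distinct patterns, giving seven singleton classes and type $(1111111)$. For $V_2$ the coordinates group as $\{1\},\{2,3,4\},\{5,6,7\},\{8\},\{9,10,11\},\{12\},\{13\}$, with sizes $1,3,3,1,3,1,1$ and hence type $(1111333)$. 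For $V_3$ the five coordinates common to all three basis vectors merge into a single class, the remaining six being singletons, giving type $(1111115)$. For $V_4$ the seven coordinates lying in $w\cap u$ but not in $v$ form a class of size $7$, alongside the two classes $\{12,13,14\}$ and $\{15,16,17\}$ of size $3$ and four singletons, giving type $(1111337)$. For $V_5$ the partition into patterns yields one class of size $5$, three of size $3$, and three of size $1$, giving type $(1113335)$.

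The verification is entirely a bookkeeping computation, so there is no genuine obstacle beyond organizing the five membership tables carefully; the one point meriting attention is that $\sim$ is \emph{a priori} quantified over all codewords, and I must invoke the basis-reduction remark to legitimize computing it from $v,w,u$ alone rather than from all eight codewords of each $V_i$. Once that reduction is in hand, each type follows by inspection: I would display the five tables compactly, read off the class sizes, and sort each list into nondecreasing order to match the stated vectors.
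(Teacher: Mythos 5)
Your proposal is correct and is essentially the paper's own (implicit) argument: the corollary is left as a direct read-off from the generators in Theorem~\ref{theorem4.4}, and you carry out exactly that computation, legitimately using the paper's own remark that $\sim$ can be tested on a basis $\{v,w,u\}$ rather than on all codewords. All five membership tables and the resulting class sizes $(1111111),(1111333),(1111115),(1111337),(1113335)$ check out, including the degree sums $7,13,11,17,17$.
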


According with the Theorem \ref{theorem3.5}, we have exactly $16$ code loops of rank $4$, namely,  $C_{1}^{4},C_{2}^{4},\dots,$ \linebreak$C_{16}^{4}.$ For each $C_{i}^{4}$, $i=1,\dots,16,$ we have to find $V_{i} \subseteq {{\bf F}_{2}^{m}}$ double even code of minimal degree $m$ such that $V_{i} \cong L(C_{i}^{4}).$
 
In general, the set $X=\left\{a,b,c,d\right\}$ represents a set of generates of $C_{i}^{4}$ such that $\lambda_{X}(C_{i}^{4})=\lambda(C_{i}^{4})$ is its corresponding characteristic vector. We also suppose that  $V_{i}=ger\left\{v_{1},v_{2},v_{3},v_{4}\right\}$ is a basic representation of $C_{i}^{4}$, where $v_{1},v_{2},v_{3},v_{4}$ corresponds to $a,b,c,d$ respectively.
 
 For the next theorem we use the notation: $t_{ijk}=|v_{i}\cap v_{j}\cap v_{k}|, i,j,k = 1,..,4$ and $t_{1234}=|v_{1}\cap v_{2}\cap v_{3}\cap v_{4}|.$
 
\begin{theorem}\label{theorem4.6}
Each code loop $C_{1}^{4},\dots,C_{16}^{4}$ has the following set of generators to its basic representation $V_{1},\dots,V_{16}$, respectively:\\
$V_{1}=\left\langle (1234),(1256),(1357),(1-8)\right\rangle,$\\
$V_{2}=\left\langle (1-8),(1-4,9-12),(15679,10,11,13),(12589,12,13,14)\right\rangle,$\\
$V_{3}=\left\langle (1-8),(1-6,9,10),(1-5,7,9,11),(1,6-12)\right\rangle,$\\
$V_{4}=\left\langle (1-8),(1-6,9,10),(12379,11-17),(1478,9-11,18)\right\rangle,$\\
$V_{5}=\left\langle (1-8),(1234,9-12),(159,13-17),(12569,10,13,18)\right\rangle,$\\
$V_{6}=\left\langle (1234),(1256),(1357),(8,9,10,11)\right\rangle,$\\
$V_{7}=\left\langle (1-8),(1234,9-12),(15679,10,11,13),(14,15,16,17)\right\rangle,$\\
$V_{8}=\left\langle (1-8),(1234,9-12),(12359,13,14,15),(1,2,10,11,13,14,16,17)\right\rangle,$\\
$V_{9}=\left\langle (1-8),(1234,9-16),(15679,10,11,17),(5,6,9,10,12,13,18,19)\right\rangle,$\\
$V_{10}=\left\langle (1-8),(1,2,9-14),(139,10,11,15-17),(4,5,18,19)\right\rangle,$\\
$V_{11}=\left\langle (1-8),(1-4,9-12),(12359,13-15),(6,7,16,17)\right\rangle,$\\
$V_{12}=\left\langle (1-8),(1-4,9-12),(1235,9-11,13),(129,10,14-17)\right\rangle,$\\
$V_{13}=\left\langle (1-8),(129,10),(139,11),(45,12-17)\right\rangle,$\\
$V_{14}=\left\langle (1-8),(1-4,9-12),(15679,13-15),(2,3,10,11)\right\rangle,$\\
$V_{15}=\left\langle (1-12),(1-4,13-16),(1235,13-17),(1-6,13-18)\right\rangle,$\\
$V_{16}=\left\langle (1-8),(1,2,9-14),(1,3-7,9-13,15-19),(1-5,8,9,14-17,20)\right\rangle.$
\end{theorem}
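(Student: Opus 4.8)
The plan is to follow the two-stage template of Theorem \ref{theorem4.4}, now for the sixteen loops of rank $4$: first exhibit that each listed $V_i$ is genuinely a representation of $C_i^4$, and then prove that its degree cannot be lowered and that the configuration is forced up to a permutation of coordinates. Throughout I translate the characteristic vector $\lambda(C_i^4)=(\lambda_1,\dots,\lambda_{10})$ into combinatorial constraints on the four basis words $v_1,v_2,v_3,v_4$ associated to $a,b,c,d$. Recalling from the Introduction that $v^{2}=(-1)^{|v|/4}$, $[u,v]=(-1)^{|u\cap v|/2}$ and $(u,v,w)=(-1)^{|u\cap v\cap w|}$, the squares give $|v_i|\equiv 4\lambda_i\pmod 8$, the commutators give $|v_i\cap v_j|\equiv 2\lambda_{ij}\pmod 4$, and the associators give $t_{123}\equiv 1\pmod 2$ together with $t_{124}\equiv t_{134}\equiv t_{234}\equiv 0\pmod 2$, since by Theorem \ref{theorem3.5} we may assume $N(L)=\mathbf{F}_2 d$ and hence $(a,b,c)=-1$ is the only nontrivial associator.

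For the verification stage I would, for each $i$, read off the weights $|v_i|$, the pairwise intersection sizes $|v_i\cap v_j|$ and the triple and quadruple intersections from the explicit generators, check that every $|v_i|\equiv 0\pmod 4$ and every $|v_i\cap v_j|\equiv 0 \pmod 2$ (so that $V_i$ is a double even code), and then compute the characteristic vector of $L(V_i)$ directly from these same formulas. It then suffices, exactly as in the rank-$3$ argument, to confirm that this vector lies in the orbit $O_i^4$ recorded in Theorem \ref{theorem3.5}; by the Corollary classifying loops by $G_n$-orbits this identifies $L(V_i)\cong C_i^4$. For the five loops $C_1^4,\dots,C_5^4\cong \mathbb{Z}_2\times C_i^3$ and for $C_6^4,C_7^4\cong \mathbb{Z}_4\ast C_i^3$ one can shorten this by reusing the rank-$3$ representations of Theorem \ref{theorem4.4}: the first three words reproduce a basic representation of the rank-$3$ factor, while $v_4$ is chosen of weight $8$ (for a $\mathbb{Z}_2$ factor, $d^{2}=1$) or weight $4$ (for a $\mathbb{Z}_4$ factor, $d^{2}=-1$) with the prescribed intersections encoding $[d,a],[d,b],[d,c]$.

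For the minimality stage the main tool is the inclusion--exclusion identity
$$gr\,V=|v_1\cup v_2\cup v_3\cup v_4|=\sum_{i}|v_i|-\sum_{i<j}|v_i\cap v_j|+\sum_{i<j<k}t_{ijk}-t_{1234},$$
valid for a reduced basic representation in which no coordinate is superfluous (the support of any $v_i\triangle v_j$ lies in the union of basis supports, so that union equals the ground set). For each $i$ I would first pin down the smallest admissible weights $|v_i|$ and pairwise intersections compatible with the congruences above, then show that the triple intersections are forced: $t_{123}$ is odd and must be taken as large as the pairwise constraints permit so as not to inflate the individual weights, while $t_{124},t_{134},t_{234}$ and $t_{1234}$ are squeezed between the even-parity requirement and the bound $t_{1234}\le \min_{i<j<k}t_{ijk}$. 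Substituting the extremal values into the identity yields the claimed degree, and tracking the forced overlaps reconstructs the words up to relabelling, i.e. up to an element of $S_m$. This is precisely the mechanism already illustrated in the case $i=5$ of Theorem \ref{theorem4.4}, where ruling out $t=7$ and $t\le 3$ left $t=5$ as the only value consistent with minimality.

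The hard part will be the lower-bound bookkeeping in the sixteen cases, especially those of largest degree ($C_9^4,C_{10}^4,C_{16}^4$, with degrees $19$ and $20$), where several triple intersections and the quadruple intersection interact: there one must argue that any decrease of a triple intersection below the stated value forces either a parity violation or an increase of some $|v_i|$, hence of $gr\,V$, beyond the target. The uniqueness claim then requires showing that once all the $|v_i|$, $|v_i\cap v_j|$, $t_{ijk}$ and $t_{1234}$ are fixed, the incidence pattern of the four supports is rigid, so that any two realizations differ only by a bijection of the ground set $I_m$; this is the content of the equalities ``$V=V_i$'' in each case and, being a finite check region by region of the Venn diagram of $v_1,\dots,v_4$, is routine once the intersection numbers are nailed down.
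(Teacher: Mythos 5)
Your proposal is correct and follows essentially the same strategy as the paper's proof: first verify that each $V_{i}$ is a double even code and compute the characteristic vector of $L(V_{i})$ to place it in the orbit $O_{i}^{4}$ of Theorem \ref{theorem3.5}, then establish minimality and uniqueness up to a permutation of coordinates by translating $\lambda(C_{i}^{4})$ into congruences on the weights $|v_{i}|$, the pairwise intersections $|v_{i}\cap v_{j}|$, and the parities of $t_{ijk}$ and $t_{1234}$, and running a finite case analysis that rules out every other intersection pattern as either violating parity or inflating some $|v_{i}|$ and hence the degree. Your explicit inclusion--exclusion identity and the congruence dictionary merely make precise the bookkeeping the paper performs implicitly in its representative cases ($C_{1}^{4}$, $C_{7}^{4}$, $C_{10}^{4}$, with the remaining thirteen declared analogous), so this is the same argument, organized slightly more systematically.
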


\begin{proof}
Analogously to the Theorem  \ref{theorem4.4} (Representations of the code loops of rank 3), we demonstrate that each $V_{i}$, $i=1,\dots,16$, is a double even code. Now, to prove that $V_{i}\simeq L(C_{i}^{4})$ we just need to find the characteristic vector associated to $L(C_{i}^{4})$ and apply the Theorem \ref{theorem3.5} (Classification of code loop of rank 4). Therefore, $V_{i}$ is a representation of  $C_{i}^{4}$, $i=1,\dots,16$.

Now, we are going to prove, up to isomorphism, that $V_{1}$ is the unique basic representation of $C_{1}^{4}$. We consider $(a,b,c,d)$ a set of generators of $C_{1}^{4}$ such that $\lambda=\lambda(C_{1}^{4})=(1110110100)$. We suppose that $V=ger\{v_{1},v_{2},v_{3},v_{4}\}$ is a basic representation of $C_{1}^{4}$, where $v_{1},v_{2},v_{3},v_{4}$ corresponds to $a,b,c,d$, respectively. Hence, $gr \;V \leq 8$.

In this case, we have 
\begin{eqnarray*}
|v_{1}|&\equiv & |v_{2}|\equiv |v_{3}|\equiv 4(mod\;8)\\
|v_{4}|&\equiv & 0(mod\;8)\\
|v_{1}\cap v_{2}|&\equiv & |v_{1}\cap v_{3}|\equiv |v_{2}\cap v_{3}|\equiv 2 (mod\;4)\\
|v_{1}\cap v_{4}|&\equiv & |v_{2}\cap v_{4}|\equiv |v_{3}\cap v_{4}|\equiv 0 (mod\;4)
\end{eqnarray*}

Suppose $v_{1}=(1234)$, then $|v_{1}\cap v_{2}| = |v_{1}\cap v_{3}|=2$ and hence, $t_{123}=1$. Then we can assume $v_{2}=(1256)$ and $v_{3}=(1357)$.

We will analyse two possible cases for values of $t_{1234}$: $0$ and $1$.

We will write whenever necessary $t_{ij4}$, for $|v_{i}\cap v_{j}\cap v_{4}|$ with  $i,j=1,2,3$, $i\neq j$ and $t_{1234}$ for $|v_{1}\cap v_{2}\cap v_{3}\cap v_{4}|$.

Case $t_{1234}=0$, we have $t_{ij4}=0$ and thus, $|v_{i}\cap v_{4}|=0$.  Therefore, $|v_{4}|\geq 8$ and then, we don't have basic reduced representation in this case. Case $t_{1234}=1$, we have $t_{ij4}=2$ and thus, $|v_{i}\cap v_{4}|=4$, that is, $v_{i}\subset v_{4}$, $i=1,2,3$. Next, $v_{4}=(12345678)$. Therefore, $V=V_{1}$. 

If $v_{1}=(123456789,10,11,12)$ we have $gr\; V > 12$, which contradicts the hypothesis of $V$ to be basic.

Now, we will prove that $V_{7}$, up to isomorphism, is the unique basic representation of $C_{7}^{4}$. Here the characteristic vector is given by $\lambda=(0001000000)$. We suppose that $V=ger\{v_{1},v_{2},v_{3},v_{4}\}$ is a basic representation of $C_{7}^{4}$. Then:
\begin{eqnarray*}
|v_{1}|&\equiv & |v_{2}|\equiv |v_{3}|\equiv 0(mod\;8)\\
|v_{4}|&\equiv & 4(mod\;8)\\
|v_{1}\cap v_{2}|&\equiv & |v_{1}\cap v_{3}|\equiv |v_{2}\cap v_{3}|\equiv 0 (mod\;4)\\
|v_{1}\cap v_{4}|&\equiv & |v_{2}\cap v_{4}|\equiv |v_{3}\cap v_{4}|\equiv 0 (mod\;4)
\end{eqnarray*}

Let $v_{1}=(12345678)$, so $|v_{1}\cap v_{2}|=|v_{1}\cap v_{3}|=4$ and hence, $t_{123}=1$ or $3$. Suppose $v_{2}=(12349,10,11,12)$, so $|v_{2}\cap v_{3}|=4$. Case $t_{123}=1$, we consider $v_{3}=(1567,9,10,11,13)$. If $t_{1234}=0$, then $t_{ij4}=0$ or $2$. Considering that $|v_{i}\cap v_{j}|\equiv 0 (mod\;4)$, $i,j=1,2,3$, $i\neq j$, then we have only two subcases for analyze:
\begin{itemize}
	\item $t_{ij4}=0$: In this subcase, $|v_{i}\cap v_{4}|=0$, for $ \; i=1,2,3$ and hence, we can assume \\$v_{4}=(14,15,16,17)$. Thus, for this case, $V=V_{7}$. 
	\item $t_{ij4}=2$: Here, $|v_{i}\cap v_{4}|=4$, for $ \; i=1,2,3$ and hence,  $v_{4}=(2,3,5,6,9,10,14-~19)$, which contradicts the minimal degree of $V$.
\end{itemize}

If $t_{1234}=1$, then $t_{ij4}=2$ or $4$. Analogously, we have two subcases for analyze:
\begin{itemize}
	\item $t_{ij4}=2$: In this subcase, $|v_{i}\cap v_{4}|=4$, for $ \; i=1,2,3$, which give us $gr\; V > 17$, a contradiction.
	\item $t_{ij4}=4$: In this subcase, $|v_{i}\cap v_{4}|=8$, for $ \; i=1,2,3$, which also contradicts the minimal degree of $V$.
\end{itemize}

Now, analyzing the case $t_{123}=3$, we suppose that $v_{3}=(12359,13,14,15)$. If $t_{1234}=0,$ then $|v_{i}\cap v_{4}|=0$ and hence, we will have $v_{4}=(16,17,18,19)$, which contradicts the degree of $V$ to be minimal. Analogously, for the cases $t_{1234}=1,2$ and $4$, we will have $gr\; V > 17$. 

Now, let the code loop $C_{10}^{4}$ with $\lambda=(0001111000)$ and $V=ger\{v_{1},v_{2},v_{3},v_{4}\}$ its basic representation. Let $v_{1}=(12345678)$, then $|v_{1}\cap v_{2}|=2$ or $6$ and $|v_{1}\cap v_{3}|=2$ or $6$. Case $|v_{1}\cap v_{2}|=2$, we can assume $v_{2}=(1,2,9-14)$. Hence, we have $t_{123}=1$ and $|v_{2}\cap v_{3}|=4$. 
\begin{itemize}
\item For $|v_{1}\cap v_{3}|=2$, consider $v_{3}=(1,3,9,10,11,15,16,17)$. If $t_{1234}=0$, then $t_{124}=t_{134}=0$ and $t_{234}=0$ or $2$. Thus $|v_{1}\cap v_{4}|=2$. If $t_{234}=0$: $|v_{2}\cap v_{4}|=0$ and $|v_{3}\cap v_{4}|=0$. Thus, we have $v_{4}=(4,5,18,19)$ and, therefore, $V=V_{10}$. In the case $t_{234}=2$ we will find $v_{4}=(4,5,9,10,12,13,15,16,18-21)$, contradicting the minimality of the degree of $V$. The analyze of $t_{1234}=1$ is analogous.

\item For $|v_{1}\cap v_{3}|=6$, we can consider $v_{3}=(1,3-7,9-11,15-21)$. In this case $gr\; V > 19$ for any analyze.

\end{itemize}

We don't have basic representation in case $|v_{1}\cap v_{2}|=6$. 

Analogously, in the other cases, we prove that each $V_{i}$ is the unique basic representation, up to isomorphism.

\end{proof}

\newpage
\begin{corollary}Each basic representation of the code loops $C_{1}^{4},\dots,C_{16}^{4}$ has the following degree and type, respectively:
\vspace{3mm}

\begin{center}{
\begin{tabular}{l|l|l|l|l|l}
\multicolumn{1}{c|}{$i$} & \multicolumn{1}{c|}{$deg\;V_{i}$} & \multicolumn{1}{c|}{type of $V_{i}$}&\multicolumn{1}{c|}{$i$} & \multicolumn{1}{c|}{$deg\;V_{i}$} & \multicolumn{1}{c}{type of $V_{i}$} \\ 
\hline
\multicolumn{1}{c|}{1} & \multicolumn{1}{c|}{8} & (11111111) & 
\multicolumn{1}{c|}{2} & \multicolumn{1}{c|}{14} & (11111111222) \\ 
\multicolumn{1}{c|}{3} & \multicolumn{1}{c|}{12} & (111111114) &
\multicolumn{1}{c|}{4} & \multicolumn{1}{c|}{18} & (11111111226) \\ 
\multicolumn{1}{c|}{5} & \multicolumn{1}{c|}{18} & (111111112224) &
\multicolumn{1}{c|}{6} & \multicolumn{1}{c|}{11} & (11111114) \\ 
\multicolumn{1}{c|}{7} & \multicolumn{1}{c|}{17} & (11113334) &
\multicolumn{1}{c|}{8} & \multicolumn{1}{c|}{17} & (11111122223) \\ 
\multicolumn{1}{c|}{9} & \multicolumn{1}{c|}{19} & (11111222233) &
\multicolumn{1}{c|}{10} & \multicolumn{1}{c|}{19} & (111223333) \\ 
\multicolumn{1}{c|}{11} & \multicolumn{1}{c|}{17} & (111122333) & 
\multicolumn{1}{c|}{12} & \multicolumn{1}{c|}{17} & (1111112234) \\ 
\multicolumn{1}{c|}{13} & \multicolumn{1}{c|}{17} & (111111236) &
\multicolumn{1}{c|}{14} & \multicolumn{1}{c|}{15} & (111112233) \\ 
\multicolumn{1}{c|}{15} & \multicolumn{1}{c|}{18} & (111111336) &
\multicolumn{1}{c|}{16} & \multicolumn{1}{c|}{20} & (11111122334) \\ 
\end{tabular}}
\end{center}
\end{corollary}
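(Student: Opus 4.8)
The plan is to read off both invariants directly from the explicit generators produced in Theorem~\ref{theorem4.6}, since that theorem already guarantees that each $V_i$ is, up to isomorphism, the basic representation of $C_i^4$. The degree $\deg V_i = m$ is simply the largest coordinate occurring among the four generators $v_1,v_2,v_3,v_4$ of $V_i$ (in a reduced basic representation no coordinate lies outside every $v_k$, so $m$ equals the number of coordinates actually used). This is immediate from each line of Theorem~\ref{theorem4.6} and yields the middle column of the table. The only substantive computation is the type.

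For the type I would use the basis reformulation of the equivalence relation $\sim$ recorded right after its definition: two coordinates $i,j$ satisfy $i\sim j$ if and only if, for every basis vector $v_k$, either both $i,j\in v_k$ or both $i,j\notin v_k$. Equivalently, assign to each coordinate $i\in\{1,\dots,m\}$ its membership vector $\chi(i)=(\epsilon_1,\epsilon_2,\epsilon_3,\epsilon_4)\in{\bf F}_2^4$, where $\epsilon_k=1$ exactly when $i\in v_k$. Then $i\sim j$ if and only if $\chi(i)=\chi(j)$, so the equivalence classes are precisely the nonempty fibres $\chi^{-1}(S)$, $S\in{\bf F}_2^4$, and the type of $V_i$ is the multiset of fibre sizes written in nondecreasing order. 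Because a permutation $\varphi\in S_m$ carrying $V_i$ to an isomorphic even code permutes coordinates while preserving membership patterns, the type is an isomorphism invariant and does not depend on the chosen representative $v_1,\dots,v_4$.

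With this dictionary the proof reduces to sixteen finite bookkeeping computations: for each $V_i$ I would tabulate $\chi(1),\dots,\chi(m)$, group the coordinates sharing a common membership vector, and record the fibre sizes. For example, in $V_1=\langle\{1,2,3,4\},\{1,2,5,6\},\{1,3,5,7\},\{1,\dots,8\}\rangle$ the eight coordinates receive eight distinct membership vectors, giving type $(11111111)$; in $V_6$ the four coordinates $8,9,10,11$ share the pattern lying in $v_4$ alone while $1,\dots,7$ are pairwise inequivalent, giving $(11111114)$; and in $V_3$ the coordinates $2,3,4,5$ collapse to a single class of size $4$ with the remaining eight coordinates singletons, giving $(111111114)$. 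Carrying out the same tally for the other thirteen codes produces the remaining entries of the table.

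There is no real conceptual obstacle here; the computation is entirely routine and the main risk is purely clerical. Two points genuinely deserve care. First, one must confirm in each case that every fibre of $\chi$ has size strictly less than $8$, so that each $V_i$ is a \emph{reduced} representation in the sense required for the type to be meaningful; this is immediate from the resulting type vectors, whose largest entry is $6$. Second, the relation $\sim$ is defined over all of $V$, not merely over the basis: but membership of a coordinate in an arbitrary $v=\sum_{k\in S}v_k$ is determined coordinatewise by the parities of the $\epsilon_k$, so agreement on the basis forces agreement on all of $V$. This is exactly the equivalence remarked upon after the definition, and it is what legitimizes computing $\chi$ from the four generators alone.
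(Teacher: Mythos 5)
Your proposal is correct and coincides with the paper's own (implicit) argument: the corollary is stated without proof, being treated as a direct computation from the generators in Theorem~\ref{theorem4.6}, and your membership-vector formalization of $\sim$ via the basis criterion, together with the observation that a basic representation uses every coordinate, is exactly that computation. One clerical caveat: as printed, the third generator of $V_{15}$ has odd weight $9$ (impossible for a double even code), and your tally reproduces the stated type $(111111336)$ only after correcting it to $(1,2,3,5,14,15,16,17)$ --- precisely the kind of slip your method is well suited to detect.
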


Note that in the case of code loops of rank 3 and 4 the type of code loop define this loop up to isomophism. May be it is true in general case.

\begin{conjecture}
Let $V_{1}$ and $V_{2}$ be representations of a code loop $L$. If this representations have the same degree and type, then $V_{1}$ and $V_{2}$ are isomorphic. 
\end{conjecture}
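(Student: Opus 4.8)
The plan is to translate the statement into a question about weighted point configurations in $\mathbf{F}_{2}^{n}$ and their behaviour under $GL_{n}(2)$. Fix a basis $v_{1},\dots,v_{n}$ of a reduced representation $V\subseteq \mathbf{F}_{2}^{m}$ of $L$, where $n$ is the rank of $L$. To each coordinate $i\in I_{m}$ attach its \emph{signature} $s_{i}=((v_{1})_{i},\dots,(v_{n})_{i})\in \mathbf{F}_{2}^{n}$, the $i$-th column of the generator matrix; two coordinates are equivalent in the sense used to define the type exactly when they carry the same signature, so an equivalence class is the fibre of a single signature and $V$ is encoded by the weight function $f\colon \mathbf{F}_{2}^{n}\setminus\{0\}\to \mathbb{Z}_{\ge 0}$, $f(s)=|\{i:s_{i}=s\}|$. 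A change of basis $g\in GL_{n}(2)$ sends $s_{i}\mapsto g\,s_{i}$, hence $f\mapsto f\circ g^{-1}$, while a coordinate permutation preserving $V$ leaves $f$ unchanged once a compatible basis is chosen. Consequently two representations of $L$ of rank $n$ are isomorphic as even codes \emph{if and only if} their weight functions lie in one $GL_{n}(2)$-orbit, the degree is $\sum_{s}f(s)$, and the type is precisely the sorted multiset of positive values of $f$.

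Next I would record how the remaining data are functions of $f$. For $J\subseteq\{1,\dots,n\}$ one has the zeta-transform $|\bigcap_{j\in J}v_{j}|=\sum_{s\supseteq J}f(s)$, and more generally $|\sum_{j\in J}v_{j}|=\sum_{|s\cap J|\,\mathrm{odd}}f(s)$; Möbius inversion recovers $f$ from the family of intersection numbers. The hypothesis that $V$ is double even (and reduced) is the system of congruences $|v_{j}|\equiv 0\pmod 4$, $|v_{i}\cap v_{j}|\equiv 0\pmod 2$ together with $f(s)<8$, and the characteristic vector of $L(V)$ is obtained by reading $|v_{j}|/4$, $|v_{i}\cap v_{j}|/2$ and $|v_{i}\cap v_{j}\cap v_{k}|$ modulo $2$. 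By the classification (Theorems \ref{theorem3.1} and \ref{theorem3.5}) and the bijection between $G_{n}$-orbits of characteristic vectors and isomorphism classes of code loops, the isomorphism type of $L$ is exactly the $GL_{n}(2)$-orbit of this characteristic vector. Thus the pair \textbf{(isomorphism type of $L$, type of $V$)} translates into \textbf{($GL_{n}(2)$-orbit of the mod-$8$/$4$/$2$ reductions of the zeta-transform of $f$, multiset of values of $f$)}.

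The core of the argument is then a reconstruction step. I would fix a basis realizing the standard representative $\lambda$ of the characteristic vector, so that the relevant reductions of the $|\bigcap v_{j}|$ are prescribed; the type fixes the multiset $\{f(s):f(s)>0\}$; and it remains to show that every distribution of these multiplicities over the nonzero points of $\mathbf{F}_{2}^{n}$ compatible with the congruences lies in a single $\stab(\lambda)$-orbit. For $n\le 4$ this is a finite check, and it is exactly what the explicit bases exhibited in Theorems \ref{theorem4.4} and \ref{theorem4.6} verify for the basic (minimal-degree) representations; the same bookkeeping, now allowing larger admissible multiplicities, is meant to handle all degrees, which would give the conjecture for ranks $3$ and $4$.

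The step I expect to be the genuine obstacle is precisely this last reconstruction in general rank. For $n\ge 3$ the group $GL_{n}(2)$ is a proper subgroup of the symmetric group on the $2^{n}-1$ nonzero points (it preserves collinearity in $PG(n-1,2)$), so the multiset of multiplicities alone \emph{never} determines the orbit, and one must use the incidence structure of the configuration. The code loop remembers only the parities encoded by the associator and commutator, that is, information up to triple intersections, whereas two inequivalent configurations can agree on all these parities and on the multiplicity multiset yet differ in a higher (quadruple or larger) intersection pattern that $GL_{n}(2)$ cannot absorb. Ruling this out amounts to a rigidity statement: that the mod-$8$/$4$/$2$ zeta data together with the multiplicities over-determine $f$ up to linear equivalence. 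I see no uniform reason for this beyond the finite verification available for small $n$, and it is this gap — controlling incidences of order higher than three — that keeps the statement a conjecture rather than a theorem.
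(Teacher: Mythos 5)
The statement you were asked to prove is not a theorem of the paper at all: it is stated as a \emph{conjecture}, and the authors offer no proof --- only the circumstantial evidence of Theorems \ref{theorem4.4} and \ref{theorem4.6}, which establish uniqueness of the \emph{basic} (minimal-degree) representation for each code loop of rank $3$ and $4$. So there is no argument in the paper to compare yours against, and your attempt, to its credit, does not pretend to close the question: your final paragraph concedes that the reconstruction step is unproven. Your reformulation itself is sound and is essentially a clean formalization of the paper's implicit viewpoint: encoding a representation by the multiplicity function $f$ on signatures in $\mathbf{F}_{2}^{n}\setminus\{0\}$, observing that code isomorphism corresponds to a single $GL_{n}(2)$-orbit of weight functions, that degree and type are the total mass and the multiset of values of $f$, and that the loop's characteristic vector is read off from the zeta-transform of $f$ modulo $8$, $4$ and $2$. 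This matches the machinery of Sections 2--3 (characteristic vectors, $\stab(\lambda)$-orbits), and the reduction of the conjecture to a transitivity statement --- every $f$ with prescribed value-multiset and prescribed congruence data lies in one $\stab(\lambda)$-orbit --- is the correct way to pose it.

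The genuine gap is exactly where you place it, but it is worth being precise that it is wider than your text suggests. First, the transitivity claim is the \emph{entire} content of the conjecture, not a lemma one can defer: as you note, $GL_{n}(2)$ preserves the projective incidence structure on $PG(n-1,2)$, so the multiset of multiplicities together with the mod-$8$/$4$/$2$ zeta data constrains $f$ only through intersection patterns of order at most three, while two configurations can differ in quadruple-or-higher incidence data invisible to the loop; nothing in your argument (or in the paper) rules this out. Second, even the finite verification you invoke for $n\le 4$ is not actually available in the paper: Theorems \ref{theorem4.4} and \ref{theorem4.6} treat only one degree per loop (the minimal one), whereas the conjecture quantifies over \emph{all} representations of equal degree and type --- for reduced representations of rank $4$ this is a finite but substantially larger search (each of the $15$ nonzero signatures carries a multiplicity in $\{0,\dots,7\}$, subject to the congruences), and it has not been carried out. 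So your proposal should be read as a correct translation of the conjecture into a rigidity problem about weighted point configurations in $PG(n-1,2)$, together with an accurate diagnosis of why it is open, rather than as a proof or partial proof.
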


%

\end{document}